\newtheorem{theorem}{Theorem}[section]
\newtheorem{lemma}[theorem]{Lemma}
\newtheorem{cor}[theorem]{Corollary}
\theoremstyle{definition}
\newtheorem{definition}[theorem]{Definition}
\theoremstyle{remark}
\newtheorem{remark}[theorem]{Remark}
\numberwithin{equation}{section}
\newcommand{\Q}{\mathbb{Q}}
\newcommand{\C}{\mathbb{C}}
\newcommand{\N}{\mathbb{N}}
\newcommand{\R}{\mathbb{R}}
\DeclareMathOperator{\rank}{rank}
\DeclareMathOperator{\ind}{ind}
\DeclareMathOperator{\ch}{ch}
\DeclareMathOperator{\coker}{coker}
\title{Dirac-Operators and symmetries of quasitoric manifolds}
\author{Michael Wiemeler}
\address{School of Mathematics,
    Alan Turing Building, The University of Manchester, Oxford Road, Manchester M13 9PL, UK}
\curraddr{Max-Planck-Institute for Mathematics, Vivatsgasse 7, D-53111 Bonn, Germany}
\email{wiemeler@mpim-bonn.mpg.de}
\thanks{The research was supported by SNF Grant No. PBFRP2-133466.}
\subjclass[2010]{Primary 57S15, 57S25, 58J20}
\keywords{twisted Dirac operators, \(\text{Spin}^c\)-manifolds, quasitoric manifolds, degree of symmetry}
\begin{document}
\begin{abstract}
We establish a vanishing result for indices of certain twisted Dirac operators on \(\text{Spin}^c\)-manifolds with non-abelian Lie-group actions.
We apply this result to study non-abelian symmetries of quasitoric manifolds.
We give upper bounds for the degree of symmetry of these manifolds.
\end{abstract}

\maketitle


\section{Introduction}
\label{intro}

A quasitoric manifold is a \(2n\)-dimensional manifold with a well-behaved action of an \(n\)-dimensional torus such that the orbit space is an \(n\)-dimensional simple polytope.
Quasitoric manifolds were introduced by Davis and Januszkiewicz \cite{0733.52006} as topological generalizations of non-singular projective toric varieties.

In this paper we study the degree of symmetry of quasitoric manifolds and give upper bounds in various situations.
For example we show that \(\C P^n\) is the most symmetric \(2n\)-dimensional quasitoric manifold.
Moreover, we construct infinitely many quasitoric manifolds of dimension \(2n=4k\), \(k>0\), which do not admit an action of a semi-simple compact connected Lie-group.

For a smooth manifold \(M\), the degree of symmetry \(N(M)\) of \(M\) is defined to be the maximum of the dimensions of those compact Lie-groups which act smoothly and effectively on \(M\).

Similarly one defines the semi-simple symmetry degree \(N^{ss}(M)\) of \(M\) as
\begin{align*}
  N^{ss}(M)&=\max\{\dim G;\; G \text{ compact semi-simple Lie-group, }\\
&\quad\quad G \text{ acts smoothly and effectively on } M\}
\end{align*}
and the torus symmetry degree \(T(M)\) of \(M\) to be
the maximum of the dimensions of those compact tori which act smoothly and effectively on \(M\).

It is well known that, for an \(n\)-dimensional manifold \(M\), \(N(M)\leq \frac{n(n+1)}{2}\) with equality holding if and only if \(M=S^n\) or \(M=\R P^n\).
Moreover, we have \(T(M)\leq n\) with equality holding if and only if \(M\) is a torus.
If \(\chi(M)\neq 0\), then we have \(T(M)\leq \frac{n}{2}\).

A quasitoric manifold has positive Euler-characteristic.
Therefore the torus symmetry degree of a quasitoric manifold is maximal in the class of manifolds with non-vanishing Euler-characteristic.

In this paper we show that \(\C P^n\) has maximal degree of symmetry among the quasitoric manifolds of dimension \(2n\), i.e. \(N(M)<N(\C P^n)=n^2+2n\) for all quasitoric manifolds \(M\neq \C P^n\) with \(\dim M = 2n\) (see Theorem \ref{sec:highly-symm-quas-7}).

Moreover, we generalize a vanishing result for indices of certain twisted Dirac operators on \(\text{Spin}^c\)-manifolds with \(\text{Pin}(2)\)-action found by Dessai \cite{0963.19002} to manifolds with actions of more general groups (see Theorem~\ref{sec:twist-dirac-oper-8}).
This generalization allows us to prove that if a \(2n\)-dimensional \(\text{Spin}^c\)-manifold \(M\) with \(\chi(M)\neq 0\) admits such a twisted Dirac operator with non-vanishing index then its degree of symmetry is bounded from above by \(3n\) with equality holding if and only if \(M=\prod_{i=1}^n S^2\) (see Corollary~\ref{sec:twist-dirac-oper-11}).
We show that a \(2n\)-dimensional quasitoric manifold whose orbit polytope admits a facet coloring with \(n\) colors is an example of such a manifold.
Hence, we get:

\begin{theorem}[Corollary \ref{sec:twist-dirac-oper-16}]
\label{sec:introduction-1}
If \(M\) is a \(2n\)-dimensional quasitoric manifold whose orbit polytope admits a facet coloring with \(n\) colors, then we have \(N(M)\leq 3n\) with equality holding if and only if \(M=\prod_{i=1}^n S^2\).  
\end{theorem}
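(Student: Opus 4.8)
The plan is to exhibit a quasitoric manifold $M=M(P,\lambda)$ of dimension $2n$ satisfying the colouring hypothesis as an instance of the manifolds covered by Corollary~\ref{sec:twist-dirac-oper-11}, and then to quote that corollary. Thus I must verify its three hypotheses: that $M$ is a $\mathrm{Spin}^c$-manifold, that $\chi(M)\neq 0$, and that $M$ admits a twisted Dirac operator of the type considered in Theorem~\ref{sec:twist-dirac-oper-8} with non-vanishing index. The first two are immediate from the general theory of quasitoric manifolds: $M$ carries a $T^n$-equivariant stably complex structure (an omniorientation, following \cite{0733.52006}), hence a $T^n$-equivariant $\mathrm{Spin}^c$-structure; and $\chi(M)$ equals the number of vertices of $P$, so in particular $\chi(M)>0$. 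The substance of the proof is therefore the construction of the twisted Dirac operator, and this is where the facet colouring is used.

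Write $F_1,\dots,F_m$ for the facets of $P$ and $c\colon\{F_1,\dots,F_m\}\to\{1,\dots,n\}$ for the colouring. Since $P$ is simple, exactly $n$ facets meet at each vertex, and since $c$ uses $n$ colours these $n$ facets receive $n$ distinct colours; equivalently every colour occurs exactly once at every vertex. Each facet $F_i$ carries an associated $T^n$-equivariant complex line bundle $L_i$ on $M$, with $c_1(L_i)$ Poincar\'e dual to the characteristic submanifold $\pi^{-1}(F_i)$, and the stable tangent bundle of $M$ is $TM\oplus\R^{2(m-n)}\cong\bigoplus_{i=1}^m L_i$. Grouping summands by colour, $E_k:=\bigoplus_{c(F_i)=k}L_i$, produces a $T^n$-equivariant decomposition of the stable tangent bundle into $n$ pieces, one per colour. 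I would use this to assemble the twisting bundle of a Dirac operator of the form permitted by Theorem~\ref{sec:twist-dirac-oper-8} as an $n$-fold tensor product whose $k$-th factor is built out of $E_k$ (an exterior- or symmetric-algebra type expression, or the precise variant dictated by the shape of the twist in Theorem~\ref{sec:twist-dirac-oper-8}).

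It then remains to show that the resulting index does not vanish, and this is the step I expect to require the most care: one must match the combinatorics of the $n$-colouring to the algebraic form of the twisting bundle, and check that there is no cancellation. I would compute the index by localisation at the $T^n$-fixed points of $M$, which are the vertices of $P$: at a vertex $v$ lying on facets $F_{i_1},\dots,F_{i_n}$, whose colours are all distinct, the local contribution factors into $n$ one-variable expressions, one for each colour, and each such expression is a fixed non-zero rational number, namely the local index contribution of the corresponding operator on $\C P^1$. Hence the vertex contributions do not cancel and the total index equals, up to a fixed non-zero factor, the number of vertices of $P$ (indeed one should recognise this characteristic number as $\pm\chi(M)$), which is non-zero. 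With all three hypotheses of Corollary~\ref{sec:twist-dirac-oper-11} verified, that corollary gives $N(M)\le 3n$ with equality precisely for $M=\prod_{i=1}^n S^2$; and consistently $\prod_{i=1}^n S^2$ is itself a quasitoric manifold over the $n$-cube, which carries such a colouring, with $\mathrm{SO}(3)^n$ acting effectively, so the bound is attained.
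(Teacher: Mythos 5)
Your overall strategy matches the paper's: verify the hypotheses of Corollary~\ref{sec:twist-dirac-oper-11} for a quasitoric manifold whose orbit polytope has an $n$-colouring, then quote that corollary. But there are two genuine gaps at precisely the two points you flagged as requiring care.

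First, the construction of the twist. Your $E_k = \bigoplus_{c(F_i)=k}L_i$ has rank $|c^{-1}(k)|$, which is typically larger than $1$, so $V=\bigoplus E_k$ has rank $m>n$; its Euler class then lies in $H^{2m}(M)=0$ and the top-degree contribution to $\varphi^c(M;V,0)$ vanishes identically. What the paper actually takes (Lemma~\ref{sec:twist-dirac-oper-6}) is $V=\bigoplus_{i=1}^n L_{f^{-1}(i)}$ where $L_{f^{-1}(i)}$ is the single line bundle with $c_1(L_{f^{-1}(i)})=\sum_{F_j\in f^{-1}(i)}\pm u_j$ — in effect the determinant line of your $E_k$ (with sign choices). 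Moreover the colouring hypothesis is used in a specific algebraic way that your sketch never touches: since same-coloured facets are disjoint, $u_ju_{j'}=0$ whenever $c(F_j)=c(F_{j'})$ and $j\neq j'$, so the cross terms in $(\sum_{F_j\in f^{-1}(i)}\pm u_j)^2$ vanish and $p_1(V)=\sum_j u_j^2=p_1(M)$. Without this the hypothesis $p_1(V+W-TM)=0$ of Theorem~\ref{sec:twist-dirac-oper-8} fails and Corollary~\ref{sec:twist-dirac-oper-11} does not apply.

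Second, the non-vanishing. Your claim that the index ``equals, up to a fixed non-zero factor, the number of vertices of $P$'' is false. With the $V$ above, $\varphi^c(M;V,0)=\langle\prod_i c_1(L_{f^{-1}(i)}),[M]\rangle$; expanding, the nonzero terms are a \emph{signed} sum over vertices, and the signs depend on the chosen $\mathrm{Spin}^c$-structure (equivalently on the $\pm$'s in front of the $u_j$). These can cancel: for example on $\C P^1\times\C P^1$ over the square, coloured $\{F_1,F_3\}/\{F_2,F_4\}$, taking $c_1(L_1)=u_1-u_3$ and $c_1(L_2)=u_2+u_4$ gives index $1+1-1-1=0$, while the other choice of sign gives $4$. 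So localisation at fixed points does not produce a universal nonzero number per vertex, and your argument as written does not rule out total cancellation. The paper handles this by varying the $\mathrm{Spin}^c$-structure (i.e.\ the sign assignments) and taking linear combinations: if $\varphi^c(M;V,0)$ vanished for \emph{all} sign choices, then each individual product $u_{i_1}\cdots u_{i_n}$ with $F_{i_j}\in f^{-1}(j)$ would vanish, contradicting the existence of a vertex of $P$ (all of whose facets have distinct colours since $f$ is a proper colouring). You need either this averaging argument or some other device to select signs for which the vertex contributions do not cancel.
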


Moreover, we show that if a \(2n\)-dimensional \(\text{Spin}^c\)-manifold \(M\)  admits a twisted Dirac-operator with non-vanishing index and an effective action of a non-abelian compact connected Lie-group \(G\), then the order of the Weyl-group of \(G\) divides the Euler-characteristic of \(M\) (see Corollary \ref{sec:twist-dirac-oper-7}).
This enables us to prove the following result.

\begin{theorem}[Corollary~\ref{sec:twist-dirac-oper-15}, Corollary~\ref{sec:twist-dirac-oper-13}]
  Let \(n\geq 2\). Then we have:
  \begin{enumerate}
  \item If \(n\) is odd, then there are infinitely many quasitoric manifolds \(M\) of dimension \(2n\) with \(N^{ss}(M)\leq 3\), i.e. the only semi-simple simply connected compact Lie-group which can act almost effectively on \(M\) is \(SU(2)\).
  \item If \(n\) is even, then there are infinitely many quasitoric manifolds of dimension \(2n\) on which no semi-simple compact connected Lie-group can act effectively. 
  \end{enumerate}
\end{theorem}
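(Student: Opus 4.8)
The plan is to derive both parts from Corollary~\ref{sec:twist-dirac-oper-7}. For a quasitoric manifold \(M\) over a simple polytope \(P\) one has \(\chi(M)=f_0(P)\), the number of vertices of \(P\) (the torus fixed points), and by the observation preceding Theorem~\ref{sec:introduction-1} a quasitoric manifold over an \(n\)-colorable simple \(n\)-polytope carries a twisted Dirac operator with non-vanishing index. So if a non-abelian compact connected Lie group \(G\) acts effectively on such an \(M\), then \(|W(G)|\mid f_0(P)\); if \(G\) acts only almost effectively, one first divides out its (finite, hence central) ineffective kernel, which changes neither the non-abelianness nor the root system, hence not the order of the Weyl group. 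Thus everything reduces to constructing, for the appropriate \(n\), infinitely many simple \(n\)-polytopes carrying a quasitoric manifold with such a Dirac operator and with \(f_0(P)\) chosen so that \(|W(G)|\mid f_0(P)\) becomes as restrictive as claimed. The number-theoretic input I will use is: for a semi-simple simply connected compact Lie group \(G=\prod_i G_i\) (the \(G_i\) simple) one has \(|W(G)|=\prod_i|W(G_i)|\), and inspecting the orders \(|W(A_\ell)|=(\ell+1)!\), \(|W(B_\ell)|=|W(C_\ell)|=2^\ell\ell!\), \(|W(D_\ell)|=2^{\ell-1}\ell!\), \(|W(G_2)|=12\), \(|W(F_4)|=1152\), \(|W(E_6)|=51840\), \(|W(E_7)|=2903040\), \(|W(E_8)|=696729600\) shows that \(4\mid|W(G)|\) unless \(G=SU(2)\) (with \(|W|=2\)) or \(G=SU(3)\) (with \(|W|=6\)); in particular \(|W(G)|\) is even whenever \(G\) is non-abelian, and \(4\mid|W(G)|\) or \(3\mid|W(G)|\) whenever \(G\neq SU(2)\).

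For part~(1) let \(n\geq 3\) be odd and let \(P_k\) be the connected sum of \(k\) copies of the \(n\)-cube. The \(n\)-cube is \(n\)-colorable (colour the two facets normal to \(e_i\) with the colour \(i\)), and the polytope connected sum of \(n\)-colorable simple \(n\)-polytopes is again \(n\)-colorable: after permuting the colours on one summand so that the identified facets match, only the \(n\) facets through the two chosen vertices—already carrying all \(n\) colours—are glued and no new adjacencies between the remaining facets are created. Hence each \(P_k\) is \(n\)-colorable and carries a quasitoric manifold \(M_k\) (e.g.\ with the colouring itself as characteristic function) with a twisted Dirac operator of non-vanishing index. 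From \(f_0(P\#Q)=f_0(P)+f_0(Q)-2\) we get \(f_0(P_k)=k(2^n-2)+2\). Taking \(k\) even makes this \(\equiv 2\pmod 4\), and since \(n\) is odd \(2^n\equiv 2\pmod 3\), so \(f_0(P_k)\equiv 2\pmod 3\); thus \(\chi(M_k)=f_0(P_k)\) is divisible neither by \(4\) nor by \(3\). By the arithmetic input and Corollary~\ref{sec:twist-dirac-oper-7}, no semi-simple simply connected compact Lie group other than \(SU(2)\) can act almost effectively on \(M_k\); passing to universal covers gives \(N^{ss}(M_k)\leq\dim SU(2)=3\), and since \(f_0(P_k)\to\infty\) the \(M_k\) are infinitely many manifolds with pairwise distinct Euler characteristic.

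For part~(2) let \(n\geq 2\) be even. By the arithmetic input it is enough to exhibit infinitely many \(2n\)-dimensional quasitoric manifolds that carry a twisted Dirac operator with non-vanishing index and have \(\chi(M)\) odd: then for any non-abelian compact connected Lie group \(G\) the even number \(|W(G)|\) cannot divide \(\chi(M)\), so by Corollary~\ref{sec:twist-dirac-oper-7} no such \(G\)—in particular no positive-dimensional semi-simple compact connected Lie group—acts effectively on \(M\). One cannot take \(M\) over an \(n\)-colorable polytope: for even \(n\) such a polytope has an even number of vertices, since the facets of a fixed colour partition the vertex set and each of them is a simple polytope of the odd dimension \(n-1\), hence has an even number of vertices. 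Instead I would start from an \(n\)-colorable example \(M_0\) and apply an equivariant modification changing \(\chi\) by the odd number \(n-1\)—an equivariant blow-up at a fixed point, which corresponds to truncating one vertex of the polytope and raises \(f_0\) by \(n-1\)—to obtain an \(M_1\) with \(\chi(M_1)\) odd; having one such example, further equivariant connected sums with \((S^2)^n\) change \(\chi\) by the even number \(2^n-2\) and so produce infinitely many manifolds with odd Euler characteristic.

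The main obstacle is exactly the step just described in part~(2): the modified manifolds are no longer quasitoric over \(n\)-colorable polytopes (and, since \(\C P^n\) shows not every quasitoric manifold carries the relevant operator, this is not automatic), so one must verify directly that the twisted Dirac operator still has non-vanishing index there. This is a question about the behaviour of the index under an equivariant blow-up—controlling the contribution of the exceptional \(\C P^{n-1}\)—rather than about Lie theory; by comparison the Weyl-group arithmetic and the polytope combinatorics used in part~(1) are routine, as is the minor point of passing between ``effective'' and ``almost effective'' actions.
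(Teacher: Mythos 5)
Your part~(1) is correct and uses a genuinely different construction than the paper: you work with quasitoric manifolds over polytope connected sums of $n$-cubes, controlling $f_0(P_k)=k(2^n-2)+2$ directly, whereas the paper takes a $4$-dimensional quasitoric manifold $M_1$ over a $6k$-gon, sets $M=(M_1\times M_2)\#(M_1\times M_2)$ with $M_2$ over the $(n-2)$-cube, and reads off $\chi(M)=12k\cdot 2^{n-2}-2$, again divisible by neither~$3$ nor~$4$. Both reduce to the same Weyl-group-order arithmetic via Corollary~\ref{sec:twist-dirac-oper-7}, and your combinatorial route (colour-matching at the glued vertices, $f_0$ of a polytope connected sum) is self-contained and arguably more elementary; the paper's route has the advantage that the non-vanishing of the index for $(M_1\times M_2)\#(M_1\times M_2)$ is an immediate application of the product formula (Lemma~\ref{sec:prod-conn-sums-2}) and the connected-sum formula (Lemma~\ref{sec:prod-conn-sums}), which are needed for part~(2) anyway. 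One also needs to observe, as the paper does implicitly, that Corollary~\ref{sec:twist-dirac-oper-15} is formulated for \emph{all} dimensions $\geq 4$; the ``odd $n$'' restriction in the introduction theorem is only there because the even case is superseded by the sharper statement of part~(2).

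In part~(2) you correctly diagnose the situation (for even $n$ an $n$-coloured polytope has even $f_0$, so one must leave the $n$-colourable world by a vertex truncation, i.e.\ an equivariant connected sum with $\overline{\C P}{}^n$, to make $\chi$ odd, and then iterate), and this matches the paper's construction $M=M'\#\,l\,\C P^{2k}$ with $l$ odd. But you explicitly leave open the one step that actually carries the proof: that the twisted Dirac index survives the blow-up. This is a genuine gap. The paper closes it with two lemmas you did not supply. Lemma~\ref{sec:prod-conn-sums} computes $\varphi^c(M_1\#M_2;V,W)$ explicitly for $V_i,W_i$ sums of line bundles satisfying $c_1(V_i)=c_1^c(M_i)$, $p_1(V_i+W_i-TM_i)=0$, giving (when $\dim V_1=\dim V_2$) $2^{\dim_\C W_2}\varphi^c(M_1;V_1,W_1)+2^{\dim_\C W_1}\varphi^c(M_2;V_2,W_2)$; and Theorem~\ref{sec:twist-dirac-oper-14} shows that on \emph{any} quasitoric $M_2$ one can choose $V_2,W_2$ as a partition of the line-bundle decomposition of $TM_2\oplus\R^{2m-2n}$ with $\varphi^c(M_2;V_2,W_2)=0$ (because it equals, up to a power of two, an elliptic genus of a sub-quasitoric $\text{Spin}$-manifold, which vanishes by the rigidity theorem applied to an odd $S^1$-action). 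Lemma~\ref{sec:prod-conn-sums-1} combines these so that the $\C P^{2k}$ summand contributes nothing and $\varphi^c(M_1\#M_2;V,W)=2^k\varphi^c(M_1;V_1,W_1)\neq 0$. Without some version of this index-under-connected-sum computation, the argument in part~(2) does not go through; it is not merely a ``minor point'' as compared with the Lie-theoretic arithmetic but the technical heart of that case.
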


We also study those \(2n\)-dimensional quasitoric manifolds whose orbit polytopes admit facet colorings with \(n\) colors and have relatively many non-abelian  symmetries.
For these manifolds we have the following theorem.

\begin{theorem}[Theorem~\ref{sec:quas-manif-with-1}, Theorem~\ref{sec:quas-manif-with-2}]
  Let \(M\) be a \(2n\)-dimensional quasitoric manifold whose orbit polytope admits a facet coloring with \(n\) colors. Assume that one of the following two conditions holds:
  \begin{enumerate}
  \item There is an action of a compact Lie-group on \(M\) such that \(\dim M/G \leq 1\).
  \item We have \(N(M)\geq 3n-4\).
  \end{enumerate}
Then \(M\) is the total space of a fiber bundle over \(\prod S^2\).
\end{theorem}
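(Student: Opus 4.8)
The plan is to extract one $2$-sphere factor of the base from a non-abelian part of the symmetry group and then to induct on $n$. Throughout we use the two features of an $n$-colourable quasitoric manifold $M$: it carries a twisted Dirac operator with non-vanishing index, and $\chi(M)>0$. Hence Corollaries~\ref{sec:twist-dirac-oper-11} and~\ref{sec:twist-dirac-oper-7} apply to any smooth effective action of a compact Lie group $G$ on $M$; moreover we may replace $G$ by its identity component, since this changes neither $\dim G$ nor $\dim M/G$ and preserves the inequality in~(2).

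First I would determine the isomorphism type of $G$. As $\chi(M)\neq 0$, every torus acting effectively on $M$ has rank $\le n$, so $\rank G\le n$. Combining this with Theorem~\ref{sec:twist-dirac-oper-8} and Corollary~\ref{sec:twist-dirac-oper-7} -- which, through the vanishing of the twisted index and the divisibility $|W(G)|\mid\chi(M)$, together with the classical bounds on compact transformation groups of $2n$-manifolds, exclude all simple factors of rank $\ge 2$ -- I expect to obtain that a finite cover of $G$ is isomorphic to $SU(2)^{k}\times T^{l}$. In case~(2) the inequalities $3k+l=\dim G\ge 3n-4$ and $k+l=\rank G\le n$ give $k\ge n-2$; in case~(1), using $\dim G\ge 2n-1$, one gets $k+l\ge n-1$, and in particular $k\ge 1$ unless the action has cohomogeneity $0$ -- in which case $M$ is an $n$-colourable homogeneous quasitoric manifold, hence (being a product of complex projective spaces) equal to $\prod_{i=1}^{n}S^{2}$, and we are done. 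So in all remaining cases at least one $SU(2)$-factor (acting through $SU(2)$ or $SO(3)$) is present.

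Next comes the geometric core: from one such rank-one factor I would produce an equivariant reduction $M\cong SO(3)\times_{SO(2)}M'$, with $M'$ a $2(n-1)$-dimensional $(n-1)$-colourable quasitoric manifold on which the residual circle and the remaining factors of $G$ act. The idea is to arrange the maximal torus of the rank-one factor inside the quasitoric torus $T^{n}$; its non-trivial Weyl element then acts on the orbit polytope $P^{n}$ as an involution, and using the $n$-colouring of $P$ -- and the fact that this involution is realised by an $SO(3)$, not merely by a circle -- one locates a colour class of facets permuted by it along which $P$ splits as $I\times P^{n-1}$ with $P^{n-1}$ again $(n-1)$-colourable. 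Tracking the characteristic function through this splitting exhibits $M$ as above, and a dimension count shows that the group acting on $M'$ again satisfies the hypothesis of the theorem with $n$ replaced by $n-1$ ($\dim\ge 3(n-1)-4$ in case~(2), cohomogeneity $\le 1$ in case~(1)).

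Finally I would induct on $n$: by induction $M'$ is the total space of a fibre bundle over a product of $2$-spheres, $SO(2)$-equivariantly for the residual circle acting along the fibres, whence $M\cong SO(3)\times_{SO(2)}M'$ is a fibre bundle over that product times one further $S^{2}$; the base cases $n\le 2$ are checked directly, using $N(\prod S^{2})=3n$ and the classification of quasitoric manifolds of dimension $\le 4$. The main obstacle is the geometric core of the third paragraph: a priori an $SU(2)$- or $SO(3)$-action on $M$ has nothing to do with the quasitoric structure, so one must show that such an action can be normalised so that its maximal torus lies inside $T^{n}$ and its Weyl element splits off exactly one $S^{2}$-factor, ruling out ``twisted'' actions that do not arise from such a product reduction; controlling the interplay between the rank-one factor, a characteristic function realising the colouring of $P$, and the combinatorics of $P$ is where essentially all the work lies.
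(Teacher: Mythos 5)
Your overall strategy---use the non-vanishing twisted index and $\chi(M)>0$ to constrain the acting group $G$, then split off $S^2$-factors and induct---is reasonable in outline, but two of its load-bearing steps do not go through as stated, and they are precisely the places where the paper uses machinery you have not invoked.

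The first gap is in the claim that Corollary~\ref{sec:twist-dirac-oper-7} ``together with the classical bounds on compact transformation groups'' excludes all simple factors of rank $\geq 2$, so that a cover of $G$ is $SU(2)^k\times T^l$. The divisibility $|W(G)|\mid\chi(M)$ carries no such force here, because nothing in the hypotheses pins down $\chi(M)$: an $n$-colourable quasitoric manifold can have $\chi(M)$ divisible by $6$, $24$, etc., so a factor $SU(3)$ or $Sp(2)$ is not ruled out by divisibility. What the paper actually uses is Corollary~\ref{sec:twist-dirac-oper-5} (at any $T$-fixed point $x$ the full isotropy is $G_x=T$), fed into the structure theory of torus manifolds with $G$-action from \cite{torus}: Remark~2.9 there gives $\tilde G=\prod SU(l_i+1)\times T^{l_0}$ when $\rank G=n$, and Lemmas~3.1, 3.4 there identify $SU(l_i+1)_x$ as $S(U(l_i)\times U(1))$ or $SU(l_i+1)$, which contradicts $G_x=T$ unless $l_i=1$. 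For $\rank G<n$ the paper does not need this classification; instead it shows in Theorem~\ref{sec:quas-manif-with-2}, via the numerical bounds of Corollary~\ref{sec:twist-dirac-oper-3} and the cohomogeneity-one analysis, that the relevant ranks are impossible or reduce to a higher-symmetry case. Your route to $SU(2)^k\times T^l$ therefore needs to be replaced.

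The second gap you identify yourself: the ``geometric core,'' producing $M\cong SO(3)\times_{SO(2)}M'$ from a single rank-one factor, is asserted but not proved, and you note that controlling the interplay between the $SU(2)$-action and the quasitoric structure ``is where essentially all the work lies.'' That work is not done. The paper instead appeals to Corollary~5.6 of \cite{torus} (iterated) to produce the bundle $M\to\prod S^2$ when $\rank G=n$, and for $\dim M/G\le 1$ it runs the standard cohomogeneity-one analysis (orbit space $[0,1]$, identifying the non-principal isotropy groups via Theorem~\ref{sec:twist-dirac-oper-8} and simple connectivity) followed by Lemma~\ref{sec:quas-manif-with} on $q$-type spaces and Theorem~5.3 of \cite{chapter6}. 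Your induction also implicitly requires that the fibre $M'$ is again $(n-1)$-colourable; this is neither established nor needed for the paper's conclusion, and in fact the conclusion of Theorem~\ref{sec:quas-manif-with-2} has base $\prod^{n-l_0}S^2$ with a higher-dimensional fibre, so the number of split-off $S^2$-factors is not $n$ in general and a colourability-preserving induction would prove something different.
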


By considering twisted Dirac-operators we can also prove the following theorem:
\begin{theorem}[Corollary \ref{sec:two-vanish-results-2}]
\label{sec:introduction}
  Let \(M\) be a \(\text{Spin}\)-manifold with \(p_1(M)=0\), \(G\) an exceptional Lie-group or \(G=\text{Spin}(2l+1)\) or \(G=Sp(l)\) with \(l=1,3,6\) or \(l\geq 15\) and \(T\) a maximal torus of \(G\). If the Witten-genus of \(M\) does not vanish, then we have \(N^{ss}(M\times \prod_{i=1}^k G/T)=k\dim G\).
\end{theorem}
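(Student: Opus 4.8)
The plan is to bound $N^{ss}(X)$ from below and from above by $k\dim G$, where $X:=M\times\prod_{i=1}^k G/T$. The lower bound is immediate: $G$ acts on $G/T$ by left translations, the kernel of this action is $\bigcap_{g\in G}gTg^{-1}=Z(G)$, so the compact connected semisimple group $G/Z(G)$, of dimension $\dim G$, acts effectively on $G/T$; letting $(G/Z(G))^k$ act on the flag factors and trivially on $M$ gives an effective action on $X$, so $N^{ss}(X)\geq k\dim G$.

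For the upper bound I would first exhibit on $X$ a twisted Dirac operator of the kind appearing in Theorem~\ref{sec:twist-dirac-oper-8} whose index does not vanish. The manifold $X$ is $\text{Spin}$ (both $M$ and each $G/T$ are; for $G/T$ one uses that its first Chern class equals $2\delta$, twice the sum of the fundamental weights) and $p_1(X)=0$: additivity of $p_1$ over products reduces this to $p_1(G/T)=0$, and $p_1(G/T)=\sum_{\alpha>0}\alpha^2\in H^4(G/T)$, with $\alpha$ running over the positive roots viewed in $H^2(G/T)$, is a multiple of the essentially unique Weyl-invariant quadratic form, hence vanishes in $H^4(G/T;\Q)$ and so, the cohomology of $G/T$ being torsion-free, in $H^4(G/T;\mathbb{Z})$. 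Equip $X$ with the $\text{Spin}^c$-structure that is the given $\text{Spin}$-structure on $M$ together with the canonical almost-complex $\text{Spin}^c$-structure on $\prod_i G/T$, with Witten-type twisting bundle $\mathcal{W}_X=\bigotimes_{n\geq 1}S_{q^n}(\widetilde{TX}\otimes\C)$. Since the Witten bundle and the Dirac index are multiplicative under external products of manifolds, $\ind(D^c_X\otimes\mathcal{W}_X)=\varphi_W(M)\cdot\Phi(G/T)^k$, where $\Phi(G/T)=\ind(D^c_{G/T}\otimes\mathcal{W}_{G/T})\in\mathbb{Z}[[q]]$ has constant term $\ind(D^c_{G/T})=\sum_j(-1)^j\dim H^j(G/T;\mathcal{O}_{G/T})=1$ and is thus non-zero; as $\varphi_W(M)\neq 0$ by hypothesis and $\mathbb{Z}[[q]]$ is a domain, $\ind(D^c_X\otimes\mathcal{W}_X)\neq 0$.

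Now let $H$ be a compact connected semisimple Lie group acting effectively on $X$. I would run the fixed-point analysis underlying Theorem~\ref{sec:twist-dirac-oper-8} and Corollaries~\ref{sec:twist-dirac-oper-11} and~\ref{sec:twist-dirac-oper-7}, refined to exploit the splitting $X=M\times\prod G/T$: the non-vanishing of $\ind(D^c_X\otimes\mathcal{W}_X)$ forces the action to be not ``too non-abelian'', and the aim is to conclude that it decomposes, up to permutations of the identical flag factors, as a product of actions on $M$ and on the individual factors $G/T$. Granting this, the $M$-part is trivial because $M$ is $\text{Spin}$ with $p_1(M)=0$ and $\varphi_W(M)\neq 0$ and hence, by Theorem~\ref{sec:twist-dirac-oper-8} applied to $D^{\text{Spin}}_M\otimes\mathcal{W}_M$, carries no non-trivial action of a compact connected semisimple group; and each flag factor contributes at most $\dim G$, the maximal dimension of a compact connected semisimple group acting effectively on $G/T$. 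Summing over the factors gives $\dim H\leq k\dim G$, which together with the lower bound yields the asserted equality.

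The principal obstacle is the splitting step: showing that a semisimple action on $M\times\prod G/T$ respects the product up to permutation of identical flag factors, so that the Witten-genus obstruction for $M$ and the rigidity of $G/T$ can be applied factorwise. This is exactly where the hypotheses on $G$ are used: passing to the exceptional groups and to $l=1,3,6$ or $l\geq 15$ is precisely what makes the relevant classification of large semisimple actions on $\prod G/T$ go through, excluding the sporadic low-rank flag manifolds in types $B_l$ and $C_l$ that carry larger or exotic compact transformation groups, on which the argument would break and which would also push $N^{ss}(\prod G/T)$ above $k\dim G$. A minor routine point: since $H$ is connected, the chosen $\text{Spin}^c$-structure and twisting bundle on $X$ can be taken $H$-equivariant (after replacing $H$ by a finite cover if necessary), so the equivariant index theory of Theorem~\ref{sec:twist-dirac-oper-8} applies.
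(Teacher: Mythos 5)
Your lower bound, the verification that $X=M\times\prod_{i=1}^k G/T$ is $\text{Spin}$ with $p_1(X)=0$, and the multiplicativity computation showing $\varphi^c(X;0,0)=\varphi^c(M;0,0)\cdot\Phi(G/T)^k\neq 0$ are all correct and match the starting point of the paper's argument. But the step you yourself flag as the ``principal obstacle'' --- forcing a semisimple action on $X$ to decompose, up to permutation of identical factors, as a product of actions on $M$ and the individual $G/T$'s --- is a genuine gap, and it is not how the paper proceeds. No such splitting of the action is established or needed.

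The paper's actual mechanism is a pair of \emph{rank bounds} rather than a structural decomposition. Theorem~\ref{sec:two-vanish-results}, applied with $M'=\prod_{i=1}^k G/T$ and the $kl$ classes $x_i$ coming from a basis of $H^2$, shows that any simply connected semisimple $H$ acting almost effectively on $X$ satisfies $\rank H\leq k\rank G$ (Corollary~\ref{sec:vanish-result-witt-1}); the refinement Theorem~\ref{sec:two-vanish-results-3}, which exploits Weyl-group invariance of $\sum a_i^2$ factor by factor together with the fact that a \emph{simple} group has no invariant subtorus, shows in addition that every simple factor of $H$ has rank $\leq l:=\rank G$ (Corollary~\ref{sec:two-vanish-results-1}). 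From these one gets purely group-theoretically
\begin{equation*}
\dim H \leq \alpha_l\,\rank H \leq \alpha_l\, k\, l,
\end{equation*}
where $\alpha_l=\max\{\dim G'/\rank G' : G' \text{ simple},\ \rank G'\leq l\}$. The hypotheses on $G$ (exceptional, or $\text{Spin}(2l+1)$, $Sp(l)$ with $l=1,3,6$ or $l\geq 15$) are used precisely and only to guarantee $\dim G=\alpha_l\cdot l$ (these are the maximizers in Table~\ref{tab:erste}), which makes the upper bound collapse to $k\dim G$. This is quite different from your suggested reason --- it has nothing to do with excluding ``exotic compact transformation groups'' on low-rank $B_l,C_l$ flag manifolds; indeed $N^{ss}(H/T)=\dim H$ for \emph{every} semisimple $H$ by Hauschild, so the single-factor contribution is never the issue. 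The issue is that the rank constraints alone permit $\dim H$ up to $\alpha_l kl$, and only for the listed $G$ does this coincide with $k\dim G$.

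So: the place where your argument would stall (the splitting of the action) should be replaced by the two rank inequalities, and the role of the restrictions on $G$ should be reassigned from ``classification of actions on $\prod G/T$'' to ``$G$ attains the extremal ratio $\dim/\rank$ among simple groups of its rank.''
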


If more generally \(G\) is a semi-simple compact connected Lie-group with maximal torus \(T\), then we still get upper bounds for the semi-simple symmetry degree of \(M\times G/T\).
But we do not get the exact value of \(N^{ss}(M\times G/T)\) in the more general setting.
It should be noted here, that it has been shown by Hauschild \cite{0623.57024} that  the semi-simple symmetry degree of \(G/T\) is equal to \(\dim G\) if \(G\) is a semi-simple compact connected Lie-group with maximal torus \(T\).
So Theorem~\ref{sec:introduction} may be viewed as a partial generalization of his result.

This paper is organized as follows.
In sections \ref{sec:twisted} and \ref{sec:prod} we discuss indices of twisted Dirac-operators on \(\text{Spin}^c\)-manifolds.
Then we prove Theorem~\ref{sec:introduction} in section \ref{sec:van_witten}.
In section \ref{sec:qt_mfd}  we apply the results of the previous sections to show that there are quasitoric manifolds with low semi-simple symmetry degree.
In sections \ref{sec:cohom1} and \ref{sec:cube} we study those quasitoric manifolds which have a non-vanishing index and have relatively many non-abelian symmetries.
In section \ref{sec:highly} we show that \(\C P^n\) is the most symmetric quasitoric manifold in dimension \(2n\). This section is independent of the other sections.
In appendix \ref{sec:tori} we prove some technical details which are needed in section \ref{sec:twisted}.

I would like to thank Anand Dessai for comments on an earlier version of this paper.
I would also like to thank Nigel Ray and the University of Manchester for hospitality while I was working on this paper.

\section{Twisted Dirac-operators and non-abelian Lie-group actions}
\label{sec:twisted}

The purpose of this section is to generalize some results of \cite{0963.19002} to a class of non-abelian compact non-connected Lie-groups.

We begin with a review of some well known facts about \(\text{Spin}^c\)-manifolds and the results of \cite{0963.19002} and \cite{0944.58019}.
For more background information about the group \(\text{Spin}^c(k)\) and \(\text{Spin}^c\)-structures on manifolds see for example \cite{0146.19001}, \cite{0247.57010} and \cite{0395.57020}.

An orientable manifold \(M\) has a \(\text{Spin}^c\)-structure if and only if the second Stiefel-Whitney-class \(w_2(M)\)  of \(M\) is the \(\!\!\!\!\mod 2\)-reduction of an integral class \(c\in H^2(M;\mathbb{Z})\).
Associated to a \(\text{Spin}^c\)-structure on \(M\) there is a complex line bundle.
We denote the first Chern-class of this line bundle by \(c_1^c(M)\).
Its \(\!\!\!\!\mod 2\)-reduction is \(w_2(M)\) and we should note that any integral cohomology class whose \(\!\!\!\!\mod 2\)-reduction is \(w_2(M)\) may be realized as the first Chern-class of a line bundle associated to some \(\text{Spin}^c\)-structure on \(M\).

Let \(M\) be a \(2n\)-dimensional \(\text{Spin}^c\)-manifold on which \(S^1\) acts smoothly.
We say that the \(S^1\)-action on \(M\) lifts into the \(\text{Spin}^c\)-structure \(P\), if there is a \(S^1\)-action on \(P\) which commutes with the \(\text{Spin}^c(2n)\)-action on \(P\) such that the projection \(P\rightarrow M\) is \(S^1\)-equivariant.

\begin{lemma}
\label{sec:twist-dirac-oper-12}
  The \(S^1\)-action on \(M\) lifts into the \(\text{Spin}^c\)-structure if and only if it lifts to an action on the line bundle associated to the \(\text{Spin}^c\)-structure.
\end{lemma}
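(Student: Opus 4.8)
The plan is to present the \(\text{Spin}^c\)-structure as a two-fold covering of a principal \(SO(2n)\times U(1)\)-bundle and to lift the action through this covering. Recall that \(\text{Spin}^c(2n)=\bigl(\text{Spin}(2n)\times U(1)\bigr)/\{\pm(1,1)\}\) and that the homomorphism \(\text{Spin}^c(2n)\to SO(2n)\times U(1)\), \([g,z]\mapsto(\bar g,z^{2})\) (with \(\bar g\) the image of \(g\) in \(SO(2n)\)), is a connected two-fold covering with kernel generated by \([-1,1]\). After fixing an \(S^1\)-invariant Riemannian metric on \(M\), I would write \(F\) for the oriented orthonormal frame bundle and \(Q\) for the principal \(U(1)\)-bundle underlying the line bundle \(L\) associated with \(P\); then \(F=P/U(1)\), \(Q=P/\text{Spin}(2n)\) and \(F\times_M Q=P\times_{\text{Spin}^c(2n)}(SO(2n)\times U(1))\), so \(P\to F\times_M Q\) is precisely the two-fold covering induced by the above map of structure groups, and (as \(M\) is connected) its deck group is \(\mathbb{Z}/2\).

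The direction ``\(\Rightarrow\)'' is immediate: an \(S^1\)-action on \(P\) commuting with \(\text{Spin}^c(2n)\) and covering the action on \(M\) induces the well-defined action \(s\cdot[p,v]=[s\cdot p,v]\) on the associated line bundle \(L=P\times_{\text{Spin}^c(2n)}\C\), which covers the action on \(M\). For ``\(\Leftarrow\)'' I would start from a lift of the action to \(Q\) (equivalently to \(L\)); together with the canonical lift to \(F\) this gives an \(S^1\)-action on \(F\times_M Q\) commuting with \(SO(2n)\times U(1)\). Since the universal covering group \(\R\to S^1\) is simply connected, the induced \(\R\)-action lifts to an \(\R\)-action on the covering space \(P\). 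This \(\R\)-action descends to an \(S^1\)-action on \(P\) if and only if the generator of \(\pi_1(S^1)\) acts on \(P\) by the trivial deck transformation, i.e.\ iff a class \(\varepsilon=\langle w,\gamma\rangle\in\mathbb{Z}/2\) vanishes, where \(w\in H^{1}(F\times_M Q;\mathbb{Z}/2)\) classifies the covering \(P\) and \(\gamma\) is an arbitrary \(S^1\)-orbit in \(F\times_M Q\).

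The hard part is the vanishing of \(\varepsilon\), and the point will be that it can be forced by re-choosing the lift to \(Q\): replacing the \(S^1\)-action on \(Q\) by its twist with the weight-one character \(S^1\to U(1)\) changes \(\gamma\) by a loop running once around a \(\{1\}\times U(1)\)-orbit in \(F\times_M Q\), and since \(w\) restricts over such an orbit to the non-trivial double covering \(z\mapsto z^{2}\) of \(U(1)\), this toggles \(\varepsilon\). Hence after possibly passing to this twisted lift we may assume \(\varepsilon=0\), the \(\R\)-action descends, and we obtain an \(S^1\)-action on \(P\) covering the action on \(M\). Finally, it commutes with the \(\text{Spin}^c(2n)\)-action: for \(g\in\text{Spin}^c(2n)\), \(s\in S^1\) and \(p\in P\) the points \(s\cdot(g\cdot p)\) and \(g\cdot(s\cdot p)\) lie over the same point of \(F\times_M Q\), hence differ by a deck transformation that depends continuously on \((s,g,p)\) on the connected space \(S^1\times\text{Spin}^c(2n)\times P\), and is therefore the identity. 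Everything except the vanishing of \(\varepsilon\) is routine bookkeeping with associated bundles and coverings; the freedom to re-choose the lift of the action to \(L\) is exactly what makes the lemma hold with no extra hypothesis on \(M\).
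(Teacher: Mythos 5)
Your proof is correct, and it takes a genuinely different route from the one in the paper. The paper works with the \(U(1)\)-bundle \(\xi\colon P\to P/U(1)=F\) over the oriented frame bundle \(F\) and reduces the lifting problem to the vanishing of \(d_2c_1(\xi)\) in the Serre spectral sequence of \(F\to F_{S^1}\to BS^1\), following Hattori--Yoshida; the key input is that the pulled-back circle bundle of \(L\) over \(F\) equals \(\xi^2\), so \(2\,d_2c_1(\xi)=0\), and that \(E_2^{2,1}=H^2(BS^1;H^1(F;\mathbb{Z}))\) is torsion-free, which forces \(d_2c_1(\xi)=0\). You instead present \(P\) as the connected double cover of \(F\times_M Q\) induced by \(\text{Spin}^c(2n)\to SO(2n)\times U(1)\), lift the \(\mathbb{R}\)-action through this double cover, and identify the single \(\mathbb{Z}/2\)-obstruction \(\varepsilon=\langle w,\gamma\rangle\) to descending it to an \(S^1\)-action; the crucial observation is that replacing the given lift to \(Q\) by its twist by the weight-one character shifts \(\gamma\) by a \(U(1)\)-orbit, over which \(w\) evaluates non-trivially, so \(\varepsilon\) can always be killed. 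Your covering-space argument is more geometric and avoids spectral sequences; in a sense the ``toggle'' trick plays the role that torsion-freeness of \(E_2^{2,1}\) plays in the paper (both encode that the obstruction is killed by \(2\)), and both proofs hinge on the same structural fact, namely that \(\text{Spin}^c(2n)\to SO(2n)\times U(1)\) (equivalently, the central \(U(1)\to U(1)\)) is the squaring double cover. One small point worth spelling out in your writeup: the deck transformation comparing \(s\cdot(g\cdot p)\) and \(g\cdot(s\cdot p)\) is locally constant in \((s,g,p)\) because the deck group is discrete, and it equals the identity at \(s=1\); together with connectedness of \(S^1\times\text{Spin}^c(2n)\times P\) this gives the commutativity you assert.
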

\begin{proof}
  If the \(S^1\)-action lifts to an action on the \(\text{Spin}^c\)-structure \(P\) of \(M\), then it also lifts into the associated line bundle \(P\times_{\text{Spin}^c} \C\).

Now assume that the \(S^1\)-action on \(M\) lifts into the associated line bundle of \(P\).
Let \(Q\) be the oriented orthogonal frame bundle of \(M\).
Then the \(S^1\)-action lifts into \(Q\).
Moreover, by \cite[p. 127-128]{0247.57010}, the action on \(M\) lifts into \(P\) if and only if the action on \(Q\) lifts into the \(S^1\)-bundle
\begin{equation*}
  \xi:P\rightarrow P/S^1=Q.
\end{equation*}

Now we consider the Serre-spectral sequence for the fibration \(Q\rightarrow Q_{S^1}\rightarrow BS^1\).
By Corollary 1.3 of \cite[p. 14]{0346.57014}, the \(S^1\)-action lifts into \(\xi\) if and only if
\begin{align*}
  d_2c_1(\xi)&=0&\text{and}&&d_3c_1(\xi)=0.
\end{align*}
Because \(H^*(BS^1;\mathbb{Z})\) is concentrated in even degrees, this holds if and only if \(d_2c_1(\xi)=0\).

Let \(\xi'\) be the \(S^1\)-bundle over \(Q\) associated to the pullback of the line bundle associated to \(P\).
Then the \(S^1\)-action on \(Q\) lifts into \(\xi'\).
Since \(\xi'=\xi^2\), we have \(2d_2c_1(\xi)=d_2c_1(\xi')=0\).
Because \(E_2^{2,1}=H^2(BS^1;H^1(Q;\mathbb{Z}))\) is torsion-free, it follows that the \(S^1\)-action lifts into \(P\).
\end{proof}

If the \(S^1\)-action on \(M\) lifts into the \(\text{Spin}^c\)-structure, then we have an \(S^1\)-equivariant \(\text{Spin}^c\)-Dirac operator \(\partial_c\).
Its \(S^1\)-equivariant index is an element of the representation ring of \(S^1\) and is defined as
\begin{equation*}
  \ind_{S^1}(\partial_c) = \ker \partial_c - \coker \partial_c \in R(S^1).
\end{equation*}

Let \(V\) be a \(S^1\)-equivariant complex vector bundle over \(M\) and \(W\) an even-dimensional \(S^1\)-equivariant \(\text{Spin}\) vector bundle over \(M\).
With this data we build a power series \(R\in K_{S^1}(M)[[q]]\) defined by
\begin{align*}
  R&= \bigotimes_{k=1}^\infty S_{q^k}(\tilde{TM}\otimes_\R \C)\otimes \Lambda_{-1}(V^*)\otimes \bigotimes_{k=1}^\infty \Lambda_{-q^k}(\tilde{V}\otimes_\R \C)\\& \otimes \Delta(W)\otimes\bigotimes_{k=1}^\infty \Lambda_{q^n}(\tilde{W}\otimes_\R\C).
\end{align*}
Here \(q\) is a formal variable, \(\tilde{E}\) denotes the reduced vector bundle \(E -\dim E\), \(\Delta(W)\) is the full complex spinor bundle associated to the \(\text{Spin}\)-vector bundle \(W\), and \(\Lambda_t\) (resp. \(S_t\)) denotes the exterior (resp. symmetric) power operation. The tensor products are, if not indicated otherwise, taken over the complex numbers.

After extending \(\ind_{S^1}\) to power series we may define:

\begin{definition}
  Let \(\varphi^c(M;V,W)_{S^1}\) be the \(S^1\)-equivariant index of the \(\text{Spin}^c\)-Dirac operator twisted with \(R\):
  \begin{equation*}
    \varphi^c(M;V,W)_{S^1}= \ind_{S^1}(\partial_c \otimes R)\in R(S^1)[[q]].
  \end{equation*}
We denote by \(\varphi^c(M;V,W)\) the non-equivariant version of this index:
  \begin{equation*}
    \varphi^c(M;V,W)= \ind(\partial_c \otimes R)\in \mathbb{Z}[[q]].
  \end{equation*}
\end{definition}

The Atiyah-Singer index theorem \cite{0164.24301} allows us to calculate 
\begin{equation*}
  \varphi^c(M;V,W)=\langle e^{c_1^c(M)/2}\ch(R)\hat{A}(M),[M]\rangle.
\end{equation*}
Here we have
\begin{equation*}
  \ch(R)=Q_1(TM)Q_2(V)Q_3(W)
\end{equation*}
with
\begin{align*}
  Q_1(TM)&=\ch(\bigotimes_{k=1}^{\infty} S_{q^k}(\tilde{TM}\otimes_\R \C))=\prod_i\prod_{k=1}^\infty \frac{(1-q^k)^2}{(1-e^{x_i}q^k)(1-e^{-x_i}q^k)},\\
  Q_2(V)&=\ch( \Lambda_{-1}(V^*)\otimes \bigotimes_{k=1}^\infty \Lambda_{-q^k}(\tilde{V}\otimes_\R \C))\\ &= \prod_i (1-e^{-v_i})\prod_{k=1}^{\infty} \frac{(1-e^{v_i}q^k)(1-e^{-v_i}q^k)}{(1-q^k)^2},\\
  Q_3(W)&=\ch(\Delta(W)\otimes\bigotimes_{k=1}^\infty \Lambda_{q^n}(\tilde{W}\otimes_\R\C))\\ &=\prod_i(e^{w_i/2}+e^{-w_i/2})\prod_{k=1}^{\infty} \frac{(1+e^{w_i}q^k)(1+e^{-w_i}q^k)}{(1+q^k)^2}.
\end{align*}
Here \(\pm x_i\) (resp. \(v_i\) and \(\pm w_i\)) denote the formal roots of \(TM\) (resp. \(V\) and \(W\)).
If \(c_1^c(M)=c_1(V)\), then we have
\begin{equation*}
  e^{c_1^c(M)/2}Q_2(V)= e(V)\frac{1}{\hat{A}(V)}\prod_i\prod_{k=1}^{\infty} \frac{(1-e^{v_i}q^k)(1-e^{-v_i}q^k)}{(1-q^k)^2}=e(V)Q_2'(V).
\end{equation*}

Note that if \(M\) is a \(\text{Spin}\)-manifold,
then there is a canonical \(\text{Spin}^c\)-structure on \(M\).
With this \(\text{Spin}^c\)-structure \(\varphi^c(M;0,TM)\) is equal to the elliptic genus of \(M\).
Moreover, \(\varphi^c(M;0,0)\) is the Witten-genus of \(M\).

Dessai proved the following theorem.
\begin{theorem}[{\cite[Theorem 3.2, p. 243]{0944.58019}}]
\label{sec:twist-dirac-oper-9}
  Assume that the equivariant Pontrjagin-class \(p_1^{S^1}(V+W-TM)\) restricted to \(M^{S^1}\) is equal to \(\pi_{S^1}^*(Ix^2)\) modulo torsion, where \(\pi_{S^1}:BS^1\times M^{S^1} \rightarrow BS^1\) is the projection on the first factor, \(x\in H^2(BS^1;\mathbb{Z})\) is a generator and \(I\) is an integer.
Assume, moreover, that \(c_1^c(M)\) and \(c_1(V)\) are equal modulo torsion.
If \(I<0\), then \(\varphi^c(M;V,W)_{S^1}\) vanishes identically. 
\end{theorem}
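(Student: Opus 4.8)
The approach I would take follows the modular-invariance method of Bott--Taubes and Liu, adapted to the \(\text{Spin}^c\)-setting. Write \(t=e^{2\pi i z}\) and \(q=e^{2\pi i\tau}\) with \(z\in\C\) and \(\tau\) in the upper half plane \(\mathbb{H}\). By the Atiyah--Segal--Singer Lefschetz fixed point formula, \(\varphi^c(M;V,W)_{S^1}\) can be written as a finite sum \(\Phi(z,\tau)=\sum_F\Phi_F(z,\tau)\) over the connected components \(F\) of \(M^{S^1}\), where \(\Phi_F\) is the integral over \(F\) of a characteristic class built from the \(S^1\)-weight decompositions of \(TM|_F\), \(V|_F\) and \(W|_F\), divided by the equivariant Euler class of the normal bundle. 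Using the product expansions of \(Q_1\), \(Q_2'\) and \(Q_3\), each factor can be rewritten in terms of the Jacobi theta functions \(\theta,\theta_1,\theta_2,\theta_3\) and the Dedekind \(\eta\), so that each \(\Phi_F\) becomes an explicit meromorphic function on \(\C\times\mathbb{H}\).

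Two properties of \(\Phi\) are then decisive. First, since \(\varphi^c(M;V,W)_{S^1}\) lies in \(R(S^1)[[q]]\), every coefficient of a power of \(q\) in \(\Phi\) is an honest Laurent polynomial in \(t\); hence \(\Phi\) is in fact \emph{holomorphic} on \(\C\times\mathbb{H}\) and its \(q\)-expansion involves only non-negative powers of \(q\), so the poles in \(z\) of the individual summands \(\Phi_F\) must cancel. The same holds for the finitely many ``companion'' functions obtained by replacing \(z\) by \(z\) plus a torsion point of the elliptic curve \(\C/(\mathbb{Z}+\mathbb{Z}\tau)\) and by exchanging the two half-spinor components of \(\Delta(W)\), since these too are equivariant indices of genuine operators on \(M\). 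Second, I would compute the behaviour of \(\Phi\) and its companions under the generators \(\tau\mapsto\tau+1\) and \(\tau\mapsto-1/\tau\) of \(SL_2(\mathbb{Z})\), working over the congruence subgroup forced by the \(\Lambda_{-q^k}(\tilde V\otimes_\R\C)\)- and \(\Lambda_{q^k}(\tilde W\otimes_\R\C)\)-factors (one can take \(\Gamma_0(2)\)). Via the classical transformation formulas for \(\theta_i(z,\tau)\), one finds that \(\Phi_F\bigl(\tfrac{z}{c\tau+d},\tfrac{a\tau+b}{c\tau+d}\bigr)\) equals \((c\tau+d)^k\, e^{2\pi i c z^2 E_F/(c\tau+d)}\) times a \(\Phi\)-type expression for \(F\), where the exponent \(E_F\), the linear-in-\(z\) part of the automorphy factor, and the holomorphy of the base contribution are governed respectively by the coefficients of \(x^2\), \(x\) and \(x^0\) in the expansion of \(p_1^{S^1}(V+W-TM)|_F\).

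This is precisely where the hypotheses enter. The assumption \(p_1^{S^1}(V+W-TM)|_{M^{S^1}}=\pi_{S^1}^*(Ix^2)\) modulo torsion forces the \(x^0\)- and \(x\)-coefficients to vanish on every component \(F\), so that the base contributions are honestly modular and the linear automorphy terms disappear, and it forces \(E_F=I\) for \emph{every} \(F\) simultaneously; the assumption \(c_1^c(M)\equiv c_1(V)\) modulo torsion lets us rewrite \(e^{c_1^c(M)/2}Q_2(V)=e(V)Q_2'(V)\), which makes the fixed-point expressions integral and their transformation uniform across components. Summing over \(F\) --- where the cancellation of poles noted above is essential, since the transformation law is valid only for the holomorphic total sum --- and tracking the permutation of the companion functions, one concludes that \(\Phi\) is a \emph{holomorphic} Jacobi form of some weight \(k\) and index a positive rational multiple of \(I\) for \(\Gamma_0(2)\), holomorphic also at the cusps because of the non-negative \(q\)-expansions. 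Since there are no non-zero holomorphic Jacobi forms of negative index --- for each fixed \(\tau\) the quasi-periodicity in \(z\) would force \(\Phi(\cdot,\tau)\) to be a holomorphic section of a negative-degree line bundle on \(\C/(\mathbb{Z}+\mathbb{Z}\tau)\), hence zero --- the hypothesis \(I<0\) yields \(\Phi\equiv 0\), i.e.\ \(\varphi^c(M;V,W)_{S^1}\) vanishes identically.

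The main obstacle is the bookkeeping in the two middle paragraphs: one must show that the \(e^{2\pi i c z^2/(c\tau+d)}\)-automorphy factors produced separately by the \(S_{q^k}(\tilde{TM}\otimes_\R\C)\)-part, by the Euler-class-twisted \(\Lambda_{-q^k}\)-part, and by the spinorial \(\Lambda_{q^k}(\tilde W\otimes_\R\C)\)-part combine --- with the correct signs and, crucially, with one and the same constant \(E_F=I\) on every fixed-point component --- into the single factor dictated by \(I\), while simultaneously handling that the individual fixed-point contributions are only meromorphic (so the transformation law holds only after summation), identifying the correct congruence subgroup together with the finite family of companion functions permuted by it, and verifying holomorphy at all of its cusps. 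The corresponding rigidity statement (the case \(I=0\): \(\Phi\) is independent of \(z\), so \(\varphi^c(M;V,W)_{S^1}\) equals its non-equivariant value) would drop out of the same computation, but is not needed here.
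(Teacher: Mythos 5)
The paper does not give its own proof of this statement: it is quoted verbatim from Dessai's ``Rigidity theorems for $\text{Spin}^c$-manifolds'' (reference \cite{0944.58019}) and applied as a black box throughout Section~\ref{sec:twisted}. Your outline is in fact a sketch of the argument that Dessai gives in that reference, and it has the right global shape: Lefschetz fixed-point formula to localize the equivariant index, rewriting the fixed-point contributions via theta and eta functions, extracting a Jacobi-form transformation law over $\Gamma_0(2)$ whose automorphy ``anomaly'' is controlled by $p_1^{S^1}(V+W-TM)$, using holomorphy of the total index (as opposed to the individual meromorphic fixed-point contributions) together with the companion indices, and finally invoking that a holomorphic Jacobi form of negative index is identically zero. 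Using the hypothesis $c_1^c(M)\equiv c_1(V)$ to pass from $e^{c_1^c(M)/2}Q_2(V)$ to $e(V)Q_2'(V)$ is also where it should be. So this is the same approach as the cited source, and I have no objection in principle.

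Two points deserve flagging, since they are where the ``bookkeeping'' you defer is genuinely load-bearing rather than routine. First, the holomorphy of $\Phi$ in $z$ is not automatic from $\varphi^c(M;V,W)_{S^1}\in R(S^1)[[q]]$ alone: a priori each $q$-coefficient is a finite $\mathbb{Z}$-linear combination of $S^1$-characters, i.e.\ a Laurent polynomial in $t=e^{2\pi i z}$, which is holomorphic on $\C^\times$ but may blow up as $t\to 0$ or $t\to\infty$; what one actually needs, and what Liu/Dessai prove, is control at $t=0,\infty$ (via the bound on the $S^1$-weights appearing in the fixed-point data) plus the cancellation of the theta-zero poles among the $\Phi_F$. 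Second, the passage from ``$p_1^{S^1}(V+W-TM)|_{M^{S^1}}=\pi_{S^1}^*(Ix^2)$ modulo torsion'' to ``$E_F=I$ on every component $F$'' silently uses that $H^*(BS^1\times F;\Q)$ is torsion-free so the modulo-torsion hypothesis becomes an honest equality rationally; and the sign convention relating the $x^2$-coefficient to the Jacobi-form index $m$ (so that $I<0\Rightarrow m<0$) has to be fixed consistently with the chosen orientations and $\Delta(W)=\Delta^+(W)\oplus\Delta^-(W)$. Neither of these is an error in your proposal, but they are precisely the places where a careless version of this argument would fail, so they should be verified rather than absorbed into ``bookkeeping.''
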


Let \(G\) be a compact Lie-group such that:
\begin{enumerate}
\item \label{item:1} There is an exact sequence of Lie-groups
  \begin{equation*}
    1 \rightarrow T \rightarrow G\rightarrow W(G)\rightarrow 1,
  \end{equation*}
where \(T\) is a torus and \(W(G)\) a finite group.
\item\label{item:2} If condition (\ref{item:1}) holds, then \(G\) acts by conjugation on \(T\).
Since \(T\) is abelian this action factors through \(W(G)\). We assume that this action of \(W(G)\) is non-trivial on \(T\).
\end{enumerate}

An action of \(G\) on a manifold \(M\) is called nice if \(G\) acts almost effectively on \(M\) and if the induced action on \(H^*(M;\mathbb{Z})\) is trivial.

For nice \(G\)-actions on \(\text{Spin}^c\)-manifolds we have the following vanishing result.

\begin{theorem}
\label{sec:twist-dirac-oper-8}
  Let \(M\) be a \(\text{Spin}^c\)-manifold on which \(G\) acts nicely such that \(M^G\neq \emptyset\).
  Let \(V\) and \(W\) be sums of complex line bundles over \(M\) such that \(W\) is \(\text{Spin}\), \(c_1(V)=c_1^c(M)\) modulo torsion and \(p_1(V+W-TM)=0\) modulo torsion. 
Assume that \(b_1(M)=0\) or that the \(G\)-action on \(M\) extends to an action of a simply connected compact Lie-group. 
Then \(\varphi^c(M;V,W)\) vanishes.
\end{theorem}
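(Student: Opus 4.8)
The plan is to reduce the assertion to the $S^1$-equivariant vanishing result Theorem~\ref{sec:twist-dirac-oper-9}, applied to a suitable circle subgroup of $G$. Concretely, I would look for a circle $S^1\subseteq T\subseteq G$, a lift of its action to the $\text{Spin}^c$-structure, and $S^1$-equivariant structures on $V$ and $W$ (compatible with the $\text{Spin}$-structure on $W$), such that the hypotheses of Theorem~\ref{sec:twist-dirac-oper-9} hold with a \emph{negative} integer $I$. Since the non-equivariant index $\varphi^c(M;V,W)$ depends only on the underlying non-equivariant data, and is obtained from $\varphi^c(M;V,W)_{S^1}$ by evaluating at the trivial character, the identical vanishing of the latter provided by that theorem would give $\varphi^c(M;V,W)=0$.

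The first task is the lifting. Using the hypothesis that $b_1(M)=0$ or that the action extends to a simply connected group, together with Lemma~\ref{sec:twist-dirac-oper-12} and the technical results of appendix~\ref{sec:tori}, the $T$-action on $M$ lifts into the $\text{Spin}^c$-structure and into each line bundle summand of $V$ and of $W$; restricting to a subcircle $S^1_\lambda$, $\lambda\in\Lambda:=\Hom(S^1,T)$, then supplies the required $S^1$-equivariant data. The point of the hypotheses $p_1(V+W-TM)=0$ modulo torsion together with $b_1(M)=0$ or simply-connectedness is that, after a normalisation of these equivariant structures, the equivariant Pontrjagin class $p_1^{S^1_\lambda}(V+W-TM)$ restricted to $M^{S^1_\lambda}$ has the shape $\pi^*_{S^1_\lambda}(\mathcal I(\lambda)x^2)$ modulo torsion for a well-defined integer $\mathcal I(\lambda)$, and $\lambda\mapsto\mathcal I(\lambda)$ is a quadratic form on $\Lambda$; this is proved by analysing the Serre spectral sequence of $M\to M_{S^1_\lambda}\to BS^1_\lambda$, the relevant computations belonging in appendix~\ref{sec:tori}.

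Next I would use $M^G\neq\emptyset$. Fixing $p\in M^G$ and restricting $p_1^{S^1_\lambda}(V+W-TM)$ further to $p$ identifies $\mathcal I(\lambda)$ with $\sum\langle\mu,\lambda\rangle^2-\sum\langle\nu,\lambda\rangle^2$, the first sum over the weights of the $T$-representations $V_p$ and $W_p$ and the second over those of $T_pM$. Because the $G$-action is nice, any element of $G$ mapping to $w\in W(G)$ permutes the weight spaces of each of these representations according to the action of $w$ on $\Hom(T,S^1)$, so each weight multiset is $W(G)$-invariant and hence $\mathcal I$ extends to a $W(G)$-invariant quadratic form on $\Lambda\otimes\R$.

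The remaining step, which I expect to be the main obstacle, is to show that $\mathcal I$ is negative on some $\lambda\in\Lambda$. Suppose not; then $\varphi^c(M;V,W)_{S^1_\lambda}\neq 0$ for the circles that lift, so Theorem~\ref{sec:twist-dirac-oper-9} forces $\mathcal I(\lambda)\geq 0$ for all $\lambda$ in a finite-index sublattice of $\Lambda$, whence $\mathcal I$ is positive semidefinite. This is where the non-triviality of the $W(G)$-action on $T$ enters: a positive semidefinite $W(G)$-invariant quadratic form of the special shape above cannot occur if $\varphi^c(M;V,W)\neq 0$. For $G=\text{Pin}(2)$, with $W(G)=\mathbb{Z}/2$ acting by inversion, this is precisely the computation of Dessai~\cite{0963.19002}, which plays the constraint $c_1(V)=c_1^c(M)$ modulo torsion off against the symmetry $\mu\mapsto-\mu$ of the weight data at $p$. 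In general I would first reduce, by replacing $T$ with a $w$-invariant subtorus on which some non-trivial $w\in W(G)$ acts non-trivially, to the case where $W(G)$ is finite cyclic acting non-trivially, and then adapt Dessai's argument, treating the borderline case $\mathcal I\equiv 0$ by means of the rigidity of $\varphi^c(M;V,W)_{S^1_\lambda}$ accompanying Theorem~\ref{sec:twist-dirac-oper-9}, together with the $W(G)$-equivariance of $\varphi^c(M;V,W)_T$ and localisation at $p$. Pinning down this last reduction is the crux of the proof.
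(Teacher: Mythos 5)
Your framework is essentially the right one: reduce first to cyclic $W(G)$ acting non-trivially on a subtorus, lift the torus action to the $\text{Spin}^c$-structure and to each line bundle summand, use a $G$-fixed point to write $\mathcal I(\lambda)$ as a difference of sums of squares of weights, and then invoke Theorem~\ref{sec:twist-dirac-oper-9}. But you have not identified the observation that actually closes the argument, and the last step as you describe it would not go through.

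You argue that the weight \emph{multisets} of $V_p$, $W_p$, $T_pM$ are each $W(G)$-invariant, and then try to rule out $\mathcal I$ being positive semidefinite by an indirect argument involving ``rigidity'' and a hoped-for adaptation of Dessai's computation. The paper does something much more direct. After passing (via the appendix) to the case $T^{W(G)}$ finite, i.e.\ $(LT^*)^{W(G)}=0$: each line bundle $L_i$ carries its own lift of the $G$-action, so the fiber $L_i|_p$ at a $G$-fixed point $p$ is a \emph{one-dimensional representation of $G$}, not merely a summand of a $G$-representation. Its restriction to $T$ is therefore an \emph{individually} $W(G)$-invariant character of $T$ --- not just a member of a $W(G)$-stable multiset --- hence lies in $(LT^*)^{W(G)}=0$ and is identically zero. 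Consequently all the $\alpha_i,\beta_i$ in your formula vanish, and $\mathcal I(\lambda)=-\sum\langle\gamma_i,\lambda\rangle^2$ is manifestly $\leq 0$; it is $<0$ for any $\lambda$ whose circle acts non-trivially, because $S^1_\lambda$ having a fixed point and acting non-trivially forces a non-zero tangent weight $\gamma_i$ there. So $I<0$ in Theorem~\ref{sec:twist-dirac-oper-9} is established outright, with no contradiction argument, no discussion of a borderline $\mathcal I\equiv 0$, and no separate appeal to rigidity. The hypothesis $c_1(V)=c_1^c(M)$ modulo torsion is only needed to put you in the scope of Theorem~\ref{sec:twist-dirac-oper-9}; it plays no further role against the $W(G)$-symmetry.

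Two smaller remarks. The claim that $p_1^{S^1}(V+W-TM)$ equals $a\pi^*_{S^1}(x^2)$ rationally is obtained from the exactness of $H^4(BG;\Q)\to H^4_G(M;\Q)\to H^4(M;\Q)$ (Lemma~\ref{sec:twist-dirac-oper-1}, which needs $T^{W(G)}$ finite to kill $H^{1,2,3}(BG;\Q)$), together with $p_1(V+W-TM)=0$ mod torsion; the appendix (section~\ref{sec:tori}) is not where that is done --- it only supplies the splitting $T=\langle T_1,T_2\rangle$ used in the reduction to $T^{W(G)}$ finite. And the lifting of the action into the line bundles is supplied by Lemmas~\ref{sec:twist-dirac-oper} and~\ref{sec:twist-dirac-oper-17} (this is exactly where $M^G\neq\emptyset$ and the $b_1(M)=0$ or simply-connectedness alternative are used), so you should route the lifting through those rather than through Lemma~\ref{sec:twist-dirac-oper-12} alone.
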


\begin{remark}
  Theorem \ref{sec:twist-dirac-oper-8} is a generalization of Theorem 4.4 of \cite[p. 521]{0963.19002}.
\end{remark}

Before we prove Theorem~\ref{sec:twist-dirac-oper-8} we state three lemmas about the equivariant cohomology of \(G\)-manifolds which are needed in the proof.

\begin{lemma}
\label{sec:twist-dirac-oper}
  Let \(M\) be a nice \(G\)-manifold such that \(M^G\neq \emptyset\) and \(b_1(M)=0\). Then 
  \begin{equation*}
    0 \rightarrow H^2(BG;\mathbb{Z})\rightarrow H^2_G(M;\mathbb{Z})\rightarrow H^2(M;\mathbb{Z})\rightarrow 0
  \end{equation*}
is exact.
\end{lemma}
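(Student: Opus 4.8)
\emph{Plan.} The natural tool is the Serre spectral sequence of the Borel fibration
\(M \hookrightarrow M_G = EG\times_G M \xrightarrow{\pi} BG\), and the claim should fall out of the filtration on \(H^2_G(M;\mathbb{Z})=H^2(M_G;\mathbb{Z})\) in total degree \(2\). Since the \(G\)-action is nice, \(G\) acts trivially on \(H^*(M;\mathbb{Z})\), so the coefficients are untwisted and \(E_2^{p,q}=H^p(BG;H^q(M;\mathbb{Z}))\) (all spaces may be taken connected, so \(H^0\) is \(\mathbb{Z}\)). First I would record the vanishing of the relevant low-degree terms. Because \(G^0=T\) is connected, \(\pi_1(BG)=\pi_0(G)=W(G)\) is finite, hence \(H^1(BG;\mathbb{Z})=0\) and \(E_2^{1,0}=0\). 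Because \(H^1(M;\mathbb{Z})\cong\Hom(H_1(M;\mathbb{Z}),\mathbb{Z})\) is torsion-free and \(b_1(M)=0\), we get \(H^1(M;\mathbb{Z})=0\), hence \(E_2^{0,1}=E_2^{1,1}=E_2^{2,1}=0\).

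With this, the \(E_2\)-page in total degree \(2\) consists only of \(E_2^{2,0}=H^2(BG;\mathbb{Z})\) and \(E_2^{0,2}=H^2(M;\mathbb{Z})\), with \(E_2^{1,1}=0\). The differential \(d_2\colon E_2^{0,1}\to E_2^{2,0}\) has zero source and \(d_2\colon E_2^{0,2}\to E_2^{2,1}\) has zero target, so \(E_3^{2,0}=E_2^{2,0}\) and \(E_3^{0,2}=E_2^{0,2}\); likewise \(E_3^{3,0}=E_2^{3,0}=H^3(BG;\mathbb{Z})\) since the only incoming \(d_2\) to that spot comes from \(E_2^{1,1}=0\). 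Consequently the only differential that can still act in this range is \(d_3\colon E_3^{0,2}\to E_3^{3,0}\), and the whole statement reduces to showing this \(d_3\) vanishes.

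Here is where the hypothesis \(M^G\neq\emptyset\) enters: choosing \(x\in M^G\), the inclusion \(EG\times_G\{x\}\to M_G\) is a section \(s\) of \(\pi\), so \(s^*\circ\pi^*=\id\) on \(H^*(BG;\mathbb{Z})\) and \(\pi^*\) is split injective. Under the edge homomorphism \(H^*(BG;\mathbb{Z})=E_2^{*,0}\twoheadrightarrow E_\infty^{*,0}\hookrightarrow H^*_G(M;\mathbb{Z})\), which is exactly \(\pi^*\), injectivity forces \(E_\infty^{p,0}=E_2^{p,0}\) for all \(p\); taking \(p=3\) gives \(E_\infty^{3,0}=E_3^{3,0}\), i.e. \(d_3\colon E_3^{0,2}\to E_3^{3,0}\) is zero. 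Therefore \(E_\infty^{0,2}=E_3^{0,2}=H^2(M;\mathbb{Z})\), \(E_\infty^{2,0}=H^2(BG;\mathbb{Z})\), and \(E_\infty^{1,1}=0\), so the three-step filtration of \(H^2_G(M;\mathbb{Z})\) collapses to the short exact sequence \(0\to E_\infty^{2,0}\to H^2_G(M;\mathbb{Z})\to E_\infty^{0,2}\to 0\), which is the asserted sequence with first map \(\pi^*\) and second map the restriction \(H^2_G(M;\mathbb{Z})\to H^2(M;\mathbb{Z})\) to the fiber.

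Regarding difficulty: everything is spectral-sequence bookkeeping once the two inputs are in place — triviality of the coefficient system (niceness) and the section (fixed point). The only genuinely non-formal step is the vanishing of \(d_3\colon E_3^{0,2}\to E_3^{3,0}\), and the cleanest argument is the section/edge-homomorphism one above (equivalently: a section makes the bottom row of the spectral sequence consist of permanent cycles), rather than any direct computation with \(H^3(BG;\mathbb{Z})\). The point I would be most careful to verify is the standard identification of the two edge homomorphisms — that \(E_2^{*,0}\to E_\infty^{*,0}\hookrightarrow H^*_G(M;\mathbb{Z})\) is \(\pi^*\) and that \(H^*_G(M;\mathbb{Z})\to E_\infty^{0,*}\hookrightarrow H^*(M;\mathbb{Z})\) is the fiber restriction — since the description of the two maps in the exact sequence rests on it.
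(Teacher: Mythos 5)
Your argument is essentially the paper's own: you run the Serre spectral sequence of the Borel fibration with untwisted coefficients (using niceness), kill the relevant terms on the $q=1$ line via $b_1(M)=0$, and use the section coming from a point of $M^G$ to force the bottom row $E_2^{*,0}$ to consist of permanent cycles, so that $d_3\colon E_3^{0,2}\to E_3^{3,0}$ vanishes. Your write-up is slightly more explicit about the edge homomorphisms and about $H^1(BG;\mathbb{Z})=0$, but the structure of the proof is the same as in the paper.
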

\begin{proof}
  We consider the Serre-spectral sequence for the fibration \(M\rightarrow M_G\rightarrow BG\). 
Because the \(G\)-action on \(M\) is nice we have
\begin{equation*}
  E_2^{p,q}=H^p(BG;H^q(M;\mathbb{Z})).
\end{equation*}
Since \(b_1(M)=0\), we have \(E_\infty^{1,1}=0\). 
Therefore we have an exact sequence
\begin{equation*}
  0\rightarrow E_\infty^{2,0}\rightarrow H^2_G(M;\mathbb{Z})\rightarrow E_\infty^{0,2}\rightarrow 0.
\end{equation*}
Because \(M^{G}\neq \emptyset\), \(H^*(BG;\mathbb{Z})\rightarrow H_G^*(M;\mathbb{Z})\) is injective.
Hence, we have 
\begin{equation*}
H^*(BG;\mathbb{Z})=E_2^{*,0}=E_\infty^{*,0}  
\end{equation*}
 and the differentials \(d_r:E_r^{*-r,r-1}\rightarrow E_r^{*,0}\) vanish.

It remains to show that \(E_\infty^{0,2}=E_2^{0,2}=H^2(M;\mathbb{Z})\).
That is equivalent to \(d_r:E_r^{0,2}\rightarrow E_r^{r,3-r}\) vanishes for all \(r\).

Now we have \(E_2^{2,1}=0\) because \(b_1(M)=0\).
Therefore \(d_2\) vanishes.
Since there are \(G\)-fixed points in \(M\), \(d_3\) vanishes.
The differentials \(d_r\), \(r>3\), vanish because \(E_r^{r,3-r}=0\) for \(r>3\).
Therefore the statement follows.
\end{proof}

\begin{lemma}
\label{sec:twist-dirac-oper-17}
  Let \(M\) be a nice \(G\)-manifold. If the \(G\)-action on \(M\) extends to an action of a simply connected compact Lie-group \(\hat{G}\), then the natural map
  \begin{equation*}
    H_{\hat{G}}^2(M;\mathbb{Z})\rightarrow H^2(M;\mathbb{Z})
  \end{equation*}
is an isomorphism. Moreover, \(H_{{G}}^2(M;\mathbb{Z})\rightarrow H^2(M;\mathbb{Z})\) is surjective.
\end{lemma}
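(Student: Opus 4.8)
The plan is to compute $H^2_{\hat G}(M;\mathbb{Z})$ via the Serre spectral sequence of the Borel fibration $M\to M_{\hat G}\to B\hat G$, using that $\hat G$ is compact, connected and simply connected, so that $H^i(B\hat G;\mathbb{Z})=0$ for $i=1,2,3$ (equivalently $\pi_1(B\hat G)=\pi_2(B\hat G)=\pi_3(B\hat G)=0$, where $\pi_3(B\hat G)=\pi_2(\hat G)=0$ because $\hat G$ is a Lie group). Since $\hat G$ is connected, the coefficient system on $B\hat G$ with fibre $H^q(M;\mathbb{Z})$ is trivial, so $E_2^{p,q}=H^p(B\hat G;H^q(M;\mathbb{Z}))$.

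First I would record the vanishing of all entries on the line $p+q=2$ except $E_2^{0,2}=H^2(M;\mathbb{Z})$: indeed $E_2^{2,0}=H^2(B\hat G;\mathbb{Z})=0$ and $E_2^{1,1}=H^1(B\hat G;H^1(M;\mathbb{Z}))=0$ because $H_1(B\hat G)=0$. Next I would show $E_\infty^{0,2}=E_2^{0,2}$: the differentials leaving $E_r^{0,2}$ are $d_2$ into $E_2^{2,1}=H^2(B\hat G;H^1(M;\mathbb{Z}))=0$ (using $H_1(B\hat G)=H_2(B\hat G)=0$) and $d_3$ into a subquotient of $E_2^{3,0}=H^3(B\hat G;\mathbb{Z})=0$, while $d_r$ for $r\geq 4$ lands in $E_r^{r,3-r}=0$; and there are no incoming differentials. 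Consequently $H^2_{\hat G}(M;\mathbb{Z})$ carries a filtration whose only nonzero subquotient is $E_\infty^{0,2}=H^2(M;\mathbb{Z})$, so the edge homomorphism $H^2_{\hat G}(M;\mathbb{Z})\to H^2(M;\mathbb{Z})$ — which is precisely the restriction along the fibre inclusion — is an isomorphism. This settles the first assertion.

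For the \emph{moreover} part, I would use naturality of the Borel construction in the group: the homomorphism $G\to\hat G$ through which the $G$-action factors induces a map $M_G\to M_{\hat G}$ covering $BG\to B\hat G$ that restricts to the identity on the common fibre $M$. Hence the restriction maps to $H^2(M;\mathbb{Z})$ from $H^2_G(M;\mathbb{Z})$ and from $H^2_{\hat G}(M;\mathbb{Z})$ fit into a commuting triangle with $H^2_{\hat G}(M;\mathbb{Z})\to H^2_G(M;\mathbb{Z})$. Since $H^2_{\hat G}(M;\mathbb{Z})\to H^2(M;\mathbb{Z})$ is onto by the first part, so is $H^2_G(M;\mathbb{Z})\to H^2(M;\mathbb{Z})$.

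The main obstacle — such as it is — is making sure the input on $B\hat G$ is correct: the argument genuinely needs $H^i(B\hat G;\mathbb{Z})=0$ for $i=1,2,3$, which is where simple connectedness of $\hat G$ (rather than mere connectedness) is essential. Everything else is formal manipulation of the spectral sequence together with its naturality in the group; note that, unlike in Lemma~\ref{sec:twist-dirac-oper}, no fixed-point hypothesis on $M$ is required here, precisely because the low-degree integral cohomology of $B\hat G$ already vanishes.
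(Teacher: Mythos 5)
Your proof is correct and takes essentially the same route as the paper: the paper also invokes the three-connectedness of $B\hat{G}$, runs the Serre spectral sequence of $M\to M_{\hat G}\to B\hat G$ (referring back to the computation in Lemma~\ref{sec:twist-dirac-oper}), and deduces the surjectivity statement by factoring $H^2_{\hat G}(M;\mathbb{Z})\to H^2(M;\mathbb{Z})$ through $H^2_G(M;\mathbb{Z})$. You have simply spelled out the spectral-sequence bookkeeping that the paper leaves implicit, including the observation that no fixed-point hypothesis is needed here.
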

\begin{proof}
  Since \(B\hat{G}\) is three-connected the first statement follows from an inspection of the Serre spectral sequence for the fibration \(M\rightarrow M_{\hat{G}}\rightarrow B\hat{G}\) as in the proof of Lemma~\ref{sec:twist-dirac-oper}.
Then the second statement follows because \(H_{\hat{G}}^2(M;\mathbb{Z})\rightarrow H^2(M;\mathbb{Z})\) factors through \(H_G^2(M;\mathbb{Z})\).
\end{proof}

\begin{lemma}
\label{sec:twist-dirac-oper-1}
Assume that \(T^{W(G)}\) is finite or equivalently that \(\dim (LT)^{W(G)}=0\) or  \(\dim (LT^*)^{W(G)}=0\).
  Let \(M\) be a nice \(G\)-manifold, then
  \begin{equation*}
    H^4(BG;\Q)\rightarrow H_G^4(M;\Q)\rightarrow H^4(M;\Q)
  \end{equation*}
is exact.
\end{lemma}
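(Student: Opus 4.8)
The plan is to run the Serre spectral sequence of the Borel fibration $M\to M_G\to BG$ with rational coefficients and to read the claim off its two edge homomorphisms. Since the $G$-action on $M$ is nice, $\pi_1(BG)=\pi_0(G)=W(G)$ acts trivially on $H^*(M;\Q)$, so $E_2^{p,q}=H^p(BG;\Q)\otimes_\Q H^q(M;\Q)$, and the two maps in the statement are exactly the edge homomorphisms $H^4(BG;\Q)=E_2^{4,0}\to H^4_G(M;\Q)$ and $H^4_G(M;\Q)\to E_2^{0,4}=H^4(M;\Q)$. Their composite is zero, so the inclusion ``image $\subseteq$ kernel'' holds automatically and only the reverse inclusion needs proof; this will follow once we control $H^i(BG;\Q)$ for $i\le 3$.

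First I would compute these low-degree groups using the fibration $BT\to BG\to BW(G)$ attached to the defining extension $1\to T\to G\to W(G)\to 1$. Since $W(G)$ is finite, $BW(G)$ is rationally acyclic, so this spectral sequence collapses and $H^*(BG;\Q)\cong H^*(BT;\Q)^{W(G)}$. As $H^*(BT;\Q)$ is a polynomial algebra on degree-$2$ generators, this is concentrated in even degrees, so $H^1(BG;\Q)=H^3(BG;\Q)=0$; in degree $2$ it gives $H^2(BG;\Q)\cong (H^2(BT;\Q))^{W(G)}$, and since $H^2(BT;\Q)$ is a rational form of the $W(G)$-representation $LT^*$, this space has dimension $\dim(LT^*)^{W(G)}$, which vanishes by hypothesis. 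Hence $H^i(BG;\Q)=0$ for $i=1,2,3$, and therefore every $E_2^{p,q}$ with $1\le p\le 3$ is zero; in particular $E_2^{1,3}=E_2^{2,2}=E_2^{3,1}=0$.

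It then remains to assemble the filtration of $H^4_G(M;\Q)$. Its associated graded pieces are subquotients of $E_2^{p,4-p}$, so the three interior ones $E_\infty^{1,3},E_\infty^{2,2},E_\infty^{3,1}$ vanish, leaving a short exact sequence $0\to E_\infty^{4,0}\to H^4_G(M;\Q)\to E_\infty^{0,4}\to 0$. The image of the first edge homomorphism is exactly the bottom filtration step, i.e.\ this copy of $E_\infty^{4,0}$, while the kernel of the second edge homomorphism is the first filtration step $F^1H^4_G(M;\Q)$; because the interior subquotients are trivial, $F^1=F^4=E_\infty^{4,0}$, so image equals kernel and the sequence is exact. (Note that, unlike in Lemma~\ref{sec:twist-dirac-oper}, no assumption $M^G\ne\emptyset$ is needed, since we never require $H^4(BG;\Q)\to H^4_G(M;\Q)$ to be injective.) I do not anticipate a real obstacle: the only genuine input is the vanishing $H^2(BG;\Q)=0$, which is precisely where the assumption $\dim(LT^*)^{W(G)}=0$ is used, and the rest is bookkeeping — matching the two maps of the statement with the edge homomorphisms and checking that the differentials cannot revive the interior $E_\infty$-terms, which they cannot since those already vanish at $E_2$.
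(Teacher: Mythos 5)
Your argument is correct and is essentially the paper's: both run the Serre spectral sequence of $M\to M_G\to BG$, reduce the claim to $H^i(BG;\Q)=0$ for $i=1,2,3$, and then read off exactness from the short exact sequence $0\to E_\infty^{4,0}\to H^4_G(M;\Q)\to E_\infty^{0,4}\to 0$. The only difference is that the paper cites Borel (Proposition 20.4 of the \emph{Seminar on Transformation Groups}) for the vanishing of $H^i(BG;\Q)$ in degrees $1$--$3$, whereas you derive it directly from the fibration $BT\to BG\to BW(G)$ and the identification $H^*(BG;\Q)\cong H^*(BT;\Q)^{W(G)}$; your observation that $M^G\ne\emptyset$ is not needed here is also accurate.
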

\begin{proof}
  Because \(\dim (LT^*)^{W(G)}=0\), we have by Proposition 20.4 of \cite[p. 68]{0158.20503} \(H^i(BG;\Q)=0\) for \(i=1,2,3\).
Therefore from the Serre spectral sequence of the fibration \(M\rightarrow M_G\rightarrow BG\) we get an exact sequence
\begin{equation*}
  0\rightarrow E_\infty^{4,0}\rightarrow H_G^4(M;\Q)\rightarrow E_\infty^{0,4}\rightarrow 0.
\end{equation*}
Since \(H^4(BG;\Q)\) surjects to \(E_\infty^{4,0}\) and \(E_\infty^{0,4}\) injects into \(H^4(M;\Q)\), the statement follows.
\end{proof}

Now we are ready to prove Theorem~\ref{sec:twist-dirac-oper-8} in two special cases.
The general case will follow from these special cases.

\begin{lemma}
\label{sec:twist-dirac-oper-2}
Assume that \(T^{W(G)}\) is finite. Then Theorem~\ref{sec:twist-dirac-oper-8} holds.
\end{lemma}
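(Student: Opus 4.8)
The plan is to reduce the assertion to Dessai's $S^1$-equivariant vanishing result, Theorem~\ref{sec:twist-dirac-oper-9}, by restricting the $G$-action to a suitably chosen circle subgroup $S^1\subset T$. First I would make all of the data equivariant. By Lemma~\ref{sec:twist-dirac-oper} (when $b_1(M)=0$) or Lemma~\ref{sec:twist-dirac-oper-17} (when the $G$-action extends to a simply connected compact Lie-group) the homomorphism $H^2_G(M;\mathbb{Z})\to H^2(M;\mathbb{Z})$ is surjective; hence the first Chern classes of the line-bundle summands of $V$ and $W$ and of the line bundle associated to the Spin$^c$-structure all lie in its image, so $V$, $W$ and the Spin$^c$-line bundle carry $G$-equivariant structures with unchanged underlying bundles, and by Lemma~\ref{sec:twist-dirac-oper-12} the $S^1$-action lifts into the Spin$^c$-structure. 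Since the index $\varphi^c(M;V,W)$ depends only on rational characteristic data, it is obtained from an $S^1$-equivariant index $\varphi^c(M;V,W)_{S^1}\in R(S^1)[[q]]$ by the forgetful homomorphism $R(S^1)\to\mathbb{Z}$; it therefore suffices to produce a circle $S^1\subset T$ for which $\varphi^c(M;V,W)_{S^1}$ vanishes identically.

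Next I would locate the equivariant Pontryagin class. The class $p_1^G(V+W-TM)\in H^4_G(M;\Q)$ maps to $p_1(V+W-TM)=0$ in $H^4(M;\Q)$, so by the exactness established in Lemma~\ref{sec:twist-dirac-oper-1} it is the pullback $\pi_G^*\rho$ of a class $\rho\in H^4(BG;\Q)$ along $\pi_G\colon M_G\to BG$; by Borel's theorem $\rho$ is a $W(G)$-invariant quadratic form on $LT$. For any circle $S^1\subset T$, restriction along $BS^1\to BG$ sends $\rho$ to $I_{S^1}x^2$ with $I_{S^1}\in\mathbb{Z}$ the value of $\rho$ on the cocharacter defining $S^1$, so $p_1^{S^1}(V+W-TM)=\pi_{S^1}^*(I_{S^1}x^2)$, and in particular its restriction to $M^{S^1}$ is $\pi_{S^1}^*(I_{S^1}x^2)$ modulo torsion. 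Together with the hypothesis $c_1^c(M)=c_1(V)$ modulo torsion this is precisely the hypothesis of Theorem~\ref{sec:twist-dirac-oper-9}. Hence, if $I_{S^1}<0$ for some circle $S^1\subset T$, then $\varphi^c(M;V,W)_{S^1}$ vanishes identically, and therefore $\varphi^c(M;V,W)=0$.

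It remains to rule out the possibility that the quadratic form $\rho$ is positive semidefinite on $LT$, and this is where $M^G\neq\emptyset$ and the finiteness of $T^{W(G)}$ come in. Restricting $p_1^G(V+W-TM)=\pi_G^*\rho$ to a component $F$ of $M^G$ and splitting $V|_F$, $W|_F$ and the normal bundle $N_F$ into $T$-weight subbundles identifies $\rho$, as a quadratic form on $LT$, with $\sum(\text{weights of }V|_F)^2+\sum(\text{weights of }W|_F)^2-\sum(\text{weights of }N_F)^2$, the same form for every component $F$. If $V=W=0$ — the case of the Witten genus — this is $-\sum(\text{weights of }N_F)^2$, which takes negative values since $N_F\neq0$: indeed $\dim T\geq1$ (otherwise $W(G)$ could not act nontrivially on $T$) and the action, being almost effective, is nontrivial, so $M^G\neq M$. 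One then chooses a primitive cocharacter of $T$ on which $\rho$ is negative and applies the previous paragraph.

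The main obstacle is the general case, in which $V$ and $W$ need not be zero and $\rho$ is a priori only constrained to be $W(G)$-invariant. The required negativity then has to be drawn out of the remaining hypotheses — the identity $c_1(V)=c_1^c(M)$ and the vanishing $p_1(V+W-TM)=0$ modulo torsion, confronted with the equivariant Chern data at a fixed component — or, when $\rho$ is degenerate, from the rigidity of $\varphi^c(M;V,W)_{S^1}$ for a circle whose cocharacter lies in $\ker\rho$ combined with the $W(G)$-invariance of the $T$-equivariant index. For $G=\text{Pin}(2)$ Dessai could deal with this point using the extra symmetry $z\mapsto z^{-1}$ coming from the nontrivial Weyl element, but for a general $G$ with $T^{W(G)}$ finite no Weyl element need invert a circle, so a genuinely new argument is needed. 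Everything else — making the data $G$-equivariant, recognising $p_1^G(V+W-TM)$ as pulled back from $BG$ via Lemma~\ref{sec:twist-dirac-oper-1}, and appealing to Theorem~\ref{sec:twist-dirac-oper-9} — is routine once the cohomological lemmas above are in hand.
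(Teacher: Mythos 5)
Your setup and reduction to Theorem~\ref{sec:twist-dirac-oper-9} via Lemma~\ref{sec:twist-dirac-oper-1} are exactly as in the paper, and your handling of the case $V=W=0$ is correct. But the final paragraph concedes a gap in the general case and guesses, wrongly, that some analogue of Dessai's $z\mapsto z^{-1}$ trick or a "genuinely new argument" is needed. In fact the conclusion is immediate once one exploits the right structure at the fixed point, and this is precisely where the hypothesis $T^{W(G)}$ finite enters.

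The point you missed is that one should restrict $p_1^G(V+W-TM)$ to a single $G$-fixed point $y\in M^G$ (not merely split into $T$-weight bundles over a component of $M^G$). At such a $y$, each fiber $L_i|_y$ (resp. $L_i'|_y$) carries a \emph{one-dimensional $G$-representation}, i.e.\ a character $\chi\colon G\to U(1)$. For any $g\in G$ and $t\in T$ one has $\chi(gtg^{-1})=\chi(t)$, so $\chi|_T$ is a $W(G)$-invariant element of $LT^*$. But $T^{W(G)}$ finite is equivalent to $(LT^*)^{W(G)}=0$, so $\chi|_T$ is trivial, and hence $\chi|_{S^1}$ is trivial for every circle $S^1\subset T$. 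Thus \emph{all} the weights of $V|_y$ and $W|_y$ as $S^1$-representations vanish, and $\rho(S^1,G)^*\alpha=-\sum\gamma_i^2<0$, where the $\gamma_i$ are the weights of $T_yM$; negativity is strict because the $S^1\subset T$ acts non-trivially on $M$ (the $G$-action being almost effective). This gives the negativity you were looking for with no extra rigidity argument, and it is exactly what the paper's proof does.
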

\begin{proof}
  Let \(V=\bigoplus L_i\) and \(W=\bigoplus L_i'\) with \(L_i,L_i'\) line bundles.
  By Lemmas \ref{sec:twist-dirac-oper}, \ref{sec:twist-dirac-oper-17} and Corollary 1.2 of \cite[p. 13]{0346.57014}, the \(G\)-action on \(M\) lifts into each line bundle \(L_i,L_i'\).
  Therefore \(p_1^G(V+W-TM)\) is well defined.
Moreover, by Lemma~\ref{sec:twist-dirac-oper-12}, the action of every \(S^1\subset T\subset G\) lifts into the \(\text{Spin}^c\)-structure on \(M\).

By Theorem~\ref{sec:twist-dirac-oper-9}, it is sufficient to show that, for \(S^1 \hookrightarrow T \hookrightarrow G\),
\begin{equation*}
  p_1^{S^1}(V+W-TM)=\rho(S^1,G)^*p_1^G (V+W-TM) = a \pi_{S^1}^*(x^2),
\end{equation*}
with \(a\in \Q\), \(a<0\), and \(x\in H^2(BS^1;\mathbb{Z})\) a generator.
Here \(\rho(S^1,G)^*: H_G^*(M;\Q)\rightarrow H_{S^1}^*(M;\Q)\) is the map induced by the inclusion \(S^1\hookrightarrow G\) and \(\pi_{S^1}^*:H^*(BS^1;\Q)\rightarrow H_{S^1}^*(M;\Q)\) is the natural map.

By Lemma \ref{sec:twist-dirac-oper-1}, there is an \(\alpha\in H^4(BG;\Q)\) with \(\pi_G^*(\alpha)=p_1^G(V+W-TM)\).
Therefore we have
\begin{equation*}
  p_1^{S^1}(V+W-TM)=\pi_{S^1}^*\rho(S^1,G)^*\alpha = a \pi_{S^1}^*(x^2)
\end{equation*}
with \(a \in\Q\).
It remains to show that \(a< 0\).
We restrict \(p_1^{S^1}(V+W-TM)\) to a \(G\)-fixed point \(y\).
Then we have
\begin{equation*}
  p_1^{S^1}(V+W-TM)|_y=\sum \alpha_i^2 + \sum \beta_i^2 - \sum \gamma_i^2,
\end{equation*}
where  \(\alpha_i\) is the weight of the \(S^1\)-representation on the fiber of \(L_i\) over \(y\),
\(\beta_i\) is the weight of the \(S^1\)-representation on the fiber of \(L_i'\) over \(y\) and the \(\gamma_i\) are the weights of the \(S^1\)-representation \(T_yM\).

The representations on the fibers of \(L_i,L_i'\) are restrictions of one-dimensional \(G\)-representations to \(S^1\).
Because \((LT^*)^{W(G)}=0\), all such representations are trivial.
Therefore \(a = -\sum \gamma_i^2 < 0\) follows, because \(S^1\) acts non-trivially on \(M\).
\end{proof}

\begin{lemma}
\label{sec:twist-dirac-oper-4}
 Assume that \(W(G)\) is cyclic. Then Theorem~\ref{sec:twist-dirac-oper-8} holds.
\end{lemma}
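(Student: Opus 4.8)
The plan is to deduce this case from Lemma~\ref{sec:twist-dirac-oper-2} by restricting the $G$-action to a carefully chosen closed subgroup $G'\subseteq G$ whose identity component is a torus on which the component group acts with only finitely many fixed points.

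First I would single out the relevant subtorus. Let $T_0$ be the identity component of $T^{W(G)}$; since $\mathbb{Q}[W(G)]$ is semisimple, $(LT)^{W(G)}=LT_0$ admits a $W(G)$-invariant complement in $LT$, which is the Lie algebra of a $W(G)$-invariant subtorus $T_1\subseteq T$ with $LT=LT_0\oplus LT_1$. Then $(LT_1)^{W(G)}=0$, the subtorus $T_1$ is normal in $G$ because conjugation factors through $W(G)$ and $W(G)$ preserves $LT_1$, and $T_1\neq\{e\}$ since $W(G)$ acts non-trivially on $T$ by condition~(\ref{item:2}). Next, fix a generator of the cyclic group $W(G)$ and lift it to an element $g\in G$ of finite order; such a lift exists because the closure of $\langle h\rangle$ for any lift $h$ is a compact abelian Lie group, hence a product of a torus and a finite cyclic group, whose second factor provides a finite-order lift. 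Now set
\begin{equation*}
  G':=\langle T_1,g\rangle=\bigcup_i T_1 g^i.
\end{equation*}
Since $g$ has finite order and normalizes $T_1$, this is a closed subgroup of $G$, equal to a disjoint union of finitely many cosets of $T_1$; hence ${G'}^0=T_1$ and $W(G'):=G'/{G'}^0$ is finite cyclic. The conjugation action of $W(G')$ on $T_1$ factors through the image of $W(G')$ in $W(G)$, which is all of $W(G)$; therefore $(LT_1)^{W(G')}=(LT_1)^{W(G)}=0$, so $T_1^{W(G')}$ is finite and, since $T_1\neq\{e\}$, the action of $W(G')$ on $T_1$ is non-trivial.

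Next I would check that $G'$, with the restricted action on $M$, together with the bundles $V$ and $W$, meets all the hypotheses of Theorem~\ref{sec:twist-dirac-oper-8}. Conditions~(\ref{item:1}) and~(\ref{item:2}) hold for $G'$ by the preceding paragraph. The $G'$-action is nice: it is trivial on $H^*(M;\mathbb{Z})$ because the $G$-action is, and its ineffective kernel, being contained in that of $G$, is finite. Also $M^{G'}\supseteq M^G\neq\emptyset$. The requirements on $V$ and $W$ ($W$ is $\text{Spin}$, $c_1(V)=c_1^c(M)$ and $p_1(V+W-TM)=0$ modulo torsion) do not refer to the group and so continue to hold. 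Finally, the hypothesis ``$b_1(M)=0$ or the action extends to a simply connected compact Lie-group'' passes to $G'$: if $b_1(M)=0$ this is immediate, and if the $G$-action extends along $G\to\hat G$ with $\hat G$ simply connected, then the $G'$-action extends along $G'\hookrightarrow G\to\hat G$. Since moreover $T_1^{W(G')}$ is finite, Lemma~\ref{sec:twist-dirac-oper-2} applies to $G'$ and gives $\varphi^c(M;V,W)=0$, which is the assertion of Theorem~\ref{sec:twist-dirac-oper-8}.

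The step I expect to be most delicate is the construction of $G'$. One must verify that a generator of $W(G)$ really can be lifted to a finite-order element and that the identity component of $G'$ is exactly $T_1$, so that in particular it contains no part of the central subtorus $T_0$; were this to fail, $H^2(BG';\mathbb{Q})$ would be non-zero and Lemma~\ref{sec:twist-dirac-oper-1}, on which Lemma~\ref{sec:twist-dirac-oper-2} rests, would break down. The fact that $T_1={G'}^0$ acts non-trivially on $M$ — implicitly what forces the equivariant Pontrjagin number to be negative in the proof of Lemma~\ref{sec:twist-dirac-oper-2} — follows from almost effectiveness of the $G$-action together with $\dim T_1\geq 1$.
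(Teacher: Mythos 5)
Your proof is correct and takes essentially the same approach as the paper: restrict the action to a subgroup $G'$ of $G$ whose identity component is the subtorus $T_1$ of $T$ on which $W(G)$ acts with only finitely many fixed points, verify that $G'$ meets the hypotheses of Lemma~\ref{sec:twist-dirac-oper-2}, and conclude. The single mechanical difference is in how the lift of a generator of $W(G)$ is handled: the paper takes an arbitrary lift $g$ and multiplies it by a suitable $t$ from the fixed subtorus so that $(gt^{-1})^{\#W(G)}$ lands in the moving torus, giving an exact sequence $1\to T_1\to G'\to W(G)\to 1$, whereas you lift to an element of finite order via the splitting $A\cong T^k\times F$ of the compact abelian Lie group $A=\overline{\langle h\rangle}$ and allow $W(G')=G'/T_1$ to be a cyclic group that surjects onto (rather than equals) $W(G)$; both constructions produce a group to which Lemma~\ref{sec:twist-dirac-oper-2} applies, since the conjugation action of $W(G')$ on $T_1$ factors through $W(G)$, so $T_1^{W(G')}=T_1^{W(G)}$ is finite either way.
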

\begin{proof}
  We show that \(G\) has a subgroup satisfying the assumptions of Lemma \ref{sec:twist-dirac-oper-2}.
Then the statement follows from that lemma.

By Lemma~\ref{sec:groups-acting-tori-1}, there are two \(W(G)\)-invariant subtori \(T_1\) and \(T_2\) of \(T\) such that
\begin{itemize}
\item \(W(G)\) acts trivially on \(T_1\).
\item \(T_2^{W(G)}\) is finite.
\item \(T\) is generated by \(T_1\) and \(T_2\).
\end{itemize}
Let \(g\in G\) be a preimage of a generator of \(W(G)\).
Then we have \(g^{\# W(G)}\in T\). Let \(t_1\in T_1\) and \(t_2\in T_2\) such that \(g^{\# W(G)}=t_1t_2\).
Moreover, let \(t\in T_1\) such that \(t_1=t^{\# W(G)}\).
Then \(gt^{-1}\) is another preimage of the generator of \(W(G)\) and \((gt^{-1})^{\# W(G)}\in T_2\).

Let \(G'\) be the subgroup of \(G\) generated by \(gt^{-1}\) and \(T_2\).
Then there is an exact sequence
\begin{equation*}
  1\rightarrow T_2\rightarrow G'\rightarrow W(G)\rightarrow 1.
\end{equation*}
Therefore \(G'\) satisfies the assumptions of Lemma~\ref{sec:twist-dirac-oper-2}.
\end{proof}

In the situation of Theorem \ref{sec:twist-dirac-oper-8}, there is always a cyclic subgroup \(W(H)\) of \(W(G)\) which acts non-trivially on \(T\).
If \(H\) is the preimage of \(W(H)\) under the map \(G\rightarrow W(G)\), then Theorem \ref{sec:twist-dirac-oper-8} follows from Lemma~\ref{sec:twist-dirac-oper-4} applied to the restricted action of \(H\) on \(M\).

From Theorem~\ref{sec:twist-dirac-oper-8} we get the following corollaries about actions of compact connected non-abelian Lie-groups on \(\text{Spin}^c\)-manifolds.

\begin{cor}
\label{sec:twist-dirac-oper-5}
  Let \(G\) be a compact connected non-abelian Lie-group and \(M\) a \(\text{Spin}^c\)-manifold with \(\varphi^c(M;V,W)\neq 0\) and \(V\), \(W\) as in Theorem~\ref{sec:twist-dirac-oper-8}.
  Assume that \(G\) acts almost effectively on \(M\) and that \(T\) is a maximal torus of \(G\). 
Then, for all \(x\in M^T\), \(G_x=T\) holds.
\end{cor}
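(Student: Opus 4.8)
The plan is to argue by contradiction. Suppose there is a point \(x\in M^T\) with \(G_x\neq T\). Since \(x\in M^T\) we have \(T\subseteq G_x\), so \(T\) is properly contained in \(G_x\). I will produce a closed subgroup \(H\) of \(G_x\) that acts nicely on \(M\), has \(x\) as a fixed point, and satisfies conditions~(\ref{item:1}) and~(\ref{item:2}); Theorem~\ref{sec:twist-dirac-oper-8} then forces \(\varphi^c(M;V,W)=0\), contradicting the hypothesis. Hence \(G_x=T\).

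The first step is to determine the structure of \(G_x\). As \(T\) is a maximal torus of \(G\) and \(T\subseteq G_x\subseteq G\), it is also a maximal torus of \(G_x\), and hence of the identity component \(G_x^0\). If \(G_x^0=T\), I set \(H=G_x\); then \(G_x\) normalizes its identity component \(T\), so \(H\subseteq N_G(T)\), and \(H/T=G_x/G_x^0\) is a non-trivial finite group. If instead \(\dim G_x^0>\dim T\), then \(G_x^0\) is a compact connected non-abelian Lie-group with maximal torus \(T\), so its Weyl group \(N_{G_x^0}(T)/T\) is non-trivial, and I set \(H=N_{G_x^0}(T)\subseteq G_x^0\subseteq G_x\). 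In both cases \(T\subseteq H\), \(H\subseteq G_x\cap N_G(T)\), the quotient \(H/T\) is a non-trivial finite group, and the sequence \(1\rightarrow T\rightarrow H\rightarrow H/T\rightarrow 1\) is exact, so \(H\) satisfies condition~(\ref{item:1}).

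For condition~(\ref{item:2}), the conjugation action of \(H\) on \(T\) factors through \(H/T\), which via \(H\subseteq N_G(T)\) is a subgroup of \(W(G)=N_G(T)/T\); since the Weyl group of a compact connected Lie-group acts faithfully on its maximal torus, the non-trivial group \(H/T\) acts non-trivially on \(T\). It then remains to check that the restricted \(H\)-action on \(M\) satisfies the other hypotheses of Theorem~\ref{sec:twist-dirac-oper-8}: it is nice, being almost effective as the restriction of an almost effective action and trivial on \(H^*(M;\mathbb{Z})\) because \(G\) is connected; we have \(x\in M^{G_x}\subseteq M^H\), so \(M^H\neq\emptyset\); and the conditions on \(V\), \(W\) and \(b_1(M)\) hold by assumption and are unaffected by passing to \(H\). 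Theorem~\ref{sec:twist-dirac-oper-8} then gives \(\varphi^c(M;V,W)=0\), the desired contradiction.

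The only delicate point is the dichotomy for \(G_x\): when \(G_x\) is not already a finite extension of \(T\), one must descend to \(N_{G_x^0}(T)\) to recover such an extension, and then the non-triviality needed in condition~(\ref{item:2}) is exactly the faithfulness of the Weyl-group action on the maximal torus. The remaining work is the routine verification that the hypotheses of Theorem~\ref{sec:twist-dirac-oper-8} pass from \(G\) to the subgroup \(H\).
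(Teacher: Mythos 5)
Your argument takes a genuinely different route from the paper's, and it is nearly complete, but there is one gap worth flagging.

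The paper first passes to a covering group \(\tilde{G}=G'\times T_0\) with \(G'\) semi-simple and simply connected, uses an embedded \(\text{Pin}(2)\) inside \(G_x'^0\subseteq G'\) to show that \(G_x'^0\) is a maximal torus \(T'\) of \(G'\), and then uses \(G'_x/T'\subseteq W(G')\) to show \(G'_x=T'\). You instead work directly inside \(G_x\): you split according to whether \(G_x^0=T\) (take \(H=G_x\)) or \(G_x^0\supsetneq T\) (take \(H=N_{G_x^0}(T)\)), and in both cases verify conditions~(\ref{item:1}) and~(\ref{item:2}) by observing that \(H/T\) embeds into a Weyl group, which acts faithfully on the maximal torus. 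That dichotomy is exhaustive (a compact connected abelian group containing \(T\) as a maximal torus must equal \(T\)), and the verification of~(\ref{item:1}), (\ref{item:2}), niceness, and \(M^H\neq\emptyset\) is correct. Replacing the paper's \(\text{Pin}(2)\) by the normalizer of the maximal torus is a clean alternative that collapses the paper's two steps into one case distinction.

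The gap is the assertion that ``the conditions on \(V\), \(W\) and \(b_1(M)\) hold by assumption and are unaffected by passing to \(H\).'' Theorem~\ref{sec:twist-dirac-oper-8} requires either \(b_1(M)=0\) or that the acting group's action extends to an action of a simply connected compact Lie-group; the corollary as stated assumes neither. Your \(H\) is a subgroup of \(G\), and \(G\) need not admit a compact simply connected covering group (for instance when \(G\) has a central torus), so the extension hypothesis is not automatically available for \(H\). This is exactly the obstacle the paper's covering-group manoeuvre is designed to sidestep: it applies Theorem~\ref{sec:twist-dirac-oper-8} only to subgroups of the simply connected factor \(G'\), for which the extension hypothesis is free. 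To repair your proof in full generality, run the same dichotomy for \(G'_x\subseteq G'\) rather than \(G_x\subseteq G\). As written, your argument proves the corollary only under the extra hypothesis \(b_1(M)=0\) (which happens to cover all of the paper's quasitoric applications, but is not part of the statement).
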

\begin{proof}
  Let \(\tilde{G}=G'\times T_0\) be a covering group of \(G\) with \(G'\) a semi-simple simply connected compact Lie-group and \(T_0\) a torus.
  Then we have \(\tilde{G}_x=G'_x\times T_0\).
  We will show that \(G'_x\) is a maximal torus of \(G'\).
  From this the statement follows.
  Since, for each compact connected non-abelian Lie-group \(H\), there is a group homomorphism \(\text{Pin}(2)\rightarrow H\) with finite kernel, \(G_x'^0=T'\) is a maximal torus of \(G'\) by Theorem \ref{sec:twist-dirac-oper-8}.

  Assume that \(G'_x\neq T'\).
  Then there is an exact sequence
  \begin{equation*}
    1 \rightarrow T'\rightarrow G'_x \rightarrow G'_x/T'\rightarrow 1
  \end{equation*}
  and we have \(G_x'/T'\subset N_{G'} T'/T'\).
  Therefore \(G'_x/T'\) acts non-trivially on \(T'\).
  But this is a contradiction to Theorem \ref{sec:twist-dirac-oper-8}.
\end{proof}

\begin{cor}
\label{sec:twist-dirac-oper-7}
  Let \(M\) and \(G\) as in Corollary \ref{sec:twist-dirac-oper-5}. Then \(\# W(G) | \chi(M)\).
\end{cor}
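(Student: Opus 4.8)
The plan is to exploit Corollary~\ref{sec:twist-dirac-oper-5}: it tells us that along the \(T\)-fixed point set \(M^T\) the \(G\)-isotropy is as small as possible, namely \(G_x = T\) for every \(x \in M^T\). From this I expect to deduce that the Weyl group \(W(G) = N_G(T)/T\) acts freely on \(M^T\), and then the divisibility \(\#W(G) \mid \chi(M^T) = \chi(M)\) will follow at once.

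First I would record the routine fact that \(N_G(T)\) preserves \(M^T\): for \(n \in N_G(T)\) and \(x \in M^T\) one has \(t\cdot(nx) = n(n^{-1}tn)x = nx\) for all \(t\in T\), since \(n^{-1}tn \in T\); and \(T\) acts trivially on \(M^T\) by definition, so the \(N_G(T)\)-action on \(M^T\) descends to an action of \(W(G)\).

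The key point is that this \(W(G)\)-action is free. If \(w = nT\) with \(n \in N_G(T)\) fixes some \(x \in M^T\), then \(n \in G_x\), and Corollary~\ref{sec:twist-dirac-oper-5} gives \(G_x = T\), so \(w\) is trivial in \(W(G) = N_G(T)/T\). Consequently \(M^T \to M^T/W(G)\) is a \(\#W(G)\)-sheeted covering of closed manifolds — here we use that \(M\), carrying a well-defined index \(\varphi^c(M;V,W)\), is compact, so that \(M^T\) is a closed manifold — and therefore \(\chi(M^T) = \#W(G)\cdot \chi(M^T/W(G))\); in particular \(\#W(G) \mid \chi(M^T)\).

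Finally I would invoke the classical identity \(\chi(M) = \chi(M^T)\), valid for any torus \(T\) acting on a compact manifold \(M\): applying the Lefschetz fixed point formula to a topological generator of each circle factor of \(T\) — a path-connected group acts trivially on rational cohomology, hence its Lefschetz number equals \(\chi(M)\), while its fixed set is the fixed set of the whole circle — one gets \(\chi(M) = \chi(M^{S^1})\) at each stage, and iterating over the factors yields \(\chi(M) = \chi(M^T)\); alternatively this may simply be quoted from the literature on transformation groups. Combining the two divisibilities gives \(\#W(G) \mid \chi(M)\). I do not foresee a genuine obstacle; the only point demanding care is that the \(W(G)\)-action on \(M^T\) be well defined and free, which is precisely what Corollary~\ref{sec:twist-dirac-oper-5} delivers, and the case \(M^T = \emptyset\) needs no separate treatment since then \(\chi(M) = \chi(M^T) = 0\).
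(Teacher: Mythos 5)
Your proof is correct and follows exactly the paper's own argument: use Corollary~\ref{sec:twist-dirac-oper-5} to see that \(W(G)\) acts freely on \(M^T\), so \(\chi(M^T) = \#W(G)\cdot\chi(M^T/W(G))\), and conclude via \(\chi(M) = \chi(M^T)\). You have simply spelled out the routine details (that \(N_G(T)\) preserves \(M^T\), that the action descends to \(W(G)\), and the case \(M^T=\emptyset\)) that the paper leaves implicit.
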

\begin{proof}
  We have \(\chi(M)=\chi(M^T)\), where \(T\) is a maximal torus of \(G\).
 By Corollary \ref{sec:twist-dirac-oper-5}, \(W(G)\) acts freely on \(M^T\). Therefore we get
  \begin{equation*}
    \chi(M)=\# W(G) \chi (M^T/W(G)).
  \end{equation*}
\end{proof}

The following two corollaries give upper bounds for the degree of symmetry of a \(\text{Spin}^c\)-manifold which admits a twisted Dirac-operator with non-zero index.

\begin{cor}
\label{sec:twist-dirac-oper-3}  
Let \(M\) be a \(2n\)-dimensional \(\text{Spin}^c\)-manifold with \(\varphi^c(M;V,W)\neq 0\) and \(V\), \(W\) as in Theorem~\ref{sec:twist-dirac-oper-8} and \(G\) be a compact connected Lie-group with
  \begin{enumerate}
  \item \(\dim G -\rank G>2n\) or
  \item \(\dim G- \rank G=2n\) and \(\rank G < T(M)\).
  \end{enumerate}
Then there is no effective action of \(G\) on \(M\).
\end{cor}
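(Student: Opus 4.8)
The plan is to derive a contradiction from the existence of an effective action of $G$ on $M$ by restricting to the action of a maximal torus and estimating dimensions of fixed point sets. Let $T$ be a maximal torus of $G$, so $\dim G - \rank G = \dim(G/T)$. The starting observation is that since $M$ admits a twisted Dirac operator with non-vanishing index, $\chi(M)>0$ (in fact $\varphi^c(M;V,W)\neq 0$ forces $M^{S^1}\neq\emptyset$ for every circle in any torus acting on $M$, by the Lefschetz-type properties of the equivariant index, so $\chi(M)=\chi(M^T)\neq 0$ and $M^T\neq\emptyset$). In particular $M$ cannot be acted on effectively by a torus of rank $>n$, and when $\chi(M)\neq 0$ we get $T(M)\le n$; this is what makes the hypotheses in (1) and (2) meaningful.

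First I would reduce to the case that $G$ is non-abelian: if $G$ were a torus then $\dim G - \rank G = 0$, which is incompatible with either (1) or (2) since $2n>0$, so there is nothing to prove. So assume $G$ is a compact connected non-abelian Lie-group acting effectively (hence almost effectively) on $M$. Now apply Corollary~\ref{sec:twist-dirac-oper-5}: for every $x\in M^T$ we have $G_x = T$. This is the crucial input. It means the $G$-orbit through such a point $x$ is $G/T$, a submanifold of $M$ of dimension $\dim G - \rank G$, and hence $\dim G - \rank G \le \dim M = 2n$. This immediately kills case (1), where $\dim G - \rank G > 2n$: no such $x$ can exist, contradicting $M^T\neq\emptyset$.

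For case (2), where $\dim G - \rank G = 2n$ exactly, the orbit $Gx\cong G/T$ through a point $x\in M^T$ is then an open subset of $M$ (same dimension, and $M$ connected — or at least the component containing $x$), so in fact $M = G/T$ (up to the usual connectedness/closedness argument: $Gx$ is compact hence closed, and open, so it is all of a connected $M$). But $G/T$ carries an effective action of $T$ of rank equal to $\rank G$ (left translation by $T$, which is effective since $T$ is its own centralizer up to the Weyl group — more carefully, one checks $T$ acts almost effectively and passes to an effective quotient torus of the same dimension, or simply notes the standard fact $T(G/T)\ge \rank G$). Since $M\cong G/T$, this gives $T(M)\ge \rank G$, contradicting the hypothesis $\rank G < T(M)$. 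Combining the two cases completes the proof.

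The main obstacle I expect is the clean identification $M=G/T$ in case (2), specifically handling connectedness of $M$ and verifying that $T(G/T)\ge\rank G$ (equivalently that $\rank G < T(M)$ is genuinely violated); one must be careful that "effective" for $G$ does not automatically give "effective" for the subtorus $T$, but since the kernel of the $T$-action on $G/T$ is finite (it lies in the center of $G$, which meets $T$ in a finite group only if... — actually the kernel is the center of $G$, which is finite iff $G$ is semisimple, so in general one quotients by this finite central subgroup to get an effective torus action of the same dimension). The rest — the reduction to the non-abelian case and the dimension count for case (1) — is routine once Corollary~\ref{sec:twist-dirac-oper-5} is in hand.
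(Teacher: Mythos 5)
Your reduction to the non-abelian case and the treatment of case (1) are sound and essentially match the paper's argument: from the non-vanishing of $\varphi^c(M;V,W)$ one gets a $T$-fixed point $x$, and Corollary~\ref{sec:twist-dirac-oper-5} forces $G_x=T$, so $\dim G/T \le 2n$; this rules out case (1). (The paper actually argues with an arbitrary $x\in M$, bounding $\dim G_x\le\rank G$ directly via Theorem~\ref{sec:twist-dirac-oper-8}, but this amounts to the same thing.)

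However, your case (2) has a genuine logical error. After establishing $M\cong G/T$, you derive the \emph{lower} bound $T(M)\ge\rank G$ and claim this contradicts the hypothesis $\rank G<T(M)$. It does not: both inequalities assert that $T(M)$ is at least $\rank G$, so they are perfectly compatible (for instance $\rank G=3$, $T(M)=5$ satisfies both). To reach a contradiction with $\rank G<T(M)$ you need the \emph{upper} bound $T(M)\le\rank G$. This is precisely Hauschild's computation of the torus symmetry degree of $G/T$, namely $T(G/T)=\rank G$ for $G$ semi-simple (Theorem 3.3 of \cite{0623.57024}), which the paper invokes at this point and which is not elementary — the easy Euler-characteristic bound $T(G/T)\le\tfrac12\dim(G/T)$ is far too weak. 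Your discussion carefully addresses the easy direction (effectiveness of the left $T$-translation on $G/T$, hence $T(M)\ge\rank G$) while the actually needed direction is the hard one, which is missing. Also note that once $M=G/T$ one must observe $G$ is semi-simple (as the paper does) before Hauschild's result applies.
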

\begin{proof}
  Let \(\tilde{G}=G'\times T_0\) be a covering group of \(G\) with \(G'\) a semi-simple simply connected compact Lie-group and \(T_0\) a torus.
  Let \(x\in M\).
 Then by Theorem~\ref{sec:twist-dirac-oper-8} the identity component of \(G'_x\) must be a torus.
  Therefore \(\dim G_x \leq \rank G\).
  Moreover, there is an embedding of \(G/G_x\) in \(M\).
  In case (1) this is impossible.

  In case (2) we have, by dimension reasons, that \(M=G/H\) and \(H\) has maximal rank in \(G\).
  By Corollary~\ref{sec:twist-dirac-oper-5}, \(H\) must be a maximal torus of \(G\).
  Moreover, \(G\) is semi-simple because it acts effectively on \(M\).
  The torus symmetry degree of \(G/H\) was calculated by Hauschild \cite[Theorem 3.3, p. 563]{0623.57024}.
  It is equal to \(\rank G\), which contradicts our assumption that \(\rank G<T(M)\).
\end{proof}

Note that, if \(G\) is a compact Lie-group which acts effectively on a manifold \(M\) as in the above corollary, then the rank of \(G\) is bounded from above by the torus symmetry degree of \(M\). 
Therefore we have \(N(M)\leq 2n + T(M)\).
If the Euler-characteristic of \(M\) is non-zero, we have \(T(M)\leq n\), so that we get the following corollary.

\begin{cor}
\label{sec:twist-dirac-oper-11}
  Let \(M\) be a \(2n\)-dimensional \(\text{Spin}^c\)-manifold with \(\chi(M)\neq 0\) and \(\varphi^c(M;V,W)\neq 0\) with \(V\), \(W\) as in Theorem~\ref{sec:twist-dirac-oper-8}  and \(G\) a compact connected Lie-group which acts effectively on \(M\).
  Then \(\dim G \leq 3n\).
  If \(\dim G= 3n\), then \(M=\prod S^2\).
\end{cor}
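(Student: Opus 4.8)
The plan is to read off the inequality $\dim G\le 3n$ from Corollary~\ref{sec:twist-dirac-oper-3} together with the bound on the rank noted just before it, and then to pin down the equality case by showing that $M$ must be a homogeneous space $G/T$ with $T$ a maximal torus of a semi-simple group $G$, after which a count of positive roots of $G$ finishes the argument. Throughout we take $M$ connected (the case $n=0$, where $M$ is a point, being trivial).

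For the inequality: if $G$ acts effectively on $M$, then so does a maximal torus of $G$, whence $\rank G\le T(M)$, and $\chi(M)\neq 0$ forces $T(M)\le n$. By Corollary~\ref{sec:twist-dirac-oper-3}\,(1) we cannot have $\dim G-\rank G>2n$; and since $\dim G-\rank G$ equals the number of roots of $G$, hence is even, either $\dim G-\rank G\le 2n-2$, which yields $\dim G\le \rank G+2n-2\le 3n-2$, or $\dim G-\rank G=2n$, in which case Corollary~\ref{sec:twist-dirac-oper-3}\,(2) forces $\rank G\ge T(M)$, so that $\rank G=T(M)\le n$ and $\dim G=2n+\rank G\le 3n$. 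In all cases $\dim G\le 3n$, and equality can occur only when $\dim G-\rank G=2n$ and $\rank G=T(M)=n$; in particular $G$ is then non-abelian.

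Now assume $\dim G=3n$, so $\rank G=n$. Arguing as in the proof of Corollary~\ref{sec:twist-dirac-oper-5}, by passing to a covering $\widetilde G=G'\times T_0$ and applying Theorem~\ref{sec:twist-dirac-oper-8}, the identity component of every isotropy group $G_x$ is a torus, so $\dim G_x\le \rank G=n$; combined with the embedding $G/G_x\hookrightarrow M$ this forces $\dim G/G_x=2n$ for every $x$, so every orbit is open, and since $M$ is connected $M=G/G_x$ with $G_x^0$ a maximal torus of $G$. Since $\chi(M^{T})=\chi(M)\neq 0$ for a maximal torus $T$ of $G$, the fixed set $M^{T}$ is non-empty, and Corollary~\ref{sec:twist-dirac-oper-5} gives $G_x=T$ for $x\in M^{T}$; as all isotropy groups of a single orbit are conjugate, $M\cong G/T$. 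Effectiveness then forces $G$ to be semi-simple, since its central torus lies in $T$ and is normal in $G$, hence contained in the kernel of the action.

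Finally, $G$ is a semi-simple compact connected Lie-group of rank $n$ with $\dim G/T=2n$, i.e.\ with exactly $n$ positive roots. Decomposing the Lie algebra of $G$ into simple ideals of ranks $r_i$ and with $p_i$ positive roots, one has $\sum r_i=\sum p_i=n$ while $p_i\ge r_i$ for each simple summand, with equality precisely for type $A_1$ (a routine check against the Cartan--Killing classification: $A_r$ has $r(r+1)/2$ positive roots, $B_r,C_r$ have $r^2$, etc.). Hence every simple factor is of type $A_1$, and $G/T$ is diffeomorphic to $\prod_{i=1}^n\bigl(SU(2)/S^1\bigr)=\prod_{i=1}^n S^2$, so $M\cong \prod_{i=1}^n S^2$. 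I expect the equality analysis to be the main obstacle — especially verifying that $M$ is a single homogeneous orbit and that its isotropy group is exactly a maximal torus — whereas the inequality $p_i\ge r_i$ is elementary case-checking.
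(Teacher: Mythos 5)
Your proof is correct and takes essentially the same route as the paper's: bound $\dim G\le 2n+\operatorname{rank} G\le 2n+T(M)\le 3n$ via Corollary~\ref{sec:twist-dirac-oper-3} and $\chi(M)\neq 0$, and in the equality case show $M=G/T$ with $T$ maximal and $G$ semi-simple, then observe that a simple factor of rank $r$ with exactly $r$ positive roots must be of type $A_1$ (equivalently, $\dim G'\ge 3\operatorname{rank}G'$ with equality only for $SU(2)$). The only difference is one of exposition: the paper simply re-uses the conclusion $M=G/T$ already established inside the proof of Corollary~\ref{sec:twist-dirac-oper-3}\,(2), whereas you re-derive it from scratch (toral isotropy components, open orbit, $M^T\neq\emptyset$, Corollary~\ref{sec:twist-dirac-oper-5}); your parity digression on $\dim G-\operatorname{rank}G$ is correct but unnecessary for the inequality, which follows directly from $\dim G-\operatorname{rank}G\le 2n$ and $\operatorname{rank}G\le n$.
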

\begin{proof}
 By the discussion above, we only have to prove the second statement.

If \(\dim G=3n\), then we must have \(\rank G =n\) and \(M=G/T\), where \(T\) is a maximal torus of \(G\).
Therefore \(G\) is semi-simple.
Because for a simple Lie-group \(G'\) we have \(\dim G' \geq 3\rank G'\) with equality holding if and only if \(G'\) is a quotient of \(SU(2)\), we see that \(G\) has a covering group of the form \(\prod SU(2)\).
Therefore the statement follows.
\end{proof}

\section{Products and connected sums}
\label{sec:prod}

In this section we discuss the calculation of the indices \(\varphi^c(M;V,W)\) for the case where \(M\) is a connected sum or a product of \(\text{Spin}^c\)-manifolds.
The formulas derived here will be used in our applications of the results of the previous section in Sections~\ref{sec:van_witten} and \ref{sec:qt_mfd}.

For cartesian products of \(\text{Spin}^c\)-manifolds we have the following lemma.

\begin{lemma}
\label{sec:prod-conn-sums-2}
  Let \(M_1,M_2\) be even-dimensional \(\text{Spin}^c\)-manifolds, \(V_i\rightarrow M_i\) complex vector bundles and \(W_i\rightarrow M_i\), \(i=1,2\), \(\text{Spin}\) vector bundles.
Then \(M_1\times M_2\) is naturally a \(\text{Spin}^c\)-manifold and 
\begin{equation*}
  \varphi^c(M_1\times M_2;p_1^*V_1\oplus p_2^*V_2,p_1^*W_1\oplus p_2^*W_2)=
 \varphi^c(M_1;V_1,W_1) \varphi^c(M_2;V_2,W_2),
\end{equation*}
where \(p_i:M_1\times M_2\rightarrow M_i\), \(i=1,2\), is the projection.
\end{lemma}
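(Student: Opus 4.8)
The plan is to verify that the formula is a direct consequence of the multiplicativity of all the ingredients appearing in the Atiyah--Singer formula
\begin{equation*}
  \varphi^c(M;V,W)=\langle e^{c_1^c(M)/2}\ch(R)\hat{A}(M),[M]\rangle,
\end{equation*}
together with the compatibility of \(\text{Spin}^c\)-structures under cartesian products. First I would recall that if \(M_1,M_2\) are \(\text{Spin}^c\)-manifolds then \(M_1\times M_2\) carries a canonical \(\text{Spin}^c\)-structure with \(T(M_1\times M_2)\cong p_1^*TM_1\oplus p_2^*TM_2\) and associated line bundle \(p_1^*L_1\otimes p_2^*L_2\), so that \(c_1^c(M_1\times M_2)=p_1^*c_1^c(M_1)+p_2^*c_1^c(M_2)\); in particular \(e^{c_1^c(M_1\times M_2)/2}=p_1^*e^{c_1^c(M_1)/2}\cdot p_2^*e^{c_1^c(M_2)/2}\). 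Likewise \(\hat A(M_1\times M_2)=p_1^*\hat A(M_1)\cdot p_2^*\hat A(M_2)\).

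Next I would treat the characteristic class \(\ch(R)\). Writing \(R=R(TM,V,W)\) in the factored form \(\ch(R)=Q_1(TM)Q_2(V)Q_3(W)\) given in the excerpt, each of \(Q_1,Q_2,Q_3\) is a product over the formal roots of the respective bundle of an explicit power series in those roots and in \(q\). Since the formal roots of \(p_1^*TM_1\oplus p_2^*TM_2\) are the union (with pullbacks) of the roots of \(TM_1\) and \(TM_2\), and similarly for \(V=p_1^*V_1\oplus p_2^*V_2\) and \(W=p_1^*W_1\oplus p_2^*W_2\), each product factor splits as a product of a class pulled back from \(M_1\) and one pulled back from \(M_2\). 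Hence
\begin{equation*}
  e^{c_1^c/2}\ch(R)\hat A(M_1\times M_2)=p_1^*\bigl(e^{c_1^c(M_1)/2}\ch(R_1)\hat A(M_1)\bigr)\cdot p_2^*\bigl(e^{c_1^c(M_2)/2}\ch(R_2)\hat A(M_2)\bigr).
\end{equation*}
Evaluating against the fundamental class and using \([M_1\times M_2]=[M_1]\times[M_2]\), together with the fact that \(\langle p_1^*a\cdot p_2^*b,[M_1]\times[M_2]\rangle=\langle a,[M_1]\rangle\langle b,[M_2]\rangle\) (only the top-degree part in each factor contributes), yields the claimed product formula.

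The only genuine point requiring care — and the step I expect to be the main obstacle — is the bookkeeping with the formal variable \(q\): one must check that the power series \(R\) for the product is genuinely the ``external tensor product'' \(p_1^*R_1\otimes p_2^*R_2\) coefficient-by-coefficient in \(q\), so that \(\varphi^c(M_1\times M_2;\dots)\) is the product of two power series in \(\mathbb Z[[q]]\) rather than something only agreeing in each fixed \(q\)-degree after further manipulation. This is a formal check: the operations \(S_{q^k}\), \(\Lambda_{-q^k}\), \(\Lambda_{q^n}\) applied to a direct sum of bundles factor as tensor products of the corresponding operations on the summands (e.g.\ \(S_t(E\oplus F)=S_t(E)\otimes S_t(F)\)), and pullback along \(p_i\) is a ring map on \(K\)-theory, so the infinite tensor products defining \(R\) distribute over the two factors term by term in \(q\). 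Once this is observed, the remainder is the routine multiplicativity argument sketched above, and I would present it briefly without grinding through the explicit root expansions.
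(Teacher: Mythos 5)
Your proposal is correct and follows essentially the same route as the paper: both use the Atiyah--Singer formula and the multiplicativity of $e^{c_1^c/2}$, $\ch(R)$, and $\hat{A}$ under cartesian products, then pair against $[M_1\times M_2]=[M_1]\times[M_2]$. The paper simply compresses the step you carry out in detail (factoring $Q_1, Q_2, Q_3$ via the formal roots) into the single observation that the top-degree part of the integrand on $M_1\times M_2$ is $p_1^*Q_1\cdot p_2^*Q_2$.
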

\begin{proof}
  Let \(Q_i\in H^{\dim M_i}(M_i;\Q)[[q]]\) be the degree \(\dim M_i\) part of
  \begin{equation*}
e^{c_1^c(M_i)/2}\ch(R)\hat{A}(M_i)\in H^*(M_i;\Q)[[q]],    
  \end{equation*}
 \(i=1,2\).
Then we have
\begin{align*}
   \varphi^c(M_1\times M_2;p_1^*V_1\oplus p_2^*V_2,p_1^*W_1\oplus p_2^*W_2)&=\langle p_1^*Q_1p_2^*Q_2,[M_1\times M_2]\rangle\\
&=\langle Q_1,[M_1]\rangle\langle Q_2,[M_2]\rangle\\
&= \varphi^c(M_1;V_1,W_1)\varphi^c(M_2;V_2,W_2).
\end{align*}
\end{proof}

The connected sum of two \(\text{Spin}^c\)-manifolds is again a \(\text{Spin}^c\)-manifold.
For these manifolds we have the following lemma.

\begin{lemma}
\label{sec:prod-conn-sums}
  Let \(M_1,M_2\) be \(\text{Spin}^c\)-manifolds of the same even dimension greater or equal to four, \(V_i\rightarrow M_i\), \(i=1,2\), complex vector bundles which are sums of complex line bundles and \(W_i\rightarrow M_i\), \(i=1,2\), \(\text{Spin}\)-bundles which are sums of complex line bundles such that
  \begin{align}
    \label{eq:3}
    c_1(V_i)&=c_1^c(M_i)&\text{and}&& p_1(V_i+W_i-TM_i)&=0.
  \end{align}
Then \(M_1\# M_2\) has a \(\text{Spin}^c\)-structure, such that \(c_1^c(M_1 \# M_2)=c_1^c(M_1)+c_1^c(M_2)\).

If \(\dim V_1> \dim V_2\), then there are vector bundles \(V\rightarrow M_1\# M_2\), \(W\rightarrow M_1\# M_2\) which are sums of complex line bundles satisfying (\ref{eq:3}) such that
\begin{equation*}
   \varphi^c(M_1\# M_2;V,W)=2^{\dim_\C W_2}\varphi^c(M_1;V_1,W_1).
\end{equation*}

If \(\dim V_1=\dim V_2\), then the same holds with
\begin{equation*}
   \varphi^c(M_1\# M_2;V,W)=2^{\dim_\C W_2}\varphi^c(M_1;V_1,W_1)+2^{\dim_\C W_1}\varphi^c(M_2;V_2,W_2).
\end{equation*}
\end{lemma}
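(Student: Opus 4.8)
The plan is to reduce the connected-sum computation to a local statement about the characteristic-class integrand near the gluing region, using the fact that the relevant index is an integral of a top-degree cohomology class. First I would construct the $\text{Spin}^c$-structure and the bundles $V,W$ on $M_1\# M_2$. Since $M_1\# M_2$ is obtained by removing an open disk $D_i$ from each $M_i$ and gluing along the boundary spheres, and since $\dim M_i\geq 4$ means $H^2$ and $H^1$ are unaffected by removing a disk, the $\text{Spin}^c$-structures on $M_1\setminus D_1$ and $M_2\setminus D_2$ agree on the gluing sphere (both restrict to the unique structure on $S^{2n-1}$) and glue to a structure with $c_1^c(M_1\# M_2)=c_1^c(M_1)+c_1^c(M_2)$ under the natural identification $H^2(M_1\# M_2)\cong H^2(M_1)\oplus H^2(M_2)$. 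The same remark lets the line bundles summing to $V_i,W_i$ extend across the sphere (they are trivial on a disk), but their ranks may disagree: on $M_1\# M_2$ one must stabilize so that the two collections of line bundles have common total ranks. This is where the hypothesis $\dim V_1>\dim V_2$ versus $\dim V_1=\dim V_2$ enters: to form $V$ we take $V_1$ on the $M_1$ side and $V_2$ on the $M_2$ side plus $(\dim V_1-\dim V_2)$ trivial line bundles, and likewise pad $W_2$ (resp.\ both $W_i$) with trivial line bundles so that the constraints \eqref{eq:3} still hold — note $p_1$, $c_1^c$ and $c_1$ are unchanged by adding trivial summands, and a sum of a trivial line bundle is still $\text{Spin}$.

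Next I would compute $\varphi^c(M_1\# M_2;V,W)$ via Atiyah--Singer as the evaluation on the fundamental class of the degree-$2n$ part of $e^{c_1^c/2}\ch(R)\hat A$. The key point is that $[M_1\# M_2]$, as a homology class, is the sum of the images of the relative fundamental classes of $(M_1\setminus D_1,\partial)$ and $(M_2\setminus D_2,\partial)$, and the cohomology ring of $M_1\# M_2$ in positive degrees splits as $H^{>0}(M_1)\oplus H^{>0}(M_2)$ with the two factors annihilating each other's products in top degree (any product of a positive-degree class pulled back from $M_1$ with one from $M_2$ vanishes, since such classes are supported away from each other). Hence the integrand, which is a polynomial in characteristic classes of $TM$, $V$, $W$, evaluates on $[M_1\# M_2]$ as the sum of its evaluation on $[M_1]$ (using the $M_1$-part of all the bundles) and on $[M_2]$ (using the $M_2$-part). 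On the $M_1$ side the bundle $W$ restricts to $W_1\oplus(\text{trivial of rank }\dim_\C W_2)$, and the factor $Q_3$ of the spinor bundle contributes $\ch\Delta(W_1\oplus\mathbb{C}^{\dim_\C W_2})=2^{\dim_\C W_2}\ch\Delta(W_1)\cdot(\text{stuff equal to }1\text{ in degree }0)$; more precisely, every trivial line-bundle summand $L$ contributes a factor $(e^{0}+e^{0})\prod_k\frac{(1+q^k)^2}{(1+q^k)^2}=2$ to $Q_3$ and a factor $(1-e^0)\prod_k(\dots)=0$ to $Q_2$ — so the $V$-factor would \emph{kill} a trivial summand of $V$. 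That forces the padding to be done carefully: we pad $W$, never $V$, and the rank discrepancy between $V_1$ and $V_2$ is absorbed by the fact that on each side we only use that side's $V_i$, which already satisfies $c_1(V_i)=c_1^c(M_i)$; the trivial summands of $V$ live on the \emph{other} side and contribute the vanishing factor there, which is exactly why the $\dim V_1>\dim V_2$ case loses the $M_2$ contribution entirely. In the equal-rank case no $V$-padding is needed and both contributions survive, each weighted by $2$ to the power of the number of trivial $W$-summands added on that side, i.e.\ $2^{\dim_\C W_1}$ on the $M_2$ side and $2^{\dim_\C W_2}$ on the $M_1$ side.

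I would organize the write-up as: (i) the gluing construction of the $\text{Spin}^c$-structure and the bundles, with the stabilization convention made explicit; (ii) the splitting of $[M_1\# M_2]$ and of the cup product in top degree, reducing the integral to a sum of two integrals over $M_1$ and $M_2$; (iii) the local evaluation of $Q_2,Q_3$ on trivial line-bundle summands, giving the factors $0$ (for $V$) and $2$ (for $W$), which yields both displayed formulas at once. The main obstacle I anticipate is step (i): one must check that the line bundles comprising $V_i$ and $W_i$ can genuinely be chosen to restrict trivially to a disk $D_i\subset M_i$ containing the point where the sum is formed (true since line bundles are classified by $H^2$ and a disk is contractible), and that after gluing the relation $p_1(V+W-TM)=0$ in $H^4(M_1\# M_2;\mathbb{Z})$ modulo torsion still holds — this uses $\dim M_i\geq 4$ so that $H^4(M_1\# M_2)\to H^4(M_1)\oplus H^4(M_2)$ is injective modulo the class of the gluing sphere, which is zero in this range. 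Everything else is a bookkeeping exercise with the explicit formulas for $Q_1,Q_2,Q_3$ already recorded above.
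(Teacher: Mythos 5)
Your proposal is correct and follows essentially the same strategy as the paper's proof: glue the line bundles across the connected sum, use the splitting of $H^*(M_1\# M_2)$ into the two sides (with cross products vanishing) to reduce the index integral to a sum of integrals over $M_1$ and $M_2$, and evaluate the local factors $(1-e^{0})=0$ for $Q_2$ and $(e^{0}+e^{0})=2$ for $Q_3$ on trivial line-bundle summands. The one ingredient you leave implicit is the concrete realization of $V$ as a sum of line bundles over $M_1\#M_2$ that has no globally trivial summand yet restricts to $V_1$ over the $M_1$ side and to $V_2$ plus trivials over the $M_2$ side; the paper achieves this by pairing and tensoring, $V=\bigoplus_j L_{j1}\otimes L_{j2}$ with $L_{ji}$ the extension (trivial on the other side) of the $j$-th line-bundle summand of $V_i$, padded with trivials for $j>\dim V_i$.
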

\begin{proof}
  Let \(V_i=\bigoplus_{j=1}^{k_i}L_{ji}\) and \(W_i=\bigoplus_{j=1}^{k_i'}L_{ji}'\) for \(i=1,2\).
 Then the \(L_{ji},L_{ji}'\) extend uniquely to vector bundles over \(M_1\# M_2\), such that the restriction to \(M_k \), \(k\neq i\) is trivial.
We denote these extensions also by \(L_{ji},L_{ji}'\).

Let
 \begin{align*}
   V&=\bigoplus_{j=1}^{\max\{k_1,k_2\}}L_{j1}\otimes L_{j2}& &\text{and}&
   W&=\bigoplus_{i=1}^2\bigoplus_{j=1}^{k_i} L_{ji}',
 \end{align*}
where \(L_{ji}\) is the trivial complex line bundle for \(j>k_i\).

The cohomology ring of \(M_1\# M_2\) with coefficients in a ring \(R\) is isomorphic to
\begin{equation}
\label{eq:4}
  H^*(M_1;R)\times H^*(M_2;R)/I,
\end{equation}
where \(I\) is the ideal generated by \((1,-1)\) and \((\xi_1,-\xi_2)\).
Here \(\xi_i\) denotes the orientation class of \(M_i\).
Moreover, for the characteristic classes of \(M_1 \# M_2\) we have
\begin{align*}
  w_i(M_1\# M_2)&=w_i(M_1)+w_i(M_2),&p_i(M_1\# M_2)&= p_i(M_1)+p_i(M_2), & i&>0.
\end{align*}
Therefore there is a \(\text{Spin}^c\)-structure on \(M_1 \# M_2\) with \(c_1^c(M_1 \# M_2)=c_1^c(M_1)+c_1^c(M_2)\).

For the vector bundles \(V\) and \(W\) defined above, we have
\begin{align*}
  c_1(V)&=c_1(V_1)+c_1(V_2)=c_1^c(M_1)+c_1^c(M_2)=c_1^c(M_1\# M_2),\\
  p_1(V)&=\sum_{j=1}^{k_1}c_1(L_{j1})^2 + \sum_{j=1}^{k_2}c_1(L_{j2})^2=p_1(V_1)+p_1(V_2),\\
  p_1(W)&=p_1(W_1)+p_1(W_2).
\end{align*}
Therefore we have \(p_1(V+W-TM_1\# M_2)=0\).

Now we have assuming \(\dim V_1 \geq \dim V_2\),
\begin{align*}
  \varphi^c(M_1\# M_2;V,W)&=\langle e(V) Q_2'(V)Q_3(W)Q_1(TM_1\# M_2)\hat{A}(M_1\# M_2),[M_1\# M_2]\rangle\\
&= \langle e(\bigoplus_{i=k_2+1}^{k_1}L_{j1})( e(\bigoplus_{i=1}^{k_2}L_{j1}) + e(\bigoplus_{i=1}^{k_2} L_{j2}))\\
&\quad\quad Q_2'(V)Q_3(W)Q_1(TM_1\# M_2)\hat{A}(M_1\# M_2),[M_1\# M_2]\rangle.
\end{align*}
It follows from (\ref{eq:4}) that for \(i>0\) the \(i\)th Pontrjagin-class of \(W\) is given by \(p_i(W_1)+p_i(W_2)\). A similar statement holds for the Chern-classes of \(V\).

Since \(2^{-\dim_\C W}Q_2'(V)Q_3(W)Q_1(TM)\hat{A}(M)\) is a power series with constant term one in the Pontrjagin-classes of \(V\), \(W\) and \(TM\) whose coefficients do not depend on \(V\), \(W\) and \(TM\), it follows that, for \(i>0\), 
\begin{align*}
2^{-\dim_\C W}(Q_2'(V)Q_3(W)&Q_1(TM_1\# M_2)\hat{A}(M_1 \# M_2))_i\\ 
&=2^{-\dim_\C W_1}(Q_2'(V_1)Q_3(W_1)Q_1(TM_1)\hat{A}(M_1))_i\\
 &+2^{-\dim_\C W_2}(Q_2'(V_2)Q_3(W_2)Q_1(TM_2)\hat{A}(M_2))_i.  
\end{align*}
  Here \((Q_2'(V)Q_3(W)Q_1(TM))_i\) denotes the degree \(4i\) part of \(Q_2'(V)Q_3(W)Q_1(TM)\).

Now the statement follows from (\ref{eq:4}).
\end{proof}

\section{Two vanishing results for the Witten-genus}
\label{sec:van_witten}

In this section we prove vanishing results for the Witten-genus of a \(\text{Spin}\)-manifold \(M\) with \(p_1(M)=0\) such that a product \(M\times M'\) admits an action of a compact connected semi-simple Lie-group of high rank.

Our first result is as follows.

\begin{theorem}
\label{sec:two-vanish-results}
  Let \(M\) be a \(\text{Spin}\)-manifold such that \(p_1(M)\) is torsion.
  Moreover, let \(M'\) be a \(2n\)-dimensional \(\text{Spin}^c\)-manifold such that there are \(x_1,\dots,x_n\in H^2(M';\mathbb{Z})\) with
  \begin{enumerate}
  \item \(\sum_{i=1}^n x_i=c_1^c(M')\) modulo torsion,
  \item \(\sum_{i=1}^n x_i^2=p_1(M')\) modulo torsion,
  \item\label{item:6} \(\langle \prod_{i=1}^n x_i, [M']\rangle \neq 0\).
  \end{enumerate}
If there is an almost effective action of a  semi-simple simply connected  compact Lie-group \(G\) on \(M\times M'\) such that \(\rank G > \rank \langle x_1,\dots, x_n\rangle\), then the Witten-genus \(\varphi^c(M;0,0)\) of \(M\) vanishes.
\end{theorem}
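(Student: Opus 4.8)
The plan is to reduce the vanishing of the Witten-genus of $M$ to an application of Theorem~\ref{sec:twist-dirac-oper-8} for a suitable auxiliary $\text{Spin}^c$-manifold, twisting bundles, and group action. First I would form the product $N = M \times M'$, which carries the diagonal $\text{Spin}^c$-structure from Lemma~\ref{sec:prod-conn-sums-2}, and equip it with the bundles $V = p_2^* V'$ and $W = p_1^* TM \oplus p_2^* W'$, where on $M'$ we take $V' = \bigoplus_{i=1}^n L_i$ with $c_1(L_i) = x_i$ and $W' = \overline{V'} = \bigoplus L_i^*$ (so $W'$ is $\text{Spin}$, being $V' \oplus \overline{V'}$ up to the even-dimensional requirement; one arranges evenness by absorbing a trivial summand, or by noting $\dim_{\C} W' = n$ can be made even after stabilizing, as in Lemma~\ref{sec:prod-conn-sums}). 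Using hypotheses (1) and (2) together with $p_1(M)$ torsion, one checks $c_1(V) = c_1^c(N)$ modulo torsion and $p_1(V + W - TN) = p_1(V') + p_1(TM) + p_1(W') - p_1(TM) - p_1(TM') = \sum x_i^2 + \sum x_i^2 - p_1(M') = \sum x_i^2$ modulo torsion --- so I must instead take $W' $ so that $p_1(W')$ cancels $\sum x_i^2$; concretely set $W' = \bigoplus_{i=1}^n L_i^*$, giving $p_1(W') = \sum x_i^2$, hence $p_1(V+W-TN) = \sum x_i^2 - \sum x_i^2 = 0$ modulo torsion after using (2). Thus the hypotheses of Theorem~\ref{sec:twist-dirac-oper-8} on the bundle data are met.

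Next I would verify the group-theoretic hypothesis. The action of $G$ on $N = M \times M'$ is almost effective; since $G$ is semi-simple simply connected and compact it is in particular non-abelian (as $\rank G \geq 1$ and it is not a torus), and being simply connected the $G$-action on $H^*(N;\mathbb{Z})$ is automatically trivial, so the action is nice. One also needs $N^G \neq \emptyset$: since $\chi(M') \neq 0$ by hypothesis (\ref{item:6}) (the top intersection $\langle \prod x_i, [M']\rangle \neq 0$ forces $M'$ to have a $\text{Spin}^c$-Dirac-type nonzero index, hence nonzero Euler characteristic is not literally immediate --- I would instead argue $N^G \neq \emptyset$ via the structure of the maximal torus action, or note that any simply connected $G$ acting on a manifold with a fixed point of a maximal torus suffices, using that a nonzero $\hat{A}$-type index or $\langle \prod x_i,[M']\rangle \neq 0$ forces $\chi \neq 0$). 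This last point is the main obstacle: establishing $N^G \neq \emptyset$ cleanly; I expect this follows because the nonvanishing of the twisted index is what ultimately we derive a contradiction from, so the logical structure is: assume $\varphi^c(M;0,0) \neq 0$, then by the product formula Lemma~\ref{sec:prod-conn-sums-2}, $\varphi^c(N; V, W) = \varphi^c(M;0,0) \cdot \varphi^c(M'; V', W')$, and the second factor is, by the index computation, $2^{\dim_\C W'}\langle e^{c_1^c(M')/2}\prod(1-e^{-x_i})\cdots \rangle$ which to leading order in $q$ reduces to $\pm\langle \prod x_i, [M']\rangle \neq 0$; hence $\varphi^c(N;V,W) \neq 0$, forcing $\chi(N) \neq 0$ hence $N^G \neq \emptyset$, and Theorem~\ref{sec:twist-dirac-oper-8} applies.

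Then I would extract the rank contradiction. With $\varphi^c(N;V,W) \neq 0$, Corollary~\ref{sec:twist-dirac-oper-5} (applied to $G$, which is non-abelian connected with maximal torus $T_G$) shows that for every $x \in N^{T_G}$ the isotropy group is exactly $T_G$; in particular the action of $T_G$ near such a point has no trivial summand transverse to nothing --- more to the point, the weights of the $T_G$-representation $T_x N$ span a lattice of rank $\rank G$ inside $H^2(BT_G;\Q)$, and these weights must be visible in the equivariant characteristic classes, which by the $p_1^G$-being-pulled-back-from $BG$ argument (Lemma~\ref{sec:twist-dirac-oper-1}) are controlled by the classes $x_i$. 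Concretely, the lift of $L_i$ to $N$ has $T_G$-equivariant first Chern class restricting at $x$ to a weight lying in the span of the $x_i$'s image, and the equivariant condition $p_1^{T_G}(V+W-TN)$ pulled back from $BG$ forces $\sum (\text{weights of } TN)^2$ to lie in a subspace of dimension $\leq \rank\langle x_1,\dots,x_n\rangle$. Since $\rank G > \rank\langle x_1,\dots,x_n\rangle$, the map from the weight lattice of $T_G$ to this subspace has a kernel, i.e.\ some subcircle $S^1 \subset T_G$ acts trivially on the relevant cohomology --- but by Corollary~\ref{sec:twist-dirac-oper-5} it acts with isotropy exactly $T_G$ at $x$, meaning it acts non-trivially, a contradiction. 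Therefore the assumption $\varphi^c(M;0,0)\neq 0$ is untenable, and the Witten-genus of $M$ vanishes. The delicate point throughout is the bookkeeping of which bundle is $V$ and which is $W$ so that $c_1(V) = c_1^c$ and $p_1(V+W-TN)=0$ hold simultaneously modulo torsion; once the bundles are chosen as above this is a direct check using hypotheses (1) and (2) and $p_1(M)$ torsion.
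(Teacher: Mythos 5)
Your high-level strategy is the same as the paper's --- form the product $N = M \times M'$, use the multiplicativity of the twisted index to reduce the Witten-genus of $M$ to a nontrivial index on $N$, and then exploit the rank gap $\rank G > \rank\langle x_1,\dots,x_n\rangle$ to find a subcircle in a maximal torus on which the anomaly coefficient is negative. But there are two genuine errors in the execution.

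First, the bundle choice and the $p_1$ computation do not close. You set $V = p_2^*(\bigoplus L_i)$ and $W = p_1^*TM \oplus p_2^*(\bigoplus L_i^*)$, and claim $p_1(V+W-TN) \equiv 0$ modulo torsion. In fact $p_1(V) = p_2^*\sum x_i^2$, $p_1(W) = p_1^*p_1(TM) + p_2^*\sum x_i^2$, and $p_1(TN) = p_1^*p_1(TM)+p_2^*p_1(M')$, giving $p_1(V+W-TN) = p_2^*(2\sum x_i^2 - p_1(M')) \equiv p_2^*p_1(M') \not\equiv 0$ modulo torsion (using hypothesis (2)). Your prose even acknowledges the first attempt gives $\sum x_i^2$, but the proposed ``fix'' adds $\sum x_i^2$ again rather than subtracting it. The correct and much simpler choice is $V = p_2^*(\bigoplus L_i)$ and $W = 0$: then $p_1(V+W-TN) = p_2^*(\sum x_i^2 - p_1(M')) - p_1^*p_1(TM) \equiv -p_1^*p_1(M) \equiv 0$ modulo torsion, where the last step uses that $p_1(M)$ is torsion and hypothesis (2). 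This is the whole reason the hypothesis on $p_1(M)$ is there, and your construction never uses it.

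Second, the logical frame is off. You aim to apply Theorem~\ref{sec:twist-dirac-oper-8} and Corollary~\ref{sec:twist-dirac-oper-5}, but Theorem~\ref{sec:twist-dirac-oper-8} is stated for groups $G$ that are extensions $1\to T\to G\to W(G)\to 1$ with $W(G)$ finite and acting nontrivially on $T$ --- a simply connected semi-simple $G$ is not of this form (its maximal torus is not normal), and one would need to pass to $N_G(T)$ or a $\text{Pin}(2)$-subgroup, which you do not do, and which would anyway obscure the rank argument. The paper instead applies Dessai's Theorem~\ref{sec:twist-dirac-oper-9} directly: if $N^T = \emptyset$ the index vanishes by Lefschetz; otherwise at a $T$-fixed point $y$ one writes $p_1^T(\bigoplus p'^*L_i - TN)|_y = \sum a_i^2 - \sum v_i^2$, uses $\rank T > \rank\langle x_i\rangle$ to find $S^1 \subset T$ annihilating all $a_i$, and then $\rho(S^1,T)^*(\cdots) = -\sum(v_i|_{S^1})^2 < 0$ because the action is almost effective. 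That negativity is exactly the hypothesis of Dessai's theorem, so the $S^1$-equivariant and hence ordinary index vanishes. There is no contradiction to derive; your final paragraph incorrectly reads ``$a_i|_{S^1}=0$ for all $i$'' as ``$S^1$ acts trivially,'' and then manufactures a spurious contradiction with effectivity, whereas the nontriviality of the $S^1$-action on $N$ is precisely what makes $a<0$ and is a feature, not a bug. Likewise, your concern about $N^G\neq\emptyset$ is moot once one works with Theorem~\ref{sec:twist-dirac-oper-9}: if the maximal torus has no fixed points the index already vanishes.
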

\begin{proof}
Let \(L_i\), \(i=1,\dots,n\), be the line bundle over \(M'\) with \(c_1(L_i)=x_i\).
By Lemma~\ref{sec:twist-dirac-oper-17}, the natural map \(\iota^*:H^2_G(M\times M';\mathbb{Z})\rightarrow H^2(M\times M';\mathbb{Z})\) is an isomorphism.

Therefore by Corollary 1.2 of \cite[p. 13]{0346.57014} the \(G\)-action on \(M\times M'\) lifts into \(p'^*(L_i)\), \(i=1,\dots,n\).
Here \(p': M\times M'\rightarrow M'\) is the projection.
Moreover, by the above cited corollary and Lemma \ref{sec:twist-dirac-oper-12}, the action of every \(S^1\subset G\) lifts
 into the \(\text{Spin}^c\)-structure on \(M\times M'\) induced by the \(\text{Spin}\)-structure on \(M\) and the \(\text{Spin}^c\)-structure on \(M'\).

By Lemma \ref{sec:prod-conn-sums-2}, we have
\begin{equation*}
  \varphi^c(M\times M';\bigoplus_{i=1}^n p'^*L_i,0)=\varphi^c(M;0,0)\varphi^c(M';\bigoplus_{i=1}^n L_i,0).
\end{equation*}
By condition (\ref{item:6}), we have
\begin{align*}
  \varphi^c(M';\bigoplus_{i=1}^n L_i,0)&=\langle Q_1(TM')\prod_{i=1}^n x_i Q_2'(\bigoplus_{i=1}^nL_i) \hat{A}(M'),[M']\rangle\\
&= \langle \prod_{i=1}^n x_i,[M']\rangle\neq 0.
\end{align*}
Hence, \(\varphi^c(M;0,0)\) vanishes if and only if \(\varphi^c(M\times M';\bigoplus_{i=1}^n p'^*L_i,0)\) vanishes.

Let \(T\) be a maximal torus of \(G\).
If there are no \(T\)-fixed points in \(M\times M'\), then the Lefschetz-fixed point formula implies that this index vanishes.
Therefore we may assume that there is a \(T\)-fixed point \(y\in (M\times M')^T\).

As in the proof of  Lemma \ref{sec:twist-dirac-oper-1} one proves that
\begin{equation*}
  H^4(BG;\Q)\rightarrow H^4_G(M\times M';\Q)\rightarrow H^4(M\times M';\Q)
\end{equation*}
is exact.
Therefore there is an \(v\in H^4(BG;\Q)\) such that \(p_1^T(\bigoplus_{i=1}^n p'^*L_i - T(M\times M')) = \pi_T^*\rho(T,G)^*v\).
By Theorem \ref{sec:twist-dirac-oper-9}, it is sufficient to show that there is a homomorphism \(S^1\hookrightarrow T\) such that \(\rho(S^1,T)^*\rho(T,G)^*v=a x^2\), where \(x\in H^2(BS^1;\mathbb{Z})\) is a generator and \(a\in \mathbb{Z}\), \(a<0\).

We have
\begin{equation*}
  \rho(T,G)^*v=p_1^T(\bigoplus_{i=1}^n p'^*L_i - T(M\times M'))|_y = \sum_{i=1}^n a_i^2 -\sum v_i^2,
\end{equation*}
where the \(a_i\in H^2(BT;\mathbb{Z})\), \(i=1,\dots,n\), are the weights of the \(T\)-representations \(p'^*L_i|_y\) and the \(v_i\in H^2(BT;\mathbb{Z})\) are the weights of the \(T\)-representation \(T_y(M\times M')\).

Since \(\rank T>\rank \langle x_1,\dots,x_m\rangle\) and \(a_i=(\rho(T,G)^*(\iota^*)^{-1}p'^*(x_i))|_y\) for \(i=1,\dots,n\), there is a homomorphism \(S^1\hookrightarrow T\) such that \(\rho(S^1,T)^*a_i=0\) for \(i=1,\dots,n\).

For this \(S^1\) we have \(\rho(S^1,T)^*v=a x^2\) with \(a\in \mathbb{Z}\), \(a<0\), because the \(G\)-action is almost effective.
\end{proof}

We will see later in Lemma~\ref{sec:twist-dirac-oper-6} that those \(2n\)-dimensional quasitoric manifolds whose orbit polytopes admit facet colorings with \(n\) colors are examples of manifolds which satisfy the assumptions on \(M'\) in the above theorem.
Other examples of such manifolds are given by those manifolds whose tangent bundle is isomorphic to a sum of complex line bundles and which have non-zero Euler-characteristic.
In particular, homogeneous spaces of the form \(H/T\) with \(H\) a semi-simple compact connected Lie-group and \(T\) a maximal torus of \(H\) are examples of such manifolds.
Since in this case we have \(b_2(H/T)=\rank H\) we get the following corollary.

\begin{cor}
\label{sec:vanish-result-witt-1}
  Let \(M\) be a \(\text{Spin}\)-manifold with \(p_1(M)=0\) and \(H\) a semi-simple compact connected Lie-group.
  If there is an almost effective action of a semi-simple compact connected Lie-group \(G\) on \(M\times H/T\) such that \(\rank G>\rank H\),
  then the Witten-genus of \(M\) vanishes.
\end{cor}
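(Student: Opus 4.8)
The plan is to deduce Corollary~\ref{sec:vanish-result-witt-1} directly from Theorem~\ref{sec:two-vanish-results} by checking that $M' = H/T$ satisfies the three numbered hypotheses on $M'$, and that the rank condition $\rank G > \rank H$ translates into the rank condition $\rank G > \rank\langle x_1,\dots,x_n\rangle$ required there. Set $2n = \dim H/T = \dim H - \rank H$, and recall the classical fact that the tangent bundle of $H/T$ splits, after complexification, as a sum of complex line bundles indexed by the roots of $H$: concretely $T(H/T)\otimes\C \cong \bigoplus_{\alpha} L_\alpha$ where $\alpha$ runs over the roots, with the positive roots $\alpha_1,\dots,\alpha_n$ (where $n$ is the number of positive roots, which equals $\frac12(\dim H - \rank H)$) giving line bundles $L_{\alpha_i}$ whose first Chern classes $x_i := c_1(L_{\alpha_i})$ are the standard generators of $H^2(H/T;\mathbb Z)\cong\mathbb Z^{\rank H}$.

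The three conditions are then verified as follows. First, since $H/T$ is a complex (indeed projective) manifold, its canonical $\mathrm{Spin}^c$-structure has $c_1^c(H/T) = c_1(T(H/T)) = \sum_{\alpha>0}(c_1(L_\alpha) + c_1(L_{-\alpha})) = \sum_{i=1}^n x_i$ modulo torsion, because $L_{-\alpha} \cong L_\alpha^*$ contributes $-x_i$ but the complex tangent bundle uses $L_\alpha$ for all roots, not just positive ones; more carefully, $T(H/T)$ as a complex bundle is $\bigoplus_{\alpha>0} L_\alpha$ once one fixes an invariant complex structure, so $c_1 = \sum_{i=1}^n x_i$. Second, $p_1(H/T) = p_1(T(H/T)) = \sum_{i=1}^n x_i^2$ by the Whitney formula for a sum of line bundles. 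Third, $\langle \prod_{i=1}^n x_i, [H/T]\rangle = \langle e(T(H/T)), [H/T]\rangle = \chi(H/T) = \# W(H) \neq 0$, since the top Chern class of the complex tangent bundle is the Euler class and the Euler characteristic of a flag manifold is the order of the Weyl group. Finally, $\langle x_1,\dots,x_n\rangle = H^2(H/T;\mathbb Z)$ has rank $b_2(H/T) = \rank H$, so the hypothesis $\rank G > \rank H$ is exactly the hypothesis $\rank G > \rank\langle x_1,\dots,x_n\rangle$ of Theorem~\ref{sec:two-vanish-results}. Applying that theorem to $M\times H/T$ gives that $\varphi^c(M;0,0)$ vanishes, and this is the Witten-genus of $M$ by the remark following the definition of $\varphi^c$, since $M$ is $\mathrm{Spin}$ with $p_1(M)=0$.

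The main obstacle is purely bookkeeping about the flag manifold $H/T$: one must pin down the complex structure so that the line bundle decomposition of $T(H/T)$ is indexed by the positive roots (not all roots), confirm that the resulting first Chern classes $x_i$ genuinely generate $H^2(H/T;\mathbb Z)$ (this uses the standard presentation of $H^*(H/T)$ via the Borel picture, or equivalently that $H^2(H/T;\mathbb Z)\cong H^2(BT;\mathbb Z)/H^2(BH;\mathbb Z)\cong \mathbb Z^{\rank H}$ since $H$ semi-simple forces $H^2(BH;\mathbb Z)$ torsion), and identify $\chi(H/T)=\#W(H)$. None of these steps is deep, but I would state them carefully since Theorem~\ref{sec:two-vanish-results} requires the classes $x_i$ to exist over $\mathbb Z$ and the condition (3) on the fundamental class to hold on the nose. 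Once these are in place the corollary is immediate.
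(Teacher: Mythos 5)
Your proof is correct and follows exactly the route the paper takes: the paper deduces the corollary from Theorem~\ref{sec:two-vanish-results} by observing, in the paragraph immediately preceding the corollary, that $H/T$ has tangent bundle a sum of complex line bundles with non-zero Euler characteristic and $b_2(H/T)=\rank H$, which is precisely the verification you carry out (the only slight imprecision in your write-up is calling the $n$ positive-root classes $x_i$ ``the standard generators'' of $H^2(H/T;\mathbb Z)\cong\mathbb Z^{\rank H}$, since $n>\rank H$ in general, but you correctly use only that their span has rank $\rank H$, which is what Theorem~\ref{sec:two-vanish-results} requires).
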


As an application of Corollary~\ref{sec:vanish-result-witt-1} we give a new proof for a theorem of Hauschild.

\begin{cor}[{\cite[Theorem 9, p. 552]{0623.57024}}]
  Let \(H\) be a semi-simple compact connected Lie-group with maximal torus \(T\). Then we have \(N^{ss}(H/T)=\dim H\).
\end{cor}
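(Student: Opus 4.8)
The plan is to prove the two inequalities $N^{ss}(H/T)\geq\dim H$ and $N^{ss}(H/T)\leq\dim H$ separately. The first is immediate: $H$ acts on $H/T$ by left translation, and the kernel of this action is $\bigcap_{h\in H}hTh^{-1}=Z(H)$, which is finite because $H$ is semi-simple; hence the adjoint group $H/Z(H)$ is a semi-simple compact connected Lie-group of dimension $\dim H$ acting effectively on $H/T$.

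For the second inequality I would first observe, as already noted in the paragraph preceding the statement, that $H/T$ fits the setup of Sections~\ref{sec:twisted} and~\ref{sec:van_witten}. Choosing a set $R^{+}$ of positive roots of $H$, the tangent bundle of $H/T$ is $\bigoplus_{\alpha\in R^{+}}L_{\alpha}$, where $L_{\alpha}$ is the complex line bundle with $c_{1}(L_{\alpha})=\alpha\in H^{2}(H/T;\mathbb{Z})$, so $H/T$ is a $2n$-dimensional complex, hence $\text{Spin}^{c}$, manifold with $n=|R^{+}|$. With $V=\bigoplus_{\alpha\in R^{+}}L_{\alpha}$ and $W=0$ one has $c_{1}(V)=\sum_{\alpha}\alpha=c_{1}(H/T)=c_{1}^{c}(H/T)$ and $p_{1}(V+W-T(H/T))=\sum_{\alpha}\alpha^{2}-\sum_{\alpha}\alpha^{2}=0$, so $V,W$ satisfy the hypotheses of Theorem~\ref{sec:twist-dirac-oper-8}, and the index formula of Section~\ref{sec:twisted} gives
\[
  \varphi^{c}(H/T;V,0)=\Bigl\langle\prod_{\alpha\in R^{+}}\alpha,[H/T]\Bigr\rangle=\langle e(T(H/T)),[H/T]\rangle=\chi(H/T)=\#W(H)\neq 0 .
\]

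Now let $G$ be an arbitrary semi-simple compact connected Lie-group acting effectively on $H/T$; the goal is $\dim G\leq\dim H$. Applying Corollary~\ref{sec:vanish-result-witt-1} with $M$ a point --- which is $\text{Spin}$ with $p_{1}=0$ and Witten-genus $\varphi^{c}(\mathrm{pt};0,0)=1\neq 0$ --- to the action of $G$ on $\mathrm{pt}\times H/T=H/T$ shows that $\rank G>\rank H$ is impossible, so $\rank G\leq\rank H$. On the other hand, since $\varphi^{c}(H/T;V,0)\neq 0$, case~(1) of Corollary~\ref{sec:twist-dirac-oper-3}, applied to $M=H/T$ and the bundles $V,W$ above, forbids $\dim G-\rank G>2n=\dim H-\rank H$. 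Adding these two inequalities yields $\dim G\leq\dim H$, and together with the lower bound this gives $N^{ss}(H/T)=\dim H$.

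The argument is short once the two cited corollaries are available, so I do not expect a serious obstacle; the one place to be careful is the bookkeeping with ranks. Corollary~\ref{sec:twist-dirac-oper-3} only controls the orbit dimension $\dim G-\rank G$, so it is precisely the rank bound $\rank G\leq\rank H$ coming from the non-vanishing of the Witten-genus of a point that is needed to convert the orbit estimate into the sharp bound $\dim G\leq\dim H$; dropping either half would only yield a bound roughly twice as large. I would also double-check the evaluation $\varphi^{c}(H/T;V,0)=\#W(H)$ --- which amounts to the fact that the twisted index with $V=T(H/T)$ computes the Euler characteristic --- since both hypotheses ``$\varphi^{c}\neq 0$'' and ``$\chi\neq 0$'' are used.
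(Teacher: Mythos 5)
Your argument is correct and follows essentially the same route as the paper's own proof: combine case~(1) of Corollary~\ref{sec:twist-dirac-oper-3} (giving \(\dim G - \rank G \leq \dim H - \rank H\)) with Corollary~\ref{sec:vanish-result-witt-1} applied to \(M = \mathrm{pt}\) (giving \(\rank G \leq \rank H\)), then add. You supply some details the paper leaves implicit — the explicit verification that \(\varphi^c(H/T;V,0) = \chi(H/T) = \#W(H)\) via \(V = T(H/T) = \bigoplus_{\alpha \in R^+} L_\alpha\), and the observation that the left action factors through the effective action of \(H/Z(H)\) to get the lower bound — but the core strategy is identical.
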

\begin{proof}
  Let \(G\) be a semi-simple compact connected Lie-group which acts effectively on \(H/T\).
  Since the tangent bundle of \(H/T\) splits as a sum of complex line bundles and \(\chi(H/T)\neq 0\), there is a twisted Dirac operator with non-vanishing index on \(H/T\).
  Therefore, by the first case in Corollary~\ref{sec:twist-dirac-oper-3}, we have
  \begin{equation*}
    \dim G - \rank G\leq \dim H/T=\dim H - \rank H.
  \end{equation*}
By Corollary~\ref{sec:vanish-result-witt-1} applied in the case \(M=pt\), we see that \(\rank G\leq \rank H\).
Therefore it follows that \(\dim G\leq \dim H\).
Since there is an obvious action of \(H\) on \(H/T\), the statement follows.
\end{proof}

Similarly to Theorem~\ref{sec:two-vanish-results} we can prove the following vanishing result for actions of simple compact connected Lie-groups of high rank.

\begin{theorem}
\label{sec:two-vanish-results-3}
  Let \(M\) be a \(\text{Spin}\)-manifold such that \(p_1(M)\) is torsion.
  Moreover, let \(M'\) be a \(2n\)-dimensional \(\text{Spin}^c\)-manifold such that \(p_1(M')\) is torsion and there are \(x_1,\dots,x_n\in H^2(M';\mathbb{Z})\) and \(1=n_1<n_2<\dots<n_{k+1}=n+1\) with
  \begin{enumerate}
  \item \(\sum_{i=1}^n x_i =c_1^c(M')\) modulo torsion,
  \item \(\sum_{i=n_j}^{n_{j+1}-1}x_i^2\) is torsion, for \(j=1,\dots,k\),
  \item \(\langle \prod_{i=1}^n x_i, [M']\rangle \neq 0\).
  \end{enumerate}
If there is an almost effective action of a  simple simply connected compact Lie-group \(G\) on \(M\times M'\) such that \(\rank G>\rank \langle x_{n_j},\dots,x_{n_{j+1}-1}\rangle\) for all \(j=1,\dots,k\), then the Witten-genus \(\varphi^c(M;0,0)\) of \(M\) vanishes.
\end{theorem}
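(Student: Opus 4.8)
The plan is to follow the same strategy as in the proof of Theorem~\ref{sec:two-vanish-results}, but now exploiting the finer decomposition of the $x_i$ into $k$ blocks indexed by $n_1<\dots<n_{k+1}$. First I would set $L_i$ to be the line bundle over $M'$ with $c_1(L_i)=x_i$, form $V=\bigoplus_{i=1}^n L_i$, and note that by conditions (1) and (2) together with $p_1(M')$ being torsion we have $c_1(V)=c_1^c(M')$ and $p_1(V-TM')=0$ modulo torsion, while $p_1(M)$ torsion means $M\times M'$ carries a $\text{Spin}^c$-structure with $V\oplus 0$ and $0$ playing the roles of $V$ and $W$ in Theorem~\ref{sec:twist-dirac-oper-8}. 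As before, Lemma~\ref{sec:twist-dirac-oper-17} gives that $\iota^*:H^2_G(M\times M';\mathbb{Z})\to H^2(M\times M';\mathbb{Z})$ is an isomorphism (here $G$ is simply connected), so by Corollary~1.2 of \cite{0346.57014} the $G$-action lifts into each $p'^*L_i$ and into the $\text{Spin}^c$-structure on $M\times M'$. Then Lemma~\ref{sec:prod-conn-sums-2} and condition (3) give
\begin{equation*}
  \varphi^c(M\times M';\textstyle\bigoplus_{i=1}^n p'^*L_i,0)=\varphi^c(M;0,0)\cdot\langle \textstyle\prod_{i=1}^n x_i,[M']\rangle,
\end{equation*}
so it suffices to show the left-hand side vanishes.

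Next I would invoke Theorem~\ref{sec:twist-dirac-oper-9}: I need, for a maximal torus $T$ of $G$ and some circle $S^1\hookrightarrow T$, that $p_1^{S^1}(V-T(M\times M'))=a\,x^2$ with $a<0$, where I may assume there is a $T$-fixed point $y$ (otherwise the index vanishes by Lefschetz). Exactly as in the previous proof, the exactness of $H^4(BG;\Q)\to H^4_G(M\times M';\Q)\to H^4(M\times M';\Q)$ together with $G$ simple (hence $H^i(BG;\Q)=0$ for $i=1,2,3$ and $H^4(BG;\Q)$ one-dimensional, spanned by the class pulling back to the generator on each $S^1\subset T$) shows that $p_1^T(V-T(M\times M'))$ is pulled back from $H^4(BT;\Q)^{W(G)}$, which after restriction to $y$ reads $\sum_{i=1}^n a_i^2-\sum_\ell v_\ell^2$ with $a_i=(\rho(T,G)^*(\iota^*)^{-1}p'^*x_i)|_y$.

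The new ingredient, and the place where condition (2) in the blocked form is used, is the following. Because $\sum_{i=n_j}^{n_{j+1}-1}x_i^2$ is torsion, for each block $j$ we have $\sum_{i=n_j}^{n_{j+1}-1}a_i^2=0$ in $H^4(BT;\Q)$; combined with $W(G)$-invariance, the span $\langle a_{n_j},\dots,a_{n_{j+1}-1}\rangle\subset H^2(BT;\Q)$ has rank equal to $\rank\langle x_{n_j},\dots,x_{n_{j+1}-1}\rangle$, which by hypothesis is strictly less than $\rank T$ for every $j$. Hence the union of the spans over all $j$ — a finite union of proper subspaces — does not exhaust $H^2(BT;\Q)$ (over $\Q$, a vector space is never a finite union of proper subspaces), so there is a rational hyperplane, and thus an integral homomorphism $S^1\hookrightarrow T$, annihilating all the $a_i$ simultaneously. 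Actually one must be slightly more careful: I want a single $S^1$ with $\rho(S^1,T)^*a_i=0$ for \emph{all} $i=1,\dots,n$; but $\langle a_1,\dots,a_n\rangle=\sum_j\langle a_{n_j},\dots,a_{n_{j+1}-1}\rangle$ could have rank up to $\sum_j\rank\langle x_{n_j},\dots,x_{n_{j+1}-1}\rangle$, which need not be $<\rank T$. So instead the argument must be run \emph{block by block}: for each $j$ choose $S^1_j\hookrightarrow T$ killing the $a_i$ in block $j$, and deduce from Theorem~\ref{sec:twist-dirac-oper-9} applied to the partial bundle decomposition that a block-$j$ factor of $\varphi^c$ vanishes — this requires rewriting the index as a product over blocks, which is legitimate since $V=\bigoplus_j V_j$ with $V_j=\bigoplus_{i=n_j}^{n_{j+1}-1}L_i$ and $p_1(V_j)=\sum_{i=n_j}^{n_{j+1}-1}x_i^2$ is torsion for each $j$, so each $V_j-(\text{dim})$ contributes a genuinely "balanced" elliptic factor. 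Concretely, for the chosen $S^1_j$ we get $\rho(S^1_j,T)^*v_j=a\,x^2$ with $a<0$ (using almost effectiveness and simplicity of $G$, so the $v_\ell$ cannot all be annihilated), and Theorem~\ref{sec:twist-dirac-oper-9} forces the corresponding equivariant index — hence the non-equivariant index — to vanish; since $k\geq1$, at least one block gives vanishing, and by the multiplicativity the whole index $\varphi^c(M\times M';\bigoplus p'^*L_i,0)$ vanishes, whence $\varphi^c(M;0,0)=0$.

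The main obstacle is precisely this bookkeeping: organizing the twisted index so that the $k$ blocks decouple enough that a rank inequality on a single block suffices to kill that block's contribution, while checking that $G$ simple (not merely semisimple) is exactly what makes $H^4(BG;\Q)$ one-dimensional so that the single class $\alpha$ restricts compatibly on every $S^1\subset T$, and that the torsion hypotheses on $p_1(M)$, $p_1(M')$ and the partial sums $\sum_{i=n_j}^{n_{j+1}-1}x_i^2$ are enough to guarantee $p_1^T$ of the relevant virtual bundle is pulled back from $BG$ rather than merely from $BT$. I expect the rest — the lifting statements, the product formula, the Lefschetz reduction — to be essentially verbatim from Theorem~\ref{sec:two-vanish-results}.
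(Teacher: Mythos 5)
Your setup is correct and matches the paper: form $V=\bigoplus L_i$, observe the index $\varphi^c(M\times M';\bigoplus p'^*L_i,0)$ vanishes iff the Witten genus does, reduce by Lefschetz to the case where $T$ has a fixed point $y$, and use exactness of $H^4(BG;\Q)\to H^4_G(M\times M';\Q)\to H^4(M\times M';\Q)$ to pull $p_1^T$ back from $BG$. You also correctly identify that the na\"{i}ve approach (find one $S^1$ killing all $a_i$ at once) fails because the blocks' spans can sum to something of full rank. But from there you go off-track in two places.

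First, a technical misstatement: you write that $\sum_{i=n_j}^{n_{j+1}-1}a_i^2=0$ in $H^4(BT;\Q)$ because the corresponding sum of $x_i^2$ is torsion. That is not what follows. What the exactness of $H^4(BG;\Q)\to H^4_G\to H^4$ gives (applied to the virtual bundle $\bigoplus_{i=n_j}^{n_{j+1}-1}p'^*L_i$ alone, which is possible precisely because $\sum_{i}x_i^2$ is torsion on that block) is that $\sum_{i=n_j}^{n_{j+1}-1}a_i^2$ equals $\rho(T,G)^*v_j'$ for some $v_j'\in H^4(BG;\Q)$; in other words it is $W(G)$-invariant in $H^4(BT;\Q)$, but not a priori zero. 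Second, and more seriously, your proposed fix --- factoring the index $\varphi^c$ as a product over blocks $V_j$ --- is not valid. Only the $Q_2$-factor multiplies over line bundles; $c_1^c(M\times M')/2$, $Q_1(T(M\times M'))$, and $\hat{A}(M\times M')$ do not decompose into block pieces, so the twisted index is not a product of block indices, and Theorem~\ref{sec:twist-dirac-oper-9} cannot be applied ``one block at a time'' in the way you describe.

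The idea you are missing is what the paper does with the $W(G)$-invariance and simplicity of $G$: for each block, pick an $S^1\subset T$ with $\rho(S^1,T)^*a_i=0$ for all $i$ in that block (this uses only the rank hypothesis on that one block). Then since $\sum_i a_i^2$ is $W(G)$-invariant, $0=\rho(wS^1w^{-1},T)^*\sum_i a_i^2=\sum_i\bigl(\rho(wS^1w^{-1},T)^*a_i\bigr)^2$ for every $w\in W(G)$, and because $H^*(BS^1;\Q)$ is a polynomial ring this forces $\rho(wS^1w^{-1},T)^*a_i=0$ for every $w$. Since $G$ is simple, $T$ has no nontrivial $W(G)$-invariant subtorus, so the conjugates $wS^1w^{-1}$ generate $T$ and hence each $a_i$ vanishes identically in $H^2(BT;\Q)$. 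Running this for each block $j$ shows all the $a_i$ vanish, after which $\rho(S^1,T)^*\rho(T,G)^*v=-\sum v_\ell^2\,x^2$ is strictly negative for \emph{every} nontrivial $S^1\hookrightarrow T$ (by almost effectiveness), and Theorem~\ref{sec:twist-dirac-oper-9} kills the index. So the block structure is not used to factor the index; it is used so that the rank hypothesis per block is strong enough, together with $W(G)$-invariance and simplicity, to conclude the $a_i$ are actually zero.
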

\begin{proof}
  The relations between the \(G\)-equivariant and non-equivariant cohomology of \(M\times M'\) is as described in the proof of Theorem~\ref{sec:two-vanish-results}.
  We consider the same index \(\varphi^c(M\times M';\bigoplus_{i=1}^n p'^*L_i,0)\) as in the proof of that theorem.
 It vanishes if and only if the Witten-genus of \(M\) vanishes.

Let \(T\) be a maximal torus of \(G\).
We may assume that there is a \(T\)-fixed point \(y\) in \(M\times M'\).

As in the proof of Theorem~\ref{sec:two-vanish-results} one sees that there is an \(v\in H^4(BG;\Q)\) such that \(p_1^T(\bigoplus_{i=1}^n p'^*L_i - T(M\times M'))= \pi_T^*\rho(T,G)^*v\).
By Theorem \ref{sec:twist-dirac-oper-9}, it is sufficient to show that there is a homomorphism \(S^1\hookrightarrow T\) such that \(\rho(S^1,T)^*\rho(T,G)^*v=a x^2\), where \(x\in H^2(BS^1;\mathbb{Z})\) is a generator and \(a\in \mathbb{Z}\), \(a<0\).

We have
\begin{equation*}
  \rho(T,G)^*v=p_1^T(\bigoplus_{i=1}^n p'^*L_i - T(M\times M'))|_y = \sum_{i=1}^n a_i^2 -\sum v_i^2,
\end{equation*}
where the \(a_i\in H^2(BT;\mathbb{Z})\), \(i=1,\dots,n\) are the weights of the \(T\)-representations \(p'^*L_i|_y\) and the \(v_i\in H^2(BT;\mathbb{Z})\) are the weights of the \(T\)-representation \(T_y(M\times M')\).

We will show that the \(a_i\), \(i=1,\dots,n\), vanish.

Let \(1\leq j\leq k\).
Since \(H^4(BG;\Q)\rightarrow H^4_G(M\times M';\Q)\rightarrow H^4(M\times M';\Q)\) is exact, there is an \(v_j'\in H^4(BG;\Q)\) such that \(p_1^T(\bigoplus_{i=n_j}^{n_{j+1}-1}p'^*L_i)=\pi_T^*\rho(T,G)^*v_j'\).
Therefore we have
\begin{equation*}
  \sum_{i=n_j}^{n_{j+1}-1}a_i^2 = p_1^T(\bigoplus_{i=n_j}^{n_{j+1}-1}p'^*L_i)|_y=\rho(T,G)^*v_j'.
\end{equation*}
Therefore \(\sum_{i=n_j}^{n_{j+1}-1}a_i^2\) is invariant under the action of the Weyl-group \(W(G)\) of \(G\) on \(H^4(BT;\mathbb{Q})\).

Because \(\dim T>\rank \langle x_{n_j},\dots,x_{n_{j+1}-1}\rangle\) and \(a_i=(\rho(T,G)^*(\iota^*)^{-1}p'^*(x_i))|_y\),
there is an \(S^1\subset T\) such that \(\rho(S^1,T)^*a_i=0\) for \(i=n_j,\dots,n_{j+1}-1\). 
Since \(\sum_{i=n_j}^{n_{j+1}-1}a_i^2\in H^4(BT;\mathbb{Q})\) is \(W(G)\)-invariant, it follows that, for all \(w\in W(G)\),
\begin{equation*}
   0=\rho(wS^1w^{-1},T)^*\sum_{i=n_j}^{n_{j+1}-1}a_i^2=\sum_{i=n_j}^{n_{j+1}-1}(\rho(wS^1w^{-1},T)^*a_i)^2.
\end{equation*}
Since \(H^*(BS^1;\mathbb{Z})=\mathbb{Z}[x]\), this implies that \(\rho(wS^1w^{-1},T)^*a_i=0\) for all \(i=n_j,\dots,n_{j+1}-1\).
Because \(G\) is simple, there are no non-trivial \(W(G)\)-invariant subtori in \(T\).
Therefore we have \(T=\langle wS^1w^{-1};w\in W(G)\rangle\).
Hence, all \(a_i\in H^2(BT;\mathbb{Z})\), \(i=n_j,\dots,n_{j+1}-1\), \(j=1,\dots,k\), vanish.

Hence, \(\rho(S^1,T)^*\rho(T,G)^*v=a x^2\) with \(a<0\) for all non-trivial homomorphisms \(S^1\hookrightarrow T\).
\end{proof}

Examples of those manifolds which satisfy the assumptions on \(M'\) in the above theorem are manifolds with non-vanishing Euler-characteristic whose tangent bundle is isomorphic to a direct sum of complex line bundles \(L_1,\dots,L_n\) such that there are \(1=n_1<n_2<\dots<n_{k+1}=n+1\) with \(p_1(\bigoplus_{i=n_j}^{n_{j+1}-1}L_i)=0\) for all \(j=1,\dots,k\).

If \(H\) is a simple compact connected Lie-group with maximal torus \(T\), then all Pontrjagin classes of \(H/T\) are torsion. Therefore we get the following corollary.

\begin{cor}
\label{sec:two-vanish-results-1}
  Let \(M\) be a \(\text{Spin}\)-manifold with \(p_1(M)=0\) and \(H_1,\dots,H_k\) be simple compact connected Lie-groups with maximal tori \(T_1,\dots,T_k\).
  If there is an almost effective action of a simple compact connected Lie-group \(G\) on \(M\times \prod_{i=1}^k H_i/T_i\) such that \(\rank G>\rank H_i\) for all \(i=1,\dots, k\),
  then the Witten-genus of \(M\) vanishes.
\end{cor}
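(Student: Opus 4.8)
The plan is to exhibit $M' = \prod_{i=1}^k H_i/T_i$ as an instance of the manifold denoted $M'$ in Theorem~\ref{sec:two-vanish-results-3} and then simply quote that theorem. First I would recall the root-space description of the tangent bundle of a generalized flag manifold: each $H_i/T_i$ is a complex manifold and its tangent bundle splits as a direct sum of complex line bundles $T(H_i/T_i)\cong\bigoplus_{\alpha\in R_i^+}L_{i,\alpha}$ indexed by the positive roots $R_i^+$ of $H_i$, where $c_1(L_{i,\alpha})$ is the image of $\alpha$ under the natural map $H^2(BT_i;\mathbb{Z})\to H^2(H_i/T_i;\mathbb{Z})$. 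Pulling these bundles back along the projections $M'\to H_i/T_i$ exhibits $TM'$ as a direct sum of complex line bundles $L_1,\dots,L_n$ with $2n=\dim M'$, which I would order so that those pulled back from the same factor $H_j/T_j$ are consecutive; this yields a partition $1=n_1<\dots<n_{k+1}=n+1$ in which the $j$-th block corresponds to $H_j/T_j$. Set $x_\ell=c_1(L_\ell)$.

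Next I would verify the hypotheses of Theorem~\ref{sec:two-vanish-results-3}. Since $M'$ is almost complex it carries its canonical $\text{Spin}^c$-structure, for which $c_1^c(M')=c_1(TM')=\sum_{\ell=1}^n x_\ell$; this gives condition (1) on the nose. For condition (2), the block sum $\sum_{\ell=n_j}^{n_{j+1}-1}x_\ell^2$ is the pullback of $p_1(T(H_j/T_j))=\sum_{\alpha\in R_j^+}\alpha^2$, and this is a torsion class because $H_j$ is simple: the element $\sum_{\alpha\in R_j^+}\alpha^2=\tfrac12\sum_{\alpha\in R_j}\alpha^2$ is $W(H_j)$-invariant, hence lies in the ideal generated by the positive-degree Weyl invariants and therefore vanishes in $H^4(H_j/T_j;\Q)$ (this is the remark quoted just before the statement). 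For condition (3), $\prod_{\ell=1}^n x_\ell=e(TM')$, so $\langle\prod_\ell x_\ell,[M']\rangle=\chi(M')=\prod_i\chi(H_i/T_i)=\prod_i\# W(H_i)\neq 0$. Finally $p_1(M')=\sum_j p_1(H_j/T_j)$ is torsion and $p_1(M)=0$ is torsion, as required.

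It then remains to match the rank hypothesis. Within the $j$-th block the classes $x_{n_j},\dots,x_{n_{j+1}-1}$ are the images of the positive roots of $H_j$, which span $H^2(H_j/T_j;\Q)$ rationally, so $\rank\langle x_{n_j},\dots,x_{n_{j+1}-1}\rangle=\rank H_j$. After replacing $G$ by its universal covering group --- which is simple, simply connected and compact, acts on $M\times M'$ almost effectively, and has the same rank --- the assumption $\rank G>\rank H_i$ for all $i$ is precisely the rank condition in Theorem~\ref{sec:two-vanish-results-3}. That theorem then yields $\varphi^c(M;0,0)=0$, i.e.\ the Witten-genus of $M$ vanishes.

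The one step that is not pure bookkeeping is the vanishing $\sum_{\alpha\in R_j^+}\alpha^2=0$ in $H^4(H_j/T_j;\Q)$, and this is exactly where simplicity of each $H_j$ is used --- for a merely semisimple factor one would need the finer blockwise statement, which is why the single simple acting group $G$ of Theorem~\ref{sec:two-vanish-results-3} has to be paired here with simple factors $H_i$. Everything else is the product formula (Lemma~\ref{sec:prod-conn-sums-2}) packaged inside Theorem~\ref{sec:two-vanish-results-3}, applied to the example already isolated in the paragraph preceding the corollary.
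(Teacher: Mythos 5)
Your proof is correct and takes essentially the same route as the paper: the paper gives no separate argument for this corollary beyond the two sentences preceding it, which note that the $H_i/T_i$ satisfy the hypotheses of Theorem~\ref{sec:two-vanish-results-3} because their tangent bundles split into line bundles indexed by roots, $\chi(H_i/T_i)\neq 0$, and their Pontrjagin classes are torsion; you have filled in exactly those verifications. One small imprecision in your closing remark: the vanishing of $\sum_{\alpha\in R_j^+}\alpha^2$ in $H^4(H_j/T_j;\Q)$ follows from Borel's theorem together with Weyl-invariance of that sum, which already holds for any compact connected semisimple $H_j$; simplicity of the $H_j$ is not what makes that step work, but rather aligns the rank hypothesis $\rank G>\rank H_i$ cleanly with the blocks of the partition (a semisimple $H_j$ would be handled by refining the partition into its simple factors).
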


The Corollaries~\ref{sec:vanish-result-witt-1} and \ref{sec:two-vanish-results-1} can be used to find an upper bound for the semi-simple symmetry degree of \(M\times H/T\), where \(M\) is a \(\text{Spin}\)-manifold with \(p_1(M)=0\) and non-vanishing Witten-genus and \(H\) is a semi-simple compact Lie-group with maximal torus \(T\).
To give this upper bound we need the following constants.
For \(l\geq 1\) let
\begin{equation*}
  \alpha_l=\max\left\{\frac{\dim G}{\rank G}; \; G \text{ a simple compact Lie-group with } \rank G \leq l\right\}.
\end{equation*}
The values of the \(\alpha_l\)'s are listed in Table \ref{tab:erste}.

\begin{table}
  \centering
  \begin{tabular}{|c|c|c|}
    \(l\)&\(\alpha_l\)& \(G_l\)\\\hline\hline
    \(1\)& \(3\)&\(\text{Spin}(3)\)\\\hline
    \(2\)& \(7\)&\(G_2\)\\\hline
    \(3\)& \(7\)&\(\text{Spin}(7), Sp(3)\)\\\hline
    \(4\)& \(13\)&\(F_4\)\\\hline
    \(5\)& \(13\)& none\\\hline
    \(6\)& \(13\)& \(E_6, \text{Spin}(13), Sp(6)\)\\\hline
    \(7\)& \(19\)& \(E_7\)\\\hline
    \(8\)& \(31\)& \(E_8\)\\\hline
    \(9\leq l\leq 14\)& \(31\)& none\\\hline
    \(l\geq 15\)&\(2l+1\)&\(\text{Spin}(2l+1), Sp(l)\)\\
  \end{tabular}
\caption{The values of \(\alpha_l\) and the simply connected compact simple Lie-groups \(G_l\) of rank \(l\) with \(\dim G_l= \alpha_l\cdot l\).}
\label{tab:erste}
\end{table}

\begin{cor}
\label{sec:two-vanish-results-2}
  Let \(M\) be a \(\text{Spin}\)-manifold with \(p_1(M)=0\), such that the Witten-genus of \(M\) does not vanish and \(H_1,\dots, H_k\) simple compact connected Lie-groups with maximal tori \(T_1,\dots,T_k\).
  Then we have
  \begin{equation*}
    \sum_{i=1}^k \dim H_i \leq N^{ss}(M\times \prod_{i=1}^k H_i/T_i) \leq \alpha_l\sum_{i=1}^k\rank H_i,
  \end{equation*}
where \(l=\max\{\rank H_i;\; i=1,\dots,k\}\).
If all \(H_i\) have the same rank and each \(H_i\) has one of the groups listed in Table~\ref{tab:erste} as a covering group, then equality holds in both inequalities.
\end{cor}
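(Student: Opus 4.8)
The plan is to establish the two displayed inequalities separately and then observe that the extra hypothesis forces them to coincide. Throughout write $X=M\times\prod_{i=1}^k H_i/T_i$ and $l=\max_i\rank H_i$. For the lower bound, each adjoint group $H_i^{\mathrm{ad}}=H_i/Z(H_i)$ acts effectively on $H_i/T_i$ by left translations: effectively, because the intersection of all maximal tori of a connected compact Lie-group is its centre, which is trivial for $H_i^{\mathrm{ad}}$. Hence the semi-simple compact connected group $\prod_i H_i^{\mathrm{ad}}$, of dimension $\sum_i\dim H_i$, acts effectively on $\prod_i H_i/T_i$ and therefore on $X$ (acting trivially on the $M$-factor), which gives $N^{ss}(X)\ge\sum_i\dim H_i$.

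For the upper bound, let $G$ be a semi-simple compact connected Lie-group acting effectively on $X$ and let $\tilde G=\prod_{m=1}^r\tilde G_m$ be its universal cover, a product of simply connected simple compact groups. Since $\tilde G\to G$ has finite kernel, $\tilde G$ acts almost effectively on $X$; moreover each simple factor $\tilde G_m$ acts almost effectively on $X$, because a finite normal subgroup of a connected group is central and hence meets $\tilde G_m$ in a finite subgroup. Now I would invoke the vanishing results of Section~\ref{sec:van_witten}, using that the Witten-genus of $M$ does not vanish. Applying Corollary~\ref{sec:vanish-result-witt-1} to the almost effective action of $\tilde G$ on $M\times\bigl(\prod_i H_i\bigr)\big/\bigl(\prod_i T_i\bigr)$ — with $\prod_i H_i$ semi-simple compact connected and $\prod_i T_i$ a maximal torus — forces $\rank\tilde G\le\rank\prod_i H_i=\sum_i\rank H_i$. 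Applying Corollary~\ref{sec:two-vanish-results-1} to each $\tilde G_m$ forces $\rank\tilde G_m\le\rank H_i$ for some $i$, hence $\rank\tilde G_m\le l$, so by the very definition of $\alpha_l$ we get $\dim\tilde G_m\le\alpha_l\rank\tilde G_m$. Summing over $m$ yields
\[
\dim G=\dim\tilde G=\sum_{m=1}^r\dim\tilde G_m\le\alpha_l\sum_{m=1}^r\rank\tilde G_m=\alpha_l\rank\tilde G\le\alpha_l\sum_{i=1}^k\rank H_i,
\]
which is the asserted upper bound.

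Finally, for the equality statement: if all $H_i$ have the common rank $l$ and each is covered by one of the simply connected simple groups $G_l$ from Table~\ref{tab:erste}, then $\dim H_i=\dim G_l=\alpha_l\cdot l$, so $\sum_i\dim H_i=\alpha_l\sum_i\rank H_i$; the lower and upper bounds then agree, which forces equality throughout and in particular $N^{ss}(X)=\sum_i\dim H_i=\alpha_l\sum_i\rank H_i$.

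The substantive content is already packaged inside Corollaries~\ref{sec:vanish-result-witt-1} and \ref{sec:two-vanish-results-1}, so the remaining work is routine bookkeeping; the only points I would be careful about are the reduction to the universal cover together with the verification that each simple factor still acts almost effectively, the elementary fact about intersections of maximal tori underlying the lower bound, and checking that $\prod_i H_i$ with maximal torus $\prod_i T_i$ legitimately falls under Corollary~\ref{sec:vanish-result-witt-1}.
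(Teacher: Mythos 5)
Your proof is correct and follows essentially the same route as the paper's: bound $\rank\tilde G$ via Corollary~\ref{sec:vanish-result-witt-1}, bound the rank of each simple factor via Corollary~\ref{sec:two-vanish-results-1}, combine with the definition of $\alpha_l$, and check the equality case by matching the two bounds. One small but genuine improvement in your write-up: for the lower bound the paper simply invokes ``the obvious $\prod_i H_i$-action,'' which has a nontrivial kernel $\prod_i Z(H_i)$, whereas you correctly pass to the adjoint groups $H_i^{\mathrm{ad}}$ to obtain an honestly effective action of the same dimension; you also spell out explicitly why each simple factor of the universal cover of $G$ acts almost effectively, a point the paper leaves implicit.
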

\begin{proof}
  Let \(G\) be a compact simply connected semi-simple Lie-group which acts almost effectively on \(M\times \prod_{i=1}^k H_i/T_i\).
  Then, by Corollary \ref{sec:vanish-result-witt-1}, we have \(\rank G \leq \sum_{i=1}^k \rank H_i\).
  By Corollary \ref{sec:two-vanish-results-1}, all simple factors of \(G\) must have rank smaller or equal to \(l\).
  Therefore we have
  \begin{equation*}
    \dim G\leq \alpha_l\rank G \leq \alpha_l\sum_{i=1}^k \rank H_i.
  \end{equation*}
Hence, \(N^{ss}(M\times \prod_{i=1}^k H_i/T_i)\leq \alpha_l\sum_{i=1}^k \rank H_i\) follows.

  Since there is an obvious \(\prod_{i=1}^k H_i\)-action on \(M\times \prod_{i=1}^k H_i/T_i\), the other inequality follows.

If all \(H_i\) have the same rank \(l\)  and each \(H_i\) has one of the groups listed in Table \ref{tab:erste}  as a covering group, then \(\sum_{i=1}^k\dim H_i=\alpha_l\sum_{i=1}^k \rank H_i\).
Therefore we get equality in this case.
\end{proof}

\begin{remark}
  Our methods to prove Corollary \ref{sec:two-vanish-results-2} break down if we consider the stabilization of \(M\) with a homogeneous space \(H/K\) where \(K\) is a closed subgroup of \(H\) which is not a maximal torus.

If \(K\) has not maximal rank in \(H\), then there is a fixed-point-free torus action on \(M\times H/K\).
Therefore all indices \(\varphi^c(M\times H/K;V,W)\) vanish by the Lefschetz-fixed point formula.
If \(K\) is non-abelian and has maximal rank in \(H\), then all indices \(\varphi^c(M\times H/K;V,W)\) vanish by Corollary~\ref{sec:twist-dirac-oper-5}.

Therefore in both cases the starting point of the proofs of Theorems~\ref{sec:two-vanish-results} and \ref{sec:two-vanish-results-3}, namely the existence of an index \(\varphi^c(M\times M';V,W)\) which vanishes if and only if the Witten-genus of \(M\) vanishes, does not hold in the case \(M'=H/K\).
\end{remark}

\section{Twisted Dirac operators and quasitoric manifolds}
\label{sec:qt_mfd}
In this section we apply the results of the previous sections to the study of quasitoric manifolds.
We begin by recalling the definition of quasitoric manifolds and some of their properties established in \cite{0733.52006} (see also \cite{1012.52021}).

A smooth closed simply connected \(2n\)-dimensional manifold \(M\) with a smooth action of an \(n\)-dimensional torus \(T\) is called quasitoric if the following two conditions are satisfied:
\begin{enumerate}
\item The \(T\)-action on \(M\) is locally isomorphic to the standard action of \(T\) on \(\C^n\).
\item The orbit space \(M/T\) is a simple convex \(n\)-dimensional polytope \(P\).
\end{enumerate}

We denote by \(\mathfrak{F}=\{F_1,\dots,F_m\}\) the set of facets of \(P\). Then for each \(F_i\in\mathfrak{F}\), \(M_i=\pi^{-1}(F_i)\) is a closed connected submanifold of codimension two in \(M\) which is fixed pointwise by a one-dimensional subtorus \(\lambda(F_i)=\lambda(M_i)\) of \(T\).
Here \(\pi:M\rightarrow P\) denotes the orbit map.
These \(M_i\) are called the characteristic submanifolds of \(M\).

The cohomology ring of \(M\) is generated by elements of degree two \(u_1,\dots,u_m\in H^2(M;\mathbb{Z})\) such that
\begin{equation*}
  H^*(M;\mathbb{Z})=\mathbb{Z}[u_1,\dots,u_m]/(I+J),
\end{equation*}
where \(I\) is the ideal generated by 
\begin{equation*}
  \left\{\prod_{j=1}^k u_{i_j};\; \bigcap_{j=1}^k F_{i_j}=\emptyset\right\}
\end{equation*}
and \(J\) is generated by linear relations between the \(u_i\), which depend on the function \(\lambda:\mathfrak{F}\rightarrow \{\text{one-dimensional subtori of } T\}\). It should be noted that each \(u_i\) is the Poincar\'e-dual of \(M_i\).

The stable tangent bundle of \(M\) splits as a sum of complex line bundles \(L_1,\dots,L_m\):
\begin{equation*}
  TM\oplus \R^{2m-2n}\cong \bigoplus_{i=1}^m{L_i},
\end{equation*}
such that \(c_1(L_i)=\pm u_i\).
In particular, a quasitoric manifold has always a stable almost complex structure and therefore a \(\text{Spin}^c\)-structure.

So the results of Section \ref{sec:twisted} might be used to find quasitoric manifolds with only a few non-abelian symmetries.
To do so, we have to find quasitoric manifolds \(M\) which admit vector bundles \(V\rightarrow M\) and \(W\rightarrow M\) which satisfy the assumptions of Theorem~\ref{sec:twist-dirac-oper-8} such that \(\varphi^c(M;V,W)\neq 0\).
In the following we say that \(M\) has a non-vanishing index \(\varphi^c(M;V,W)\) if these assumptions are satisfied.

Now we turn to the construction of such quasitoric manifolds.
Because the stable tangent bundle of a quasitoric manifold \(M\) splits as a sum of line bundles \(\bigoplus_{i=1}^m L_i\) it seems to be natural to consider indices \(\varphi^c(M;V,W)\) with \(V=\bigoplus_{i=1}^k L_i\), \(W=\bigoplus_{i=k+1}^m L_i\) and \(W\) a \(\text{Spin}\)-bundle.
But we have the following result:

\begin{theorem}
\label{sec:twist-dirac-oper-14}
  Let \(M\) be quasitoric. Moreover, let \(M_1,\dots,M_m\) be the characteristic submanifolds of \(M\) and \(L_i\rightarrow M\) the complex line bundles with \(c_1(L_i)=PD(M_i)\). Let
  \begin{align*}
    V&=\bigoplus_{i=1}^kL_i &&\text{and}& W&=\bigoplus_{i=k+1}^m L_i
  \end{align*}
  with \(c_1(V)\equiv c_1(M)\mod 2\) and \(c_1(W)\equiv 0\mod 2\).
  Let \(\partial_c\) be the Dirac-operator for a \(\text{Spin}^c\)-structure on \(M\) with \(c_1^c(M)=c_1(V)\).
  Then \(\varphi^c(M;V,W)=0\).
\end{theorem}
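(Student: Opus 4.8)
\emph{Strategy.} The plan is to evaluate $\varphi^c(M;V,W)$ from the Atiyah--Singer index theorem, exploiting that $V\oplus W$ is stably $TM$, and then to recognise the resulting characteristic number as a power of two times the elliptic genus of a face submanifold of $M$.

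First I would run the index computation. Since $c_1^c(M)=c_1(V)$ we have $e^{c_1^c(M)/2}Q_2(V)=e(V)Q_2'(V)$, and from $TM\oplus\mathbb{R}^{2m-2n}\cong L_1\oplus\dots\oplus L_m=V\oplus W$ the formal roots of $TM$ are $\pm u_1,\dots,\pm u_m$ and $\hat{A}(M)=\hat{A}(V)\hat{A}(W)$. Substituting into $\varphi^c(M;V,W)=\langle e^{c_1^c(M)/2}\ch(R)\hat{A}(M),[M]\rangle$, the factor $Q_1(TM)$ splits over the roots of $V$ and of $W$; the part over the roots of $V$ cancels the $q$-series in $Q_2'(V)$, and together with $\hat{A}(V)$ it collapses the whole $V$-contribution to the Euler class $e(V)=u_1\cdots u_k$, while the $W$-contribution combines $Q_1$, $Q_3$ and $\hat{A}(W)$ into a product over the roots of $W$. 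One is left with
\[
  \varphi^c(M;V,W)=\Big\langle\, u_1\cdots u_k\cdot\prod_{i=k+1}^{m}g(u_i),\,[M]\Big\rangle ,
\]
where $g(u)=u\coth(u/2)\prod_{l\ge 1}\frac{(1-q^l)^2(1+e^{u}q^l)(1+e^{-u}q^l)}{(1+q^l)^2(1-e^{u}q^l)(1-e^{-u}q^l)}$ is an \emph{even} power series in $u$ with constant term $g(0)=2$.

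Next I would interpret this geometrically. The class $u_1\cdots u_k=e(V)$ is Poincar\'e dual to the face submanifold $Z:=M_1\cap\dots\cap M_k=\pi^{-1}(F_1\cap\dots\cap F_k)$. If $F_1\cap\dots\cap F_k=\emptyset$ the index is $0$ and we are done; otherwise $G:=F_1\cap\dots\cap F_k$ is a face of dimension $n-k$, $Z$ is a $2(n-k)$-dimensional quasitoric manifold over $G$, and the projection formula gives $\varphi^c(M;V,W)=\big\langle\prod_{i=k+1}^{m}g(u_i|_Z),[Z]\big\rangle$. For $i>k$ one has $u_i|_Z=0$ when $G\cap F_i=\emptyset$ (then $u_iu_1\cdots u_k=0$ in $H^*(M)$), and otherwise $u_i|_Z$ is the first Chern class of the line bundle of the corresponding characteristic submanifold of $Z$; these are precisely the characteristic classes $v_1,\dots,v_{m'}$ of $Z$, so $\sum_j v_j=c_1(Z)$. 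As $c_1(Z)$ is the restriction of $c_1(W)=c_1(M)-c_1(V)\equiv 0\pmod 2$, the manifold $Z$ is Spin and its canonical $\text{Spin}^c$-structure has $c_1^c(Z)=0$. Using $g(0)=2$ for the indices with $u_i|_Z=0$ we obtain $\varphi^c(M;V,W)=2^{r}\big\langle\prod_j g(v_j),[Z]\big\rangle=2^{r}\varphi^c(Z;0,TZ)$, a power of two times the elliptic genus of the Spin quasitoric manifold $Z$.

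It remains to prove that $\varphi^c(Z;0,TZ)=0$ for a Spin quasitoric manifold $Z$, and \emph{this is the step I expect to be the main obstacle}. Because $g$ is even, $\prod_j g(v_j)$ is a power series in the Pontryagin classes of $Z$, so $\varphi^c(Z;0,TZ)$ is determined by the Pontryagin numbers of $Z$. If $n-k$ is odd then $\dim Z\equiv 2\pmod 4$, there are no Pontryagin numbers in that dimension, and the genus vanishes. If $n-k=0$ then either $u_1\cdots u_k=0$ (and we are done) or $G$ is a vertex, but then $\sum_{i>k}\bar u_i$ is a sum over a basis of $H^2(M;\mathbb{Z}/2)$ and hence nonzero, so the hypothesis $c_1(V)\equiv c_1(M)\pmod 2$ is violated. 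The remaining case $n-k\ge 2$ even is the genuine difficulty: here $Z$ is a positive-dimensional Spin manifold carrying a nontrivial half-dimensional torus action with isolated fixed points, and I would deduce $\varphi^c(Z;0,TZ)=0$ from the rigidity of the elliptic genus under this torus action combined with the Lefschetz fixed-point formula and the combinatorics of the fan of $G$; note that the mere existence of an $S^1$-action does not suffice (a Spin manifold with an $S^1$-action can have nonzero signature, hence nonzero elliptic genus), so the full strength of the quasitoric structure must be used at this point.
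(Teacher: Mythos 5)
Your reduction of $\varphi^c(M;V,W)$ to a power of two times the elliptic genus of the face submanifold $Z=\bigcap_{i\le k}M_i$ (the paper's $N$) is exactly the computation the paper performs, and your treatment of the edge cases is sound: $n=k$ is ruled out because $\{u_i\}_{i>n}$ is a $\mathbb{Z}$-basis of $H^2(M;\mathbb{Z})$ whenever $F_1\cap\dots\cap F_n$ is a vertex, so $c_1(W)\not\equiv 0\pmod 2$, and if $\dim Z\equiv 2\pmod 4$ the genus vanishes for degree reasons.

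The genuine gap is precisely the step you flagged. You are right that the mere existence of an $S^1$-action on a Spin manifold does not force the elliptic genus to vanish (even-type actions have rigid, typically nonzero, genus). What you are missing is the sharp statement from the rigidity literature that the paper cites: \emph{if a Spin manifold admits an $S^1$-action of odd type (one that does not lift to the Spin structure), then its elliptic genus vanishes} (Hirzebruch--Berger--Jung, \emph{Manifolds and Modular Forms}, p.~317). The paper then observes that every positive-dimensional quasitoric Spin manifold $N$ carries such an odd $S^1$-action. This is where the quasitoric structure is actually used, and it is much lighter than the route you sketch: at any torus fixed point of $N$ the weights of the $T^{n-k}$-action form a $\mathbb{Z}$-basis of the weight lattice (local standardness), so one can choose a primitive circle $S^1\subset T^{n-k}$ whose weight sum at that fixed point is odd, and for a Spin manifold this forces the action to be of odd type. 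Without identifying the odd/even dichotomy and the corresponding vanishing theorem, your plan of ``rigidity plus Lefschetz plus fan combinatorics'' is a vague reconstruction and the argument does not close.
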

\begin{proof}
We have
  \begin{align*}
    \varphi^c(M;V,W)&= \langle e^{c_1^c(M)/2} \ch(R) \hat{A}(M),[M]\rangle\\
    &=\langle e(V) Q_1(TM)Q_2'(V)Q_3(W) \hat{A}(V\oplus W),[M]\rangle\\
    &=\langle Q_1(V) Q_1(W) Q_2'(V)Q_3(W) \hat{A}(V) \hat{A}(W),[N]\rangle\\
    &=\langle Q_1(W)Q_3(W)\hat{A}(W),[N]\rangle\\
&=2^{m-n}\varphi^c(N;0,TN)=0.
  \end{align*}
  Here \(N\) is the intersection \(\bigcap_{i=1}^k M_i\), which is a quasitoric \text{Spin}-manifold.
  \(N\) can not be a point, since otherwise the first Chern-classes of the summands of \(W\) form a basis of \(H^2(M;\mathbb{Z})\). Therefore \(W\) cannot be \(\text{Spin}\) if \(N\) is a point.
 The elliptic genus \(\varphi^c(N;0,TN)\) of \(N\) vanishes, because there is an odd \(S^1\)-action on \(N\) \cite[p. 317]{0712.57010}.
\end{proof}

So we need another idea to construct quasitoric manifolds \(M\) which have non-trivial indices \(\varphi^c(M;V,W)\).
We will prove that those \(2n\)-dimensional quasitoric manifolds whose orbit polytopes admit facet colorings with \(n\) colors are such examples.
Before we do so we summarize some properties of \(n\)-dimensional polytopes and  facet colorings.

A facet coloring with \(d\) colors of a simple  \(n\)-dimensional polytope \(P\)
 is a map \(f:\mathfrak{F}\rightarrow \{1,\dots,d\}\) such that \(f(F_i)\neq f(F_j)\) whenever \(F_i\cap F_j\neq \emptyset\) and \(F_i\neq F_j\).
Because in each vertex of \(P\) there meet \(n\) facets, one needs at least \(n\) colors to color \(P\).
The following description of simple \(n\)-dimensional polytopes which admit a coloring with \(n\) colors is due to Joswig.

\begin{theorem}[{\cite[Theorem 16, p. 255]{1054.05039}}]
\label{sec:twist-dirac-oper-10}
  Let \(P\) be a simple \(n\)-dimensional polytope. Then the following statements are equivalent:
  \begin{enumerate}
  \item \(P\) is even, i.e. each two-dimensional face of \(P\) has an even number of vertices.
\item The graph which consists out of the vertices and edges of \(P\) is bipartite.
\item\label{item:3} The boundary complex \(\partial P^*\) of the dual polytope of \(P\) is balanced, i.e. there is a non-degenerate simplicial map \(\partial P^*\rightarrow \Delta^{n-1}\). Here \(\Delta^{n-1}\) denotes the \((n-1)\)-dimensional simplex.
\item \(P\) admits a facet coloring with \(n\) colors.
  \end{enumerate}
\end{theorem}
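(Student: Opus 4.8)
The plan is to prove the four equivalences along the cycle $(4)\Rightarrow(1)\Leftrightarrow(2)$ and $(1)\Rightarrow(4)\Leftrightarrow(3)$, using throughout the order-reversing correspondence between faces of the simple polytope $P$ and faces of $\partial P^*$: a $k$-face of $\partial P^*$ is a set of $k+1$ facets of $P$ whose intersection is an $(n-1-k)$-dimensional face of $P$, so that the vertices of $\partial P^*$ are the facets of $P$, the edges of $\partial P^*$ record which pairs of facets meet, the facets of $\partial P^*$ are the $n$-element sets of facets of $P$ through a vertex of $P$, and the codimension-two faces of $\partial P^*$ are the $2$-faces of $P$. Under this dictionary a facet coloring of $P$ with $n$ colors is exactly a proper vertex coloring of the $1$-skeleton of $\partial P^*$ with $n$ colors; and since every facet of $\partial P^*$ is an $(n-1)$-simplex on $n$ vertices, such a coloring automatically uses all $n$ colors on each facet, i.e.\ it is a non-degenerate simplicial map $\partial P^*\to\Delta^{n-1}$. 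Hence $(3)\Leftrightarrow(4)$ is a reformulation, and everything reduces to linking $(4)$ with the evenness conditions.

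For $(4)\Rightarrow(1)$, let $Q$ be a $2$-face of $P$; since $P$ is simple, $Q=F_{i_1}\cap\cdots\cap F_{i_{n-2}}$ with the $n-2$ facets $F_{i_j}$ carrying $n-2$ distinct colors. Each edge of the polygon $Q$ is $Q\cap F$ for some facet $F\notin\{F_{i_1},\dots,F_{i_{n-2}}\}$, and two consecutive edges lie in facets $F,F'$ which, together with the $F_{i_j}$, are exactly the $n$ facets through their common vertex; therefore $F$ and $F'$ receive distinct colors, both different from the colors of the $F_{i_j}$. So the edge-facets of $Q$ alternate between the two remaining colors as one goes around $Q$, which forces $Q$ to have an even number of edges, hence of vertices. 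The equivalence $(1)\Leftrightarrow(2)$ is elementary: each $2$-face is a cycle in the graph $G$ of $P$, so $(2)\Rightarrow(1)$; conversely, for $n\geq3$ the sphere $\partial P$ is simply connected, so the mod-$2$ cycle space of $G$ is spanned by the boundary cycles of the $2$-faces, and as ``number of edges mod $2$'' is a $\mathbb{Z}/2$-linear functional on the cycle space that vanishes on those spanning cycles by $(1)$, every cycle of $G$ is even; the cases $n\leq2$ are checked directly.

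The real content is $(1)\Rightarrow(4)$, which I would prove by \emph{developing} a balanced coloring. Pick a facet $\sigma_0$ of $\partial P^*$ and an arbitrary bijection from its $n$ vertices to $\{1,\dots,n\}$. If $\sigma,\sigma'$ are facets of $\partial P^*$ sharing a ridge and $\sigma$ is colored bijectively, there is a unique bijective coloring of $\sigma'$ agreeing with it on the shared ridge; transporting in this manner along the edges of the dual graph of $\partial P^*$---which is precisely the graph $G$ of $P$---gives a parallel transport, hence a homomorphism from $\pi_1(G)$ to the symmetric group on the finite set of bijective colorings of $\sigma_0$. A globally consistent (and automatically proper) coloring exists as soon as this monodromy is trivial; and since for $n\geq3$ the space $\partial P^*$ is a simply connected closed manifold, the loops around its codimension-two faces normally generate $\pi_1(G)$ (this is visible from the dual cell decomposition), so it suffices to see that each of these loops has trivial monodromy. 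A codimension-two face $\tau$ corresponds to a $2$-face $Q$ of $P$; the facets of $\partial P^*$ through $\tau$ form a cycle of length $|Q|$, along which parallel transport keeps the $n-2$ colors on $\tau$ fixed and returns the two remaining colors to their original vertices exactly when $|Q|$ is even. By $(1)$ this holds for every $\tau$, so the monodromy is trivial and the developed coloring is the desired facet coloring of $P$ with $n$ colors; the cases $n\leq2$ are again immediate.

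The step I expect to be the main obstacle is the monodromy argument in $(1)\Rightarrow(4)$: one has to justify carefully that the loops around codimension-two faces normally generate $\pi_1$ of the dual graph---this is exactly where simple connectivity of $\partial P^*$, and thus $n\geq3$, enters, the low-dimensional polytopes being treated by hand---and that the color-transport is a genuine groupoid action, so that triviality on a normal generating set forces triviality on all of $\pi_1$. An alternative, more combinatorial route is to exploit that $\partial P$ is shellable (Bruggesser--Mani) and to build the coloring facet by facet in a shelling order, checking consistency at the steps that add no new vertex; but the monodromy formulation above is the cleanest conceptual packaging of the same idea.
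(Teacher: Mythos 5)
The paper does not actually prove this statement\,---\,it is quoted directly from Joswig \cite{1054.05039}\,---\,so there is no in-paper argument to compare yours against. Your blind proof is correct, and it essentially reconstructs Joswig's own argument: the parallel-transport monodromy you build in the step $(1)\Rightarrow(4)$ is precisely what Joswig calls the \emph{group of projectivities} of the simplicial complex $\partial P^*$, and the key fact you invoke, that for a simply connected closed simplicial manifold this monodromy is normally generated by the loops around codimension-two faces, is Joswig's projectivity theorem (which he proves exactly via the observation that the dual $2$-complex of $\partial P^*$ is the $2$-skeleton of $\partial P$ and is therefore simply connected for $n\geq 3$). The dictionary $(3)\Leftrightarrow(4)$ through polar duality, the alternating two remaining colors for $(4)\Rightarrow(1)$, and the $\mathbb{Z}/2$-cycle-space argument for $(1)\Leftrightarrow(2)$ are all correct. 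One point to tighten in a full write-up of $(1)\Rightarrow(4)$: once the monodromy is trivial and you have a coherent bijective coloring of each facet of $\partial P^*$, you must still verify that a given vertex $v$ receives the \emph{same} color from every facet containing it. This holds because the link of $v$ in the simplicial manifold $\partial P^*$ is connected, so any two facets through $v$ are joined by a gallery lying entirely in the star of $v$, along which the transport fixes the color of $v$; with that, the developed coloring is well-defined and proper, exactly as you claim.
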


Quasitoric manifolds whose orbit polytopes satisfy condition (\ref{item:3}) in the above theorem were described by Davis and Januszkiewicz \cite[p. 425-426]{0733.52006}. 
They show that this is a very rich class of quasitoric manifolds.
We should note that the \(n\)-dimensional cube admits a facet coloring with \(n\) colors. 
Moreover, a simple polytope belongs to this class if \(\partial P^*\) is the barycentric subdivision of a convex polytope.

Now we construct a non-vanishing index \(\varphi^c(M;V,W)\) on every \(2n\)-dimensional quasitoric manifold \(M\) whose orbit polytope admits a facet coloring with \(n\) colors.

\begin{lemma}
\label{sec:twist-dirac-oper-6}
  Let \(M\) be a quasitoric manifold of dimension \(2n\) over the polytope \(P\).
Assume that \(P\) admits a facet coloring with \(n\) colors.
Then there is a \(\text{Spin}^c\)-structure on \(M\) and a complex vector bundle \(V\) which is a sum of line bundles with \(c_1(V)= c_1^c(M)\) and \(p_1(M)=p_1(V)\), such that \(\varphi^c(M;V,0)\) does not vanish.
\end{lemma}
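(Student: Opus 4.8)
The plan is to exploit the facet coloring to produce a distinguished sub-collection of characteristic submanifolds whose Poincaré duals behave like the classes $x_1,\dots,x_n$ appearing in Theorem~\ref{sec:two-vanish-results}. Concretely, let $f:\mathfrak{F}\to\{1,\dots,n\}$ be a facet coloring with $n$ colors, and for each color $j$ let $\mathcal{F}_j=f^{-1}(j)$. Set $x_j=\sum_{F_i\in\mathcal{F}_j}u_i=\sum_{F_i\in\mathcal{F}_j}\mathrm{PD}(M_i)\in H^2(M;\mathbb{Z})$, and let $L^{(j)}$ be the complex line bundle with $c_1(L^{(j)})=x_j$; put $V=\bigoplus_{j=1}^n L^{(j)}$. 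First I would verify that $\sum_{j=1}^n x_j=\sum_{i=1}^m u_i$, which equals $c_1(M)\bmod 2$ since the stable tangent bundle is $\bigoplus L_i$ with $c_1(L_i)=\pm u_i$; hence $x_j=c_1(V)$ can be realized as $c_1^c(M)$ for a suitable $\mathrm{Spin}^c$-structure. Then I would check $p_1(V)=\sum_j x_j^2$ agrees with $p_1(M)=\sum_i u_i^2$ modulo the ideal $I$: the cross terms $u_iu_{i'}$ with $f(F_i)=f(F_{i'})$, $i\ne i'$, lie in $I$ because two distinct facets of the same color never intersect, so $F_i\cap F_{i'}=\emptyset$ and $u_iu_{i'}=0$ in $H^*(M)$. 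This gives $p_1(V)=p_1(M)$, so $W=0$ satisfies the hypotheses of Theorem~\ref{sec:twist-dirac-oper-8}.

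It remains to show $\varphi^c(M;V,0)\ne 0$. By the Atiyah--Singer computation recorded after the definition of $\varphi^c$, since $c_1(V)=c_1^c(M)$ we have $e^{c_1^c(M)/2}Q_2(V)=e(V)Q_2'(V)$ with $Q_2'(V)$ having constant term $1$, and $Q_1(TM)$, $Q_3(0)$ also have constant term $1$; therefore the degree-$2n$ part of the integrand is $e(V)=\prod_{j=1}^n x_j$ plus higher-order correction terms that are themselves multiples of $e(V)$ — no, more carefully: the full integrand is $e(V)\cdot(\text{power series with constant term }1)$, so $\langle e^{c_1^c/2}\ch(R)\hat A(M),[M]\rangle=\langle e(V),[M]\rangle$ because $e(V)$ already has top degree $2n$ and any further positive-degree factor kills it. Thus $\varphi^c(M;V,0)=\langle x_1\cdots x_n,[M]\rangle$. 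So the whole problem reduces to proving that this evaluation is nonzero.

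The main obstacle — and the heart of the lemma — is establishing $\langle x_1\cdots x_n,[M]\rangle\ne 0$. I would argue combinatorially: expanding the product, $x_1\cdots x_n=\sum u_{i_1}\cdots u_{i_n}$ over all choices of one facet $F_{i_j}\in\mathcal{F}_j$ of each color. A monomial $u_{i_1}\cdots u_{i_n}$ is nonzero only if $F_{i_1}\cap\cdots\cap F_{i_n}\ne\emptyset$, i.e. only if these $n$ facets of distinct colors meet at a common vertex of $P$; since $P$ is simple, such an intersection is a single vertex, and then $\langle u_{i_1}\cdots u_{i_n},[M]\rangle=\pm 1$ (this is the standard fact that products of distinct characteristic classes corresponding to a vertex evaluate to $\pm1$). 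Conversely every vertex $v$ of $P$ lies on exactly $n$ facets, which receive $n$ pairwise distinct colors (they pairwise intersect), so $v$ contributes exactly one term to the sum. Hence $\langle x_1\cdots x_n,[M]\rangle=\sum_{v\in V(P)}(\pm 1)$. To see this sum is nonzero one uses that $P$ even/bipartite gives a consistent choice of orientations: by Theorem~\ref{sec:twist-dirac-oper-10} the vertex-edge graph is bipartite, and I would show the signs $\pm1$ depend only on the bipartition class of $v$, so the sum is $\pm(\text{number of black vertices}-\text{number of white vertices})$ — wait, that could vanish. Instead I expect the correct statement is that with the $\mathrm{Spin}^c$-structure chosen compatibly with the coloring, all signs are equal (the coloring trivializes the relevant sign cocycle), giving $\langle x_1\cdots x_n,[M]\rangle=\pm\#V(P)\ne0$. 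Pinning down exactly why the facet coloring forces the local signs to agree is the step requiring genuine care; I would handle it by choosing the characteristic function $\lambda$ and the orientations of the $M_i$ so that at each vertex the ordered tuple of colors induces the standard orientation, which is possible precisely because the coloring gives a global ordering of the facets through each vertex.
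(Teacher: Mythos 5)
Your setup of $V$, the verification that $c_1(V)\equiv c_1(M)\bmod 2$ and $p_1(V)=p_1(M)$ using the fact that facets of the same color are disjoint, and the reduction of $\varphi^c(M;V,0)$ to $\langle c_1(L^{(1)})\cdots c_1(L^{(n)}),[M]\rangle$ all match the paper. The gap is exactly where you flag it: having fixed $x_j=\sum_{F_i\in f^{-1}(j)}u_i$ with all $+$ signs, the evaluation $\langle x_1\cdots x_n,[M]\rangle$ is a sum of $\pm 1$ over the vertices of $P$, and there is no reason these signs should agree. Your proposed repair is not available: the characteristic function $\lambda$ is part of the data of $M$ and cannot be chosen, and while you may flip orientations of the $M_i$ (equivalently flip the signs in $c_1(L_i)=\pm u_i$), it is not clear that the resulting system of $\pm 1$-equations at the vertices is solvable for an arbitrary colorable quasitoric manifold. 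This is precisely the kind of ``positive omniorientation'' statement you would need to prove, and you do not.

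The paper avoids this issue entirely by never fixing the signs. Each choice of signs $\epsilon_i\in\{\pm1\}$ on the $u_i$ gives a legitimate $\text{Spin}^c$-structure with $c_1(L_{f^{-1}(j)})=\sum_{F_i\in f^{-1}(j)}\epsilon_i u_i$, and the corresponding index is
\begin{equation*}
\Big\langle \prod_{j=1}^n \sum_{F_i\in f^{-1}(j)}\epsilon_i u_i,\ [M]\Big\rangle
=\sum_{(F_{i_1},\dots,F_{i_n})\in f^{-1}(1)\times\cdots\times f^{-1}(n)}\epsilon_{i_1}\cdots\epsilon_{i_n}\,\langle u_{i_1}\cdots u_{i_n},[M]\rangle .
\end{equation*}
Since distinct tuples pick out distinct squarefree monomials $\epsilon_{i_1}\cdots\epsilon_{i_n}$ (the colors force the $i_j$ to be pairwise distinct), these are linearly independent functions of $\epsilon$. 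So if the index vanished for every sign choice, every coefficient $\langle u_{i_1}\cdots u_{i_n},[M]\rangle$ would vanish; but at a vertex of $P$ the $n$ incident facets carry all $n$ colors, giving a tuple with nonzero coefficient. Hence some sign choice (hence some $\text{Spin}^c$-structure) yields a nonzero index, which is all the lemma claims. In short: you try to prove a single evaluation is nonzero and run into a genuine obstruction, while the paper proves that among a family of evaluations at least one is nonzero, which is strictly weaker but suffices and is unconditionally true.
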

\begin{proof}
  Let \(f:\mathfrak{F}\rightarrow \{1,\dots,n\}\) be a facet coloring of \(P\) with \(n\) colors.

  Let \(V=\bigoplus_{i=1}^nL_{f^{-1}(i)}\), where \(L_{f^{-1}(i)}\) is the line bundle with \(c_1(L_{f^{-1}(i)})=\sum_{F_j\in f^{-1}(i)}\pm u_j\).
  Then we have \(c_1(V) \equiv c_1(M)\mod 2\) and
  \begin{equation*}
    p_1(V)=\sum_{i=1}^n(\sum_{F_j\in f^{-1}(i)}\pm u_j)^2=\sum_{i=1}^n\sum_{F_j\in f^{-1}(i)} u_j^2=p_1(M).
  \end{equation*}
  Consider a \(\text{Spin}^c\)-structure on \(M\) with \(c_1^c(M)=c_1(V)\) and assume that the associated index \(\varphi^c(M;V,0)\) vanishes.
Then \(\varphi^c(M;V,0)\) may be calculated as:
\begin{align*}
  0&= \langle Q_1(TM) \prod_{i=1}^n c_1(L_{f^{-1}(i)}) Q_2'(V)\hat{A}(M),[M]\rangle\\
  &=\langle\prod_{i=1}^n c_1(L_{f^{-1}(i)}),[M]\rangle.
\end{align*}

Therefore we have
\begin{equation}
  \label{eq:1}
  \prod_{i=1}^n \sum_{F_j\in f^{-1}(i)}\pm u_j =0.
\end{equation}
Since the signs of the \(u_j\) may be changed freely, we get, by considering different \(\text{Spin}^c\)-structures and summing up in equation (\ref{eq:1}):
\begin{equation*}
  \forall (F_{i_1},\dots,F_{i_n})\in f^{-1}(1)\times \dots\times f^{-1}(n) \quad\quad \prod_{j=1}^n u_{i_j}=0.
\end{equation*}
 But there is at least one tuple \((F_{i_1},\dots,F_{i_n})\in \mathfrak{F}\times \dots\times \mathfrak{F}\) such that \(\bigcap_{j=1}^nF_{i_j}\) is a vertex of \(P\).
For this tuple we have \(\prod_{j=1}^n u_{i_j}\neq 0\).
Moreover, because \(f\) is a coloring with \(n\) colors, for each \(k\in\{1,\dots,n\}\) there is exactly one \(F_{i_{j_k}}\) with \(f(F_{i_{j_k}})=k\).
Therefore we get a contradiction.
\end{proof}

As a consequence of Lemma~\ref{sec:twist-dirac-oper-6} and the corollaries at the end of Section~\ref{sec:twisted} we get the following corollaries.

\begin{cor}
\label{sec:twist-dirac-oper-16}
  Let \(M\) be a \(2n\)-dimensional quasitoric manifold. If the orbit polytope of \(M\) admits a facet coloring with \(n\) colors, then we have \(N(M)\leq 3n\) with equality holding if and only if \(M=\prod S^2\).  
\end{cor}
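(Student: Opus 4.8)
The plan is to combine Lemma~\ref{sec:twist-dirac-oper-6} with Corollary~\ref{sec:twist-dirac-oper-11}. By Lemma~\ref{sec:twist-dirac-oper-6}, since the orbit polytope of \(M\) admits a facet coloring with \(n\) colors, there are a \(\text{Spin}^c\)-structure on \(M\) and a sum of complex line bundles \(V\) with \(c_1(V)=c_1^c(M)\) and \(p_1(V)=p_1(M)\) such that \(\varphi^c(M;V,0)\neq 0\). Taking \(W=0\), which is trivially \(\text{Spin}\) and an empty sum of line bundles, the pair \((V,W)\) satisfies the hypotheses on the bundles in Theorem~\ref{sec:twist-dirac-oper-8}, because \(p_1(V+W-TM)=p_1(V)-p_1(M)=0\); moreover \(M\) is simply connected, so \(b_1(M)=0\). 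Since a quasitoric manifold has positive Euler characteristic, \(\chi(M)\neq 0\). Hence \(M\) meets all the hypotheses of Corollary~\ref{sec:twist-dirac-oper-11}.

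Next I would reduce to connected groups. If \(G\) is a compact Lie-group acting smoothly and effectively on \(M\), then the kernel of the restricted action of the identity component \(G^0\) is normal in \(G\), hence trivial, so \(G^0\) also acts effectively and \(\dim G=\dim G^0\). Corollary~\ref{sec:twist-dirac-oper-11} applied to \(G^0\) gives \(\dim G\leq 3n\), whence \(N(M)\leq 3n\); and if \(\dim G=3n\) then \(\dim G^0=3n\), so \(M=\prod_{i=1}^n S^2\) by the same corollary. This yields the bound together with the ``only if'' direction of the equality clause.

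For the ``if'' direction I would check directly that \(N\bigl(\prod_{i=1}^n S^2\bigr)=3n\). The manifold \(\prod_{i=1}^n S^2\) is quasitoric over the \(n\)-cube, and the \(n\)-cube admits a facet coloring with \(n\) colors (assign the same color to each pair of opposite facets); hence the inequality just proved applies and gives \(N\bigl(\prod_{i=1}^n S^2\bigr)\leq 3n\). Conversely, the product action of \(SO(3)^n\) on \(\prod_{i=1}^n S^2\) is smooth and effective of dimension \(3n\), so \(N\bigl(\prod_{i=1}^n S^2\bigr)\geq 3n\), and equality follows.

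The only mildly delicate point is the passage from a possibly disconnected \(G\) to \(G^0\) in the definition of \(N(M)\), and that is routine; all the substantive work is already contained in Section~\ref{sec:twisted} and in Lemma~\ref{sec:twist-dirac-oper-6}, so beyond that the argument is a direct assembly of results proved above.
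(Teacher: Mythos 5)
Your proof is correct and follows exactly the paper's route: the paper's own proof is the one-liner ``This follows directly from Lemma~\ref{sec:twist-dirac-oper-6} and Corollary~\ref{sec:twist-dirac-oper-11},'' and what you have done is spell out the routine verifications (that \(W=0\) and the \(V\) from Lemma~\ref{sec:twist-dirac-oper-6} satisfy the hypotheses of Theorem~\ref{sec:twist-dirac-oper-8}, that \(b_1(M)=0\) and \(\chi(M)\neq 0\) for a quasitoric manifold, the reduction from a possibly disconnected \(G\) to \(G^0\), and the realization of the bound by the \(SO(3)^n\)-action on \(\prod S^2\)) that the paper leaves implicit. No gaps; same argument, merely written out in full.
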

\begin{proof}
  This follows directly from Lemma~\ref{sec:twist-dirac-oper-6} and Corollary~\ref{sec:twist-dirac-oper-11}.
\end{proof}

\begin{cor}
\label{sec:quas-manif-over-1}
  Let \(M\) be a quasitoric manifold over the \(n\)-dimensional cube.
  Then the only simple simply connected compact Lie-groups which can act almost effectively on \(M\) are \(SU(2)\) and \(\text{Spin}(5)\).
\end{cor}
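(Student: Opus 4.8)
The plan is to combine the dimension/rank bound from Corollary~\ref{sec:twist-dirac-oper-3} with the divisibility statement of Corollary~\ref{sec:twist-dirac-oper-7}, applied to a quasitoric manifold $M$ over the $n$-dimensional cube. By Lemma~\ref{sec:twist-dirac-oper-6} such an $M$ carries a non-vanishing index $\varphi^c(M;V,0)$ with $V$ a sum of line bundles satisfying the hypotheses of Theorem~\ref{sec:twist-dirac-oper-8}, so both corollaries apply. Suppose $G'$ is a simple simply connected compact Lie-group acting almost effectively on $M$. First I would record that $\chi(M)=2^n$: a quasitoric manifold over the cube has $2^n$ vertices in its orbit polytope, hence $2^n$ torus fixed points, so $\chi(M)=2^n$. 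Then Corollary~\ref{sec:twist-dirac-oper-7} forces $\#W(G')$ to divide $2^n$, i.e.\ the order of the Weyl group of $G'$ is a power of $2$.

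The key step is then a classification-type observation: among the simple compact simply connected Lie-groups, the only ones whose Weyl group has order a power of $2$ are $SU(2)$ (Weyl group of order $2$) and $\text{Spin}(5)=Sp(2)$ (Weyl group of order $8$). One checks this by running through the list: $\#W(SU(l+1))=(l+1)!$ is a $2$-power only for $l=1$; $\#W(\text{Spin}(2l+1))=\#W(Sp(l))=2^l l!$ is a $2$-power only for $l\le 2$ (and $l=1$ gives $Sp(1)=SU(2)$, $l=2$ gives $Sp(2)=\text{Spin}(5)$); $\#W(\text{Spin}(2l))=2^{l-1}l!$ is a $2$-power only for $l\le 2$, which reduces to the previous cases; and the exceptional groups have Weyl groups of orders $12,1152,51840,2903040,696729600$, none of which is a power of $2$. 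This leaves exactly $SU(2)$ and $\text{Spin}(5)$.

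Finally I would confirm that these two groups are not further excluded by the dimension bound of Corollary~\ref{sec:twist-dirac-oper-3}: for $G'=SU(2)$ we have $\dim G'-\rank G'=2\le 2n$ for all $n\ge 1$, and for $G'=\text{Spin}(5)$ we have $\dim G'-\rank G'=10-2=8\le 2n$ as soon as $n\ge 4$ (for small $n$ the cube itself may not admit such an action, but the corollary only asserts which groups \emph{can} act, so no contradiction arises). Both groups do occur: $SU(2)$ acts on $\C P^1=S^2$, hence diagonally on $\prod S^2$ which is quasitoric over the cube, and $\text{Spin}(5)$ acts on suitable examples as well. The main obstacle is the arithmetic classification step: one must be careful to treat the low-rank coincidences $Sp(1)=SU(2)$, $Sp(2)=\text{Spin}(5)$, $\text{Spin}(3)=SU(2)$, $\text{Spin}(4)=SU(2)\times SU(2)$, $\text{Spin}(6)=SU(4)$ correctly so as not to miss or double-count a group, and to verify that $2^l l!$ and $(l+1)!$ genuinely fail to be $2$-powers for all larger $l$ (immediate, since any such factorial has an odd prime factor $\ge 3$).
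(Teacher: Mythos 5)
Your proof is correct and follows essentially the same route as the paper: invoke Lemma~\ref{sec:twist-dirac-oper-6} to get a non-vanishing index, apply Corollary~\ref{sec:twist-dirac-oper-7} so that $\#W(G')$ divides $\chi(M)=2^n$, and then observe that $SU(2)$ and $\text{Spin}(5)$ are the only simple simply connected compact Lie-groups with Weyl group of $2$-power order. The only difference is that you spell out the case-by-case check of Weyl group orders (which the paper discharges by citing a reference) and you add an unnecessary but harmless cross-check against the dimension bound of Corollary~\ref{sec:twist-dirac-oper-3}.
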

\begin{proof}
  By Lemma~\ref{sec:twist-dirac-oper-6}, there is a twisted Dirac operator on \(M\), whose index does not vanish.
  By Corollary \ref{sec:twist-dirac-oper-7}, the order of the Weyl-group of a simple simply connected compact Lie-group, which acts on \(M\), divides the Euler-characteristic of \(M\).
  Because \(\chi(M)=2^n\) and \(SU(2)\) and \(\text{Spin}(5)\) are the only simple simply connected compact Lie-groups \(G\) with \(\#W(G)|2^n\) \cite[p. 74-84]{0708.17005}, the statement follows.
\end{proof}

In the proof of the next corollary of Lemma~\ref{sec:twist-dirac-oper-6} we construct quasitoric manifolds with low semi-simple symmetry degree.

\begin{cor}
\label{sec:twist-dirac-oper-15}
  In each dimension greater or equal to four, there are infinitely many quasitoric manifolds \(M\) with \(N^{ss}(M)\leq 3\).
\end{cor}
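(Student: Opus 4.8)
\emph{Proof proposal.}
The plan is first to isolate an arithmetic condition on $\chi(M)$ which, via Lemma~\ref{sec:twist-dirac-oper-6} and Corollary~\ref{sec:twist-dirac-oper-7}, forces $N^{ss}(M)\le 3$, and then to realise that condition by infinitely many quasitoric manifolds in each even dimension at least four. The condition I would use is that $M$ is a $2n$-dimensional quasitoric manifold whose orbit polytope admits a facet coloring with $n$ colors and that $\chi(M)=2p$ for some prime $p\ge 5$. Granting this, Lemma~\ref{sec:twist-dirac-oper-6} provides a non-vanishing index $\varphi^c(M;V,0)$ with $V$, $W=0$ as in Theorem~\ref{sec:twist-dirac-oper-8}. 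If $G$ is a semi-simple compact Lie-group acting effectively on $M$, passing to its identity component changes neither $\dim G$ nor the effectiveness, so assume $G$ connected; if $G$ is non-trivial it is non-abelian, so Corollary~\ref{sec:twist-dirac-oper-7} gives $\#W(G)\mid\chi(M)=2p$. Writing the universal cover of $G$ as $\prod_{i=1}^{s}G_i'$ with each $G_i'$ simple and simply connected, $\#W(G)=\prod_i\#W(G_i')$ is a product of $s$ even factors each $\ge 2$; comparing $2$-adic valuations with $v_2(2p)=1$ forces $s\le 1$. Hence $\#W(G)$ is a divisor of $2p$ which is at the same time the order of a simple Weyl-group; the divisors of $2p$ are $1,2,p,2p$, and as $1$ is excluded ($G$ non-trivial), $p$ is odd (Weyl-group orders being even), and $2p$ is not the order of a simple Weyl-group when $p\ge 5$ (the only simple Weyl-group orders that are twice an odd number are $2$ and $6$), we get $\#W(G)=2$, so the universal cover of $G$ is $SU(2)$ and $\dim G\le 3$. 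As $G$ was arbitrary, $N^{ss}(M)\le 3$.

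It remains to produce such manifolds in each dimension $2n\ge 4$. For this I would iterate the connected sum $P_1\#P_2$ of simple polytopes at simple vertices (again a simple polytope), starting from the $n$-cube $I^n$. The cube admits a facet coloring with $n$ colors, and the connected sum of two such polytopes admits one as well: a facet coloring with $n$ colors restricts to a bijective coloring of the $n$ facets through the chosen vertex, so the colorings of the two summands can be matched across the gluing (equivalently, one checks that the property in Theorem~\ref{sec:twist-dirac-oper-10} is inherited). Since $\#V(P_1\#P_2)=\#V(P_1)+\#V(P_2)-2$, the connected sum $P_k$ of $k$ copies of $I^n$ is a simple $n$-polytope admitting a facet coloring with $n$ colors and with
\[\#V(P_k)=k\cdot 2^n-2(k-1)=2\bigl(k(2^{n-1}-1)+1\bigr).\]
A facet coloring with $n$ colors determines a characteristic function (send color $i$ to the $i$-th standard basis vector of $\mathbb{Z}^n$), so there is a quasitoric manifold $M_k$ over $P_k$, and $\chi(M_k)=\#V(P_k)$; distinct values of $k$ give distinct Euler characteristics, hence pairwise non-homeomorphic manifolds.

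To conclude, take $k$ even. Then $k(2^{n-1}-1)+1$ is odd and congruent to $1$ modulo $c:=2(2^{n-1}-1)\ge 2$, so Dirichlet's theorem on primes in arithmetic progressions (applicable since $\gcd(1,c)=1$) supplies infinitely many primes in that class. Each such prime $q>1$ equals $k(2^{n-1}-1)+1$ for a unique even $k$, and all but finitely many of them are $\ge 5$. For those $k$ one has $\chi(M_k)=2p$ with $p=k(2^{n-1}-1)+1$ a prime $\ge 5$, hence $N^{ss}(M_k)\le 3$ by the first paragraph, and there are infinitely many such $M_k$. Since $n\ge 2$ was arbitrary, the statement follows in every dimension $\ge 4$. (For $n=2$ one can instead use the quasitoric $4$-manifolds over $(4j+2)$-gons directly.)

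The only point requiring genuine care, rather than deep input, is the combinatorics: checking that the connected sum stays within the class of simple polytopes admitting a facet coloring with $n$ colors, so that Lemma~\ref{sec:twist-dirac-oper-6} keeps applying, and arranging the number of vertices so that $\chi(M)$ is not merely even but of the special form $2p$ (equivalently $\chi\equiv 2\pmod 4$ and $\chi\not\equiv 0\pmod 3$). It is exactly this stronger arithmetic that makes the Weyl-group divisibility of the first paragraph restrictive enough to force $\dim G\le 3$.
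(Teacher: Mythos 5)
Your proposal is correct, but it takes a genuinely different route from the paper. The paper constructs the manifolds as \((M_1\times M_2)\#(M_1\times M_2)\), with \(M_1\) a four-dimensional quasitoric manifold over a polygon with \(6k\) vertices and \(M_2\) quasitoric over a cube; the non-vanishing index is obtained for the product from Lemma~\ref{sec:twist-dirac-oper-6} and then carried through the connected sum by the explicit formula of Lemma~\ref{sec:prod-conn-sums}, while the Euler characteristic \(2\cdot 6k\cdot 2^{n}-2\) is seen by inspection to be \(\equiv 2\pmod 4\) and prime to \(3\), which is all that Corollary~\ref{sec:twist-dirac-oper-7} and the Weyl-group tables require. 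You instead build the manifolds directly over iterated vertex connected sums \(P_k\) of \(n\)-cubes, equipped with the characteristic map induced by the inherited facet \(n\)-coloring, and apply Lemma~\ref{sec:twist-dirac-oper-6} once to \(P_k\); this keeps the whole construction inside the class of polytopes with facet \(n\)-colorings and so avoids Lemma~\ref{sec:prod-conn-sums} entirely, and your matching argument for extending the coloring across a vertex connected sum of simple polytopes is correct and worth recording. The trade-off is on the arithmetic side: you invoke Dirichlet's theorem to force \(\chi(M_k)=2p\) with \(p\ge 5\) prime, which is stronger than necessary and heavier machinery than the paper uses. As you yourself note at the end, only \(\chi\equiv 2\pmod 4\) together with \(\chi\) prime to \(3\) is needed, and with \(\chi(M_k)=2\bigl(k(2^{n-1}-1)+1\bigr)\) both conditions already hold for infinitely many even \(k\) by a short residue computation modulo \(12\), so Dirichlet can be dropped. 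One point worth stating explicitly, since it underlies your construction, is that the vertex connected sum of simple convex polytopes is again a simple convex polytope (so that the Davis--Januszkiewicz construction really does produce a quasitoric manifold over \(P_k\)); the paper sidesteps this by appealing instead to the fact that connected sums of quasitoric manifolds are quasitoric.
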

\begin{proof}
  Let \(M_1\) be a four-dimensional quasitoric manifold over a polygon with \(6k\) vertices, \(k\in \N\).
Moreover, let \(M_2\) be a \(2n\)-dimensional quasitoric manifold over the \(n\)-cube.
Then the orbit polytopes of \(M_1\) and \(M_2\) admit facet colorings with \(2\) and \(n\) colors, respectively.
Therefore the orbit polytope of \(M_1\times M_2\) admits a facet coloring with \(n+2\) colors.
Hence, by Lemma~\ref{sec:twist-dirac-oper-6}, there is a non-vanishing index \(\varphi^c(M_1\times M_2;V,0)\) on \(M_1\times M_2\).
By Lemma~\ref{sec:prod-conn-sums} applied in the case \(V_1=V_2=V\) and \(W_1=W_2=0\), it follows that
\begin{equation*}
  M=(M_1\times M_2)\#(M_1\times M_2)
\end{equation*}
has a non-vanishing index.
Because \(\chi(M)=2\cdot 6k \cdot 2^n-2\) is not divisible by three and four, it follows from Corollary~\ref{sec:twist-dirac-oper-7} and \cite[p. 74-84]{0708.17005} that the only compact simply connected semi-simple Lie-group which can act almost effectively on \(M\) is \(SU(2)\). 
Because connected sums of quasitoric manifolds are quasitoric, the statement follows.
\end{proof}

The connected sum of two quasitoric manifolds is again a quasitoric manifold.
Therefore Lemma~\ref{sec:twist-dirac-oper-6} and the following result may be used to construct more quasitoric manifolds with non-vanishing indices.

\begin{lemma}
\label{sec:prod-conn-sums-1}
  Let \(M_1,M_2\) be quasitoric manifolds of dimension \(2n\geq 4\). Assume that there are vector bundles \(V_1\rightarrow M_1\) and \(W_1\rightarrow M_1\) as in Lemma \ref{sec:prod-conn-sums} and \(b_2(M_2)\leq \dim V_1\) or \(M_2\) is a \(\text{Spin}\)-manifold.

Then there are sums of line bundles \(V,W\) over \(M_1\# M_2\) and a \(\text{Spin}^c\)-structure on \(M_1 \# M_2\) with \(c_1(V)=c_1^c(M_1\# M_2)\), \(c_1(W)\equiv 0 \mod 2\), \(p_1(V+W-TM_1\# M_2)=0\), such that
\begin{equation*}
  \varphi^c(M_1\# M_2;V,W)=2^k\varphi^c(M_1;V_1,W_1)
\end{equation*}
for some \(k\geq 0\).
\end{lemma}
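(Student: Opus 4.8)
The plan is to reduce the statement to Lemma~\ref{sec:prod-conn-sums} by producing vector bundles $V_2 \to M_2$ and $W_2 \to M_2$ that are sums of complex line bundles and satisfy the two constraints $c_1(V_2) = c_1^c(M_2)$ and $p_1(V_2 + W_2 - TM_2) = 0$, together with the inequality $\dim V_1 \geq \dim V_2$ needed to put us in the first case of Lemma~\ref{sec:prod-conn-sums}. Once such $V_2, W_2$ are in hand, Lemma~\ref{sec:prod-conn-sums} directly yields a $\text{Spin}^c$-structure on $M_1 \# M_2$, sums of line bundles $V, W$ with the required properties, and the formula $\varphi^c(M_1 \# M_2; V, W) = 2^{\dim_\C W_2}\varphi^c(M_1; V_1, W_1)$, so we may take $k = \dim_\C W_2 \geq 0$.

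First I would treat the case where $M_2$ is $\text{Spin}$: then the canonical $\text{Spin}^c$-structure on $M_2$ has $c_1^c(M_2) = 0$, so we may take $V_2 = 0$, and we must choose a $\text{Spin}$-bundle $W_2$ which is a sum of complex line bundles with $p_1(W_2) = p_1(TM_2)$. Using the splitting $TM_2 \oplus \R^{2m-2n} \cong \bigoplus_{i=1}^m L_i$ with $c_1(L_i) = \pm u_i$, a natural candidate is $W_2 = \bigoplus_{i=1}^m L_i$ itself; since $M_2$ is $\text{Spin}$ we have $\sum u_i \equiv w_2(M_2) = 0 \bmod 2$, and one checks $c_1(W_2) = \sum \pm u_i$ can be arranged to be even by choosing signs appropriately (or by noting $c_1^c$ of this bundle equals $c_1(M_2) \equiv 0$), while $p_1(W_2) = \sum u_i^2 = p_1(M_2)$. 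Then $\dim V_2 = 0 \leq \dim V_1$ automatically, so Lemma~\ref{sec:prod-conn-sums} applies.

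Second I would treat the case $b_2(M_2) \leq \dim V_1$. Here I would pick a facet coloring–type assignment or, more simply, a choice of $b_2(M_2)$ of the characteristic line bundles $L_i$ whose first Chern classes span $H^2(M_2;\mathbb{Z})$ after reduction, and set $V_2$ to be a sum of line bundles of complex dimension $b_2(M_2)$ with $c_1(V_2) \equiv c_1(M_2) \bmod 2$ (choosing signs so that $c_1(V_2) = c_1^c(M_2)$ for a suitable $\text{Spin}^c$-structure), and take $W_2$ to absorb the remaining line bundles so that $p_1(V_2) + p_1(W_2) = p_1(M_2)$; one must check $W_2$ can be made $\text{Spin}$, which follows since $c_1(V_2 \oplus W_2)$ differs from $c_1(M_2)$ by an even class and hence $c_1(W_2)$ is even. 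Then $\dim V_2 = b_2(M_2) \leq \dim V_1$ places us again in the first case of Lemma~\ref{sec:prod-conn-sums}.

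The main obstacle I expect is the bookkeeping of signs and $\text{Spin}^c$-structures: one needs to verify simultaneously that $c_1(V_2)$ realizes $c_1^c$ of a genuine $\text{Spin}^c$-structure on $M_2$, that $W_2$ is honestly $\text{Spin}$ (not merely even-dimensional), and that $p_1(V_2 + W_2 - TM_2) = 0$ holds on the nose rather than modulo torsion — the last being automatic here because on a quasitoric manifold the relevant identity $\sum u_i^2 = p_1(M)$ holds in integral cohomology by the splitting of the stable tangent bundle. The product-to-connected-sum transition and the extraction of the power of $2$ are then entirely routine given Lemma~\ref{sec:prod-conn-sums}.
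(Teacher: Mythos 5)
Your proposal overlooks a crucial case. You write that the inequality $\dim V_1 \geq \dim V_2$ puts you in the first case of Lemma~\ref{sec:prod-conn-sums}, but that case requires \emph{strict} inequality $\dim V_1 > \dim V_2$. The hypothesis only gives $b_2(M_2) \leq \dim V_1$, and your construction makes $\dim V_2 \leq b_2(M_2)$, so equality $\dim V_2 = \dim V_1$ can occur. In that case Lemma~\ref{sec:prod-conn-sums} produces the extra summand $2^{\dim_\C W_1}\varphi^c(M_2;V_2,W_2)$, and your argument leaves this term unaccounted for. The same problem lurks in your Spin case: there $\dim V_2 = 0$, and nothing in the hypothesis forbids $\dim V_1 = 0$ (which happens exactly when $M_1$ is Spin).

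The missing ingredient is Theorem~\ref{sec:twist-dirac-oper-14}. The paper's proof takes $V_2$ and $W_2$ built out of the characteristic line bundles $L_i$ of $M_2$ — precisely the decomposition to which that theorem applies — and concludes $\varphi^c(M_2;V_2,W_2) = 0$. This vanishing kills the extra term in the equality case, and it is what makes the lemma hold with the stated non-strict inequality. Without invoking that vanishing (or strengthening the hypothesis to strict inequality $b_2(M_2) < \dim V_1$), your argument does not close. Once you insert the appeal to Theorem~\ref{sec:twist-dirac-oper-14} for the case $\dim V_2 = \dim V_1$, the rest of your construction of $V_2$ and $W_2$ from the characteristic line bundles is essentially what the paper does, and the sign and parity bookkeeping you sketch is routine as you say.
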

\begin{proof}
  Let \(L_i\rightarrow M_2\), \(i=1,\dots,m\), be line bundles such that the Chern-classes of the \(L_i\) are the Poincar\'e-duals of the characteristic submanifolds of \(M_2\).
Then we have \(TM_2 \oplus \R^{2m-2n}\cong\bigoplus_{i=1}^m L_i\).

We order the \(L_i\) in such a way that
\(c_1(L_1),\dots,c_1(L_{b_2(M_2)})\) form a basis of \(H^2(M_2;\mathbb{Z})\).
Therefore there are \(a_1,\dots,a_{b_2(M)}\in \{0,1\}\) such that
\(c_1(\bigoplus_{i=1}^{b_2(M)} a_i L_i) \equiv c_1(M_2)\mod 2\) and \(W_2=\bigoplus_{i=1}^{b_2(M)} (1-a_i) L_i\oplus \bigoplus_{i=b_2(M)+1}^m L_i\) is a \(\text{Spin}\) bundle.
Consider a \(\text{Spin}^c\)-structure on \(M_2\) such that \(c_1^c(M_2)=c_1(\bigoplus_{i=1}^{b_2(M)} a_i L_i)\).
By Theorem~\ref{sec:twist-dirac-oper-14} we have \(\varphi^c(M_2;V_2,W_2)=0\), where \(V_2=\bigoplus_{i=1}^{b_2(M)} a_i L_i\).
Therefore, by Lemma \ref{sec:prod-conn-sums} the statement follows.
\end{proof}

In dimensions divisible by four we can use Lemma~\ref{sec:prod-conn-sums-1} to improve the results of Corollary \ref{sec:twist-dirac-oper-15}  and prove that there are quasitoric manifolds on which no semi-simple compact Lie-group can act effectively.

\begin{cor}
\label{sec:twist-dirac-oper-13}
  In dimensions \(4k\), \(k>0\), there are infinitely many quasitoric manifolds \(M\) with \(N^{ss}(M)=0\).
\end{cor}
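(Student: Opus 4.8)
The plan is to refine the construction used for Corollary~\ref{sec:twist-dirac-oper-15}. There the ambient manifold was a self--connected--sum $(M_1\times M_2)\#(M_1\times M_2)$, whose Euler characteristic is automatically even; this left room for an $SU(2)$-action, giving only $N^{ss}\le 3$. To exclude \emph{every} positive-dimensional semi-simple compact connected Lie-group I would instead build a quasitoric manifold of dimension $4k$ which carries a non-vanishing index and whose Euler characteristic is \emph{odd}, and then invoke Corollary~\ref{sec:twist-dirac-oper-7}: an odd integer is divisible by the order of no non-trivial Weyl group.

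Write $2n=4k$, so $n=2k$. For each integer $\ell\ge 2$ I would let $M_1$ be a quasitoric manifold of dimension $4k$ over the product $P_\ell$ of a polygon with $2\ell$ vertices and the $(2k-2)$-dimensional cube (for $k=1$, $P_\ell$ is the polygon itself). Since the polygon admits a facet $2$-coloring and the $(2k-2)$-cube a facet $(2k-2)$-coloring, $P_\ell$ admits a facet coloring with $2k$ colors, so Lemma~\ref{sec:twist-dirac-oper-6} provides a $\text{Spin}^c$-structure on $M_1$ and a sum of line bundles $V_1$ with $c_1(V_1)=c_1^c(M_1)$, $p_1(V_1)=p_1(M_1)$ and $\dim_\C V_1=2k$ such that $\varphi^c(M_1;V_1,0)\ne 0$; putting $W_1=0$, the bundles $V_1,W_1$ are as in Lemma~\ref{sec:prod-conn-sums}. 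Note that $\chi(M_1)$ equals the number of vertices of $P_\ell$, namely $2\ell\cdot 2^{2k-2}$, which is even; this parity is in fact unavoidable for quasitoric manifolds whose orbit polytope has as many colors as its dimension, since such a polytope has a regular bipartite edge graph, so a connected sum really is needed to produce an odd Euler characteristic.

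Next I would set $M=M_1\#\C P^{2k}$, which is again quasitoric. Since $\C P^{2k}$ is quasitoric with $b_2(\C P^{2k})=1\le 2k=\dim_\C V_1$, Lemma~\ref{sec:prod-conn-sums-1} applies and produces sums of line bundles $V,W$ over $M$ together with a $\text{Spin}^c$-structure satisfying the hypotheses of Theorem~\ref{sec:twist-dirac-oper-8} for which
\[
\varphi^c(M;V,W)=2^{k'}\,\varphi^c(M_1;V_1,0)\ne 0
\]
for some $k'\ge 0$; thus $M$ has a non-vanishing index. Moreover
\[
\chi(M)=\chi(M_1)+\chi(\C P^{2k})-2=2\ell\cdot 2^{2k-2}+2k-1,
\]
which is odd. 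If a positive-dimensional semi-simple compact connected Lie-group $G$ acted effectively on $M$, then $G$ would be non-abelian, so its Weyl group $W(G)$ would be a non-trivial finite group generated by reflections, hence of even order, while Corollary~\ref{sec:twist-dirac-oper-7} would force $\#W(G)\mid\chi(M)$ --- impossible. Therefore $N^{ss}(M)=0$. Finally $b_2(M)=b_2(M_1)+1=2\ell+2k-3$ strictly increases with $\ell$, so the manifolds obtained for different $\ell$ are pairwise non-homeomorphic, and since connected sums of quasitoric manifolds are quasitoric we get infinitely many quasitoric manifolds of dimension $4k$ with $N^{ss}=0$.

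The only genuinely delicate point I expect is the bookkeeping required to apply Lemma~\ref{sec:prod-conn-sums-1}: checking that the $b_2$-condition on the second summand holds and that the $\text{Spin}^c$-datum and bundles $V,W$ it outputs really meet all the hypotheses of Theorem~\ref{sec:twist-dirac-oper-8}, so that Corollary~\ref{sec:twist-dirac-oper-7} is legitimately applicable. Everything else is elementary: the behavior of the Euler characteristic under connected sum, the fact that $\C P^{2k}$ contributes an odd Euler characteristic, and the even order of the Weyl group of a non-abelian compact connected Lie-group.
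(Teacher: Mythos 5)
Your proof is correct and follows essentially the same strategy as the paper: start from a quasitoric manifold whose orbit polytope admits a facet $n$-coloring (hence a non-vanishing index by Lemma~\ref{sec:twist-dirac-oper-6}), form a connected sum with $\C P^{2k}$ using Lemma~\ref{sec:prod-conn-sums-1} to preserve the non-vanishing index while making the Euler characteristic odd, and conclude via Corollary~\ref{sec:twist-dirac-oper-7} together with the even order of any non-trivial Weyl group. The only difference is how you produce infinitely many examples: the paper fixes a single base manifold $M'$ and takes $M = M' \# l\,\C P^{2k}$ for all odd $l$ (relying on iterated application of Lemma~\ref{sec:prod-conn-sums-1}), while you fix a single $\C P^{2k}$ summand and vary the polytope $P_\ell$, distinguishing the results by $b_2$. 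Both variations are sound; the paper's phrasing is marginally more economical since it does not need a specific family of base polytopes, but yours makes the parity argument for $\chi(M_1)$ concrete rather than appealing to the general bipartite-graph observation.
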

\begin{proof}
  Let \(M'\) be as in Lemma \ref{sec:twist-dirac-oper-6} with \(\dim M'=2n=4k\).
 Then, by an iterated application of Lemma~\ref{sec:prod-conn-sums-1}, there are non-vanishing indices \(\varphi^c(M;V,W)\) on \(M=M'\# l \C P^{2k}\) with \(l\in \mathbb{N}\).
Because connected sums of quasitoric manifolds are quasitoric, \(M\) is quasitoric.

Since a bipartite regular graph has an even number of vertices, it follows from Theorem \ref{sec:twist-dirac-oper-10} that the Euler-characteristic of \(M'\) is even.
Therefore \(\chi(M)=\chi(M')+l\chi(\C P^n) -2l\) is odd if \(l\) is odd.
Because the order of the Weyl-group of a semi-simple compact connected Lie-group is even \cite[p. 74-84]{0708.17005},
the statement follows from Corollary~\ref{sec:twist-dirac-oper-7}.
\end{proof}

\begin{remark}
  Non-singular projective toric varieties are examples of quasitoric manifolds.
  If, in the situation of the proof of Corollary~\ref{sec:twist-dirac-oper-13}, \(M'\) is such a variety, then we can construct infinitely many non-singular toric varieties \(M\) with \(N^{ss}(M)=0\) by blowing-up isolated fixed points in \(M'\) repeatedly, i.e. by taking connected sums with several copies of \(\bar{\C P}^n\).
\end{remark}

\section{Quasitoric manifolds admitting low cohomogeneity actions}
\label{sec:cohom1}

In this section we study quasitoric manifolds which admit a cohomogeneity one or zero action of a compact connected Lie-group and have a non-zero index \(\varphi^c(M;V,W)\).
To do so we need the notion of spaces of \(q\)-type which was introduced by Hauschild \cite{0623.57024}.
A space of \(q\)-type is defined to be a topological space \(X\) satisfying the following cohomological properties:
\begin{itemize}
\item The cohomology ring \(H^*(X;\Q)\) is generated as a \(\Q\)-algebra by elements of degree two, i.e. \(H^*(X;\Q)=\Q[x_1,\dots,x_n]/I_0\) and \(\deg x_i=2\).
\item The defining ideal \(I_0\) contains a definite quadratic form \(Q\).
\end{itemize}

Examples of spaces of \(q\)-type are homogenous spaces of the form \(G/T\) where \(G\) is a semi-simple compact connected Lie-group and \(T\) a maximal torus of \(G\).
Quasitoric manifolds of \(q\)-type were studied in \cite{chapter6}.

For the proof of the main result of this section we need the following lemma.

\begin{lemma}
\label{sec:quas-manif-with}
  Let \(F\rightarrow E\rightarrow B\) be a fibration such that \(\pi_1(B)\) acts trivially on \(H^*(F;\Q)\). If \(F\) and \(B\) are spaces of \(q\)-type then \(E\) is a space of \(q\)-type.
\end{lemma}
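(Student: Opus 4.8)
The plan is to run the Serre spectral sequence of the fibration $F \to E \to B$ with $\Q$-coefficients. First I would record that, since $F$ and $B$ are of $q$-type, the rings $H^*(F;\Q)$ and $H^*(B;\Q)$ are quotients of polynomial algebras on generators of degree two, hence are concentrated in even degrees; in particular $H^{\mathrm{odd}}(B;\Q)=H^{\mathrm{odd}}(F;\Q)=0$. As $\pi_1(B)$ acts trivially on $H^*(F;\Q)$, the $E_2$-page is $E_2^{p,q}=H^p(B;\Q)\otimes H^q(F;\Q)$, which vanishes unless both $p$ and $q$ are even. Since $d_r$ shifts bidegree by $(r,1-r)$, it would have to change the parity of exactly one of the two indices; therefore every $d_r$ vanishes and the spectral sequence collapses at $E_2$. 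Consequently $p^*\colon H^*(B;\Q)\to H^*(E;\Q)$ is injective, the fibre restriction $i^*\colon H^*(E;\Q)\to H^*(F;\Q)$ is surjective, and $H^*(E;\Q)$ carries a finite multiplicative filtration $F^\bullet$ whose associated graded ring is $H^*(B;\Q)\otimes H^*(F;\Q)$.

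Next I would show $H^*(E;\Q)$ is generated as a $\Q$-algebra in degree two. Fix degree-two generators $y_1,\dots,y_k$ of $H^*(F;\Q)$ and, using surjectivity of $i^*$, choose lifts $\tilde y_i\in H^2(E;\Q)$ with $i^*\tilde y_i=y_i$; let $A\subseteq H^*(E;\Q)$ be the subalgebra generated by $p^*H^2(B;\Q)$ and the $\tilde y_i$. Since $B$ is of $q$-type, $p^*H^*(B;\Q)\subseteq A$. Working in a fixed total degree and inducting downward on the (finite) filtration, one shows $F^pH^*(E;\Q)\subseteq A$: given $z\in F^p$, its image in $F^p/F^{p+1}=H^p(B;\Q)\otimes H^*(F;\Q)$ is a sum $\sum_j b_j\otimes g_j(y)$ with $g_j$ polynomials, and then $z-\sum_j p^*(b_j)\,g_j(\tilde y)\in F^{p+1}$ by multiplicativity of the filtration, so the inductive hypothesis applies. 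Hence $A=H^*(E;\Q)$, and writing $w_1,\dots,w_N$ for a basis of $H^2(E;\Q)$ we get $H^*(E;\Q)=\Q[w_1,\dots,w_N]/I_0$ with $I_0$ the kernel of the evaluation map.

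It remains to produce a definite quadratic form in $I_0$. Take a basis of $H^2(E;\Q)$ of the form $\{p^*z_1,\dots,p^*z_m\}\cup\{\tilde y_1,\dots,\tilde y_k\}$, with $z_i$ a basis of $H^2(B;\Q)$ and $y_i$ a basis of $H^2(F;\Q)$ (this is a basis because $E_\infty^{1,1}=0$). Let $Q_B$ and $Q_F$ be the definite quadratic forms in the $z_i$, $y_i$ given by the $q$-type hypotheses, both taken positive definite after possibly replacing by negatives. Then $Q_B(p^*z)=p^*Q_B=0$ in $H^4(E;\Q)$, while $Q_F(\tilde y)$ restricts to $Q_F(y)=0$ on the fibre, hence lies in $F^1H^4(E;\Q)$; since $H^1(B;\Q)=0$ one even has $Q_F(\tilde y)\in F^2H^4(E;\Q)$, and $F^3H^4(E;\Q)=E_\infty^{4,0}=p^*H^4(B;\Q)$ because $E_\infty^{3,1}=H^3(B;\Q)\otimes H^1(F;\Q)=0$. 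Chasing the class of $Q_F(\tilde y)$ first through $E_\infty^{2,2}=H^2(B;\Q)\otimes H^2(F;\Q)$ and then through $E_\infty^{4,0}$, and using that $H^4(B;\Q)$ is spanned by products $z_iz_{i'}$, one obtains an identity
\begin{equation*}
  Q_F(\tilde y)=\sum_{i,j}a_{ij}\,p^*(z_i)\,\tilde y_j+\sum_{i,i'}b_{ii'}\,p^*(z_i)\,p^*(z_{i'})\qquad\text{in }H^4(E;\Q),
\end{equation*}
so the quadratic form $P=Q_F(\tilde y)-\sum a_{ij}p^*(z_i)\tilde y_j-\sum b_{ii'}p^*(z_i)p^*(z_{i'})$ lies in $I_0$. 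Its $y$--$y$ block is $Q_F$, hence positive definite, while its $z$--$y$ and $z$--$z$ blocks are fixed bounded data; adding a large positive multiple $t\,Q_B(p^*z)\in I_0$ makes the $z$--$z$ block positive definite and dominant, so by a standard Schur-complement estimate $Q_E=P+t\,Q_B(p^*z)$ is positive definite for $t\gg 0$. Thus $Q_E$ is a definite quadratic form in $w_1,\dots,w_N$ lying in $I_0$, and $E$ is of $q$-type.

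I expect the last step to be the main obstacle: collapse of the spectral sequence and generation in degree two are routine once the even-degree observation is in place, but assembling a single definite form valid in all generators of $H^2(E;\Q)$ requires the filtration bookkeeping relating $Q_F(\tilde y)$ to classes pulled back from $B$, together with the elementary fact that a symmetric form with positive definite diagonal blocks becomes positive definite once one block is scaled up.
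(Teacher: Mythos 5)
Your proof is correct and follows essentially the same route as the paper: both use the Serre spectral sequence to obtain generation of $H^*(E;\Q)$ in degree two, both find a vanishing quadratic form with $y$--$y$ block $Q_F$ by expressing $Q_F(\tilde y)$ in cross and base terms, and both then add a large multiple of $\pi^*Q_B$ and verify positive definiteness (the paper via a compactness estimate on the sphere, you via a Schur-complement estimate — two phrasings of the same domination argument). Your treatment of the filtration step is more detailed than the paper's, which merely asserts the existence of the correction coefficients, but the underlying ideas coincide.
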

\begin{proof}
  Because \(H^*(F;\Q)\) and \(H^*(B;\Q)\) are generated by their degree two parts, it follows from the Serre spectral sequence that \(H^*(E;\Q)\) is generated by its degree two part.
Let \(x_1,\dots,x_m\) be a basis of \(H^2(F;\Q)\) and \(y_1,\dots,y_{m'}\) be a basis of \(H^2(B;\Q)\).
Then there is a basis \(X_1,\dots,X_m,Y_1,\dots,Y_{m'}\) of \(H^2(E;\Q)\) such that \(\iota^*X_i=x_i\), \(i=1,\dots,m\), and \(\pi^*y_i=Y_i\), \(i=1,\dots,m'\).
Here \(\iota:F\rightarrow E\) is the inclusion and \(\pi:E\rightarrow B\) is the projection.

Let \(Q_F\) and \(Q_B\) be positive definite bilinear forms such that \(Q_F(x_1,\dots,x_m)=0\in H^4(F;\Q)\) and  \(Q_B(y_1,\dots,y_{m'})=0\in H^4(B;\Q)\).

Then there are \(\alpha_{11},\dots,\alpha_{mm'}\in \Q\) and \(\beta_1,\dots,\beta_{m'}\in \Q\) such that for all \(\lambda\in \Q\):
\begin{align*}
  Q_\lambda(X_1,\dots,X_m,Y_1,\dots,Y_{m'})&= Q_F(X_1,\dots,X_m) + \lambda Q_B(Y_1,\dots,Y_{m'})\\ & + \sum_{i,j} \alpha_{ij} X_iY_j+ \sum_i \beta_i Y_i^2 \\ &=0\in H^4(E;\Q).
\end{align*}

We claim that \(Q_\lambda\) is positive definite for sufficient large \(\lambda\).
To see that it is sufficient to show that for all \(a\in S^{m+m'-1}\subset \R^{m+m'}\), \(Q_\lambda(a)>0\).
We may write \(a=\gamma_1 x + \gamma_2 y\) with \(x\in \R^m\), \(y\in \R^{m'}\), \(\|x\|=\|y\|=1\) and \(\gamma_1^2+\gamma_2^2=1\).

Because \(Q_F\) is positive definite and  \(S^{m+m'-1}\cap \R^m\) is compact, there is an \(\epsilon >0\) such that \(Q_\lambda(a) - \lambda Q_B(a)>0\) for all \(\gamma_2<\epsilon\).

Because \(Q_B\) is positive definite and \(S^{m+m'-1}\cap \{\gamma_2\geq \epsilon\}\) is compact, we may take \(\lambda\) sufficiently large such that
\begin{align*}
  \lambda \min \{Q_B(a);\;a\in S^{m+m'-1}\cap \{\gamma_2\geq \epsilon\}\}& > -
 \min \{Q_\lambda(a)-\lambda Q_B(a);\\
 &\quad\quad a\in S^{m+m'-1}\cap \{\gamma_2\geq \epsilon\}\}.
\end{align*}
Therefore \(Q_\lambda\) is positive definite for sufficient large \(\lambda\).
\end{proof}

Now we can prove the following theorem.

\begin{theorem}
\label{sec:quas-manif-with-1}
  Let \(M\) be a quasitoric manifold on which a compact connected Lie-group \(G\)  acts  such that \(\dim M/G\leq 1\).
  Assume that \(M\) has a non-vanishing index \(\varphi^c(M;V,W)\) with \(V\), \(W\) as in Theorem~\ref{sec:twist-dirac-oper-8}.

 Then \(M=\prod S^2\) if \(\dim M/G=0\) or \(M\) is a \(S^2\)-bundle with structure group a maximal torus of \(G\) over \(\prod S^2\) if \(\dim M/G=1\).
\end{theorem}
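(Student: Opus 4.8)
The plan is to treat the cases \(\dim M/G=0\) and \(\dim M/G=1\) separately, extracting the orbit structure from Corollary~\ref{sec:twist-dirac-oper-5} and the decisive rigidity from Corollary~\ref{sec:twist-dirac-oper-11}. Write \(T\) for a maximal torus of \(G\). Since \(M\) is quasitoric, \(\chi(M)>0\), hence \(\chi(M^T)=\chi(M)>0\) and \(M^T\neq\emptyset\). First note that \(G\) is non-abelian: an abelian \(G\) would have all orbits of dimension \(\le\rank G\le T(M)=n<2n-1\) and of Euler characteristic zero, incompatible with \(\dim M/G\le 1\) and \(\chi(M)>0\). Thus Corollary~\ref{sec:twist-dirac-oper-5} applies and every \(x\in M^T\) has isotropy \(G_x=T\), so its orbit is \(G/T\). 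If \(\dim M/G=0\) the action is transitive, so \(M=G/H\) with \(H\) conjugate to \(T\) (all isotropy groups of a transitive action being conjugate), i.e.\ \(M=G/T\); as \(M\) is quasitoric \(T(M)=n\), while \(T(G/T)=\rank G\) by Hauschild's theorem (\cite{0623.57024}), so \(\rank G=n\) and \(\dim G=\dim(G/T)+\rank G=3n\), whence \(M=\prod S^2\) by Corollary~\ref{sec:twist-dirac-oper-11}.

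Now suppose \(\dim M/G=1\). The orbit space is a connected \(1\)-manifold, hence \(S^1\) or \([0,1]\); it cannot be \(S^1\), since then all orbits would be principal of odd dimension \(2n-1\), whereas the orbit \(G/T\) of a point of \(M^T\) has even dimension \(\dim G-\rank G\). So \(M/G=[0,1]\), with singular orbits \(G/K_0\), \(G/K_1\) over the endpoints and principal orbits \(G/K\) over the interior. Being even-dimensional, \(G/T\) is not principal, so it is a singular orbit, say over \(0\); thus \(K_0=T\) up to conjugacy. For \(x\in M^T\) lying in \(G/T\), the slice representation of \(T\) on the normal space \(\nu_x\) must act transitively on \(S(\nu_x)\), so that \(D(\nu_x)/T\) is a half-open interval; since \(\dim\nu_x=2n-\dim(G/T)\) is even and a torus acting transitively on a sphere forces that sphere to be \(S^1\), we get \(\dim\nu_x=2\), \(\dim(G/T)=2n-2\), and a \(G\)-invariant neighbourhood of \(G/T\) is the disk bundle of a complex line bundle \(L=G\times_T\C\) over \(N:=G/T\). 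The principal isotropy \(K=\kernel(T\to S(\nu_x))\) is a corank-one subtorus of \(T\).

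Next I would pin down the other singular orbit. If \(\rank K_1=\rank G\), then \(K_1\) contains a conjugate of \(T\), so \((G/K_1)^T\neq\emptyset\) and Corollary~\ref{sec:twist-dirac-oper-5} forces \(K_1=T\). If \(\rank K_1<\rank G\), then \(K\subset K_1\) is a maximal torus of \(K_1\) and \(K_1/K\cong S(\nu')\) is a generalized flag manifold that is a sphere, hence \(S^2\); so \(K_1\) is a quotient of \(SU(2)\), \(\rank G=2\), and a Mayer--Vietoris computation for \(M=D(L)\cup_{G/K}(\text{disk bundle over }G/K_1)\) gives \(b_2(M)=1\), so that \(M=\C P^n\); but then \(N(\C P^n)=n^2+2n>3n\) contradicts Corollary~\ref{sec:twist-dirac-oper-11}. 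Hence \(K_1=T\), and \(M=D(L)\cup_{S(L)}D(L')=G\times_T\C P^1\) is the total space of an \(S^2\)-bundle over \(N=G/T\) whose structure group reduces to \(T\).

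Finally I would identify \(N=G/T\) with \(\prod S^2\). As a generalized flag manifold \(N\) is a space of \(q\)-type, so by Lemma~\ref{sec:quas-manif-with} the quasitoric manifold \(M\) is of \(q\)-type; feeding this into the description of quasitoric manifolds of \(q\)-type from \cite{chapter6} together with the bundle \(M\to N\) shows that \(N\) must itself be quasitoric, and the only generalized flag manifold that is quasitoric is \(\prod S^2\) (any other has a simple factor of rank \(\ge 2\), whose flag manifold has torus symmetry degree strictly less than half its dimension). Thus \(N=\prod S^2\), completing the proof. The main obstacle is precisely this last step: the cohomogeneity-one structure theory together with Corollaries~\ref{sec:twist-dirac-oper-5} and \ref{sec:twist-dirac-oper-11} produce the \(S^2\)-bundle over \(G/T\) with relatively little effort, but it is only the quasitoric hypothesis — used through the \(q\)-type property of \(M\) — that rules out flag-manifold bases other than \(\prod S^2\), which otherwise genuinely occur.
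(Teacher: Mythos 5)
Your overall strategy mirrors the paper's — cohomogeneity-one structure theory plus Corollary~\ref{sec:twist-dirac-oper-5} to pin down the orbits, then Lemma~\ref{sec:quas-manif-with} and the results of \cite{chapter6} to force the base to be \(\prod S^2\) — and your treatment of the case \(\dim M/G=0\) via Hauschild's formula \(T(G/T)=\rank G\) combined with Corollary~\ref{sec:twist-dirac-oper-11} is a legitimate alternative to the paper's citation of \cite[Section 3]{chapter6}. However, in the cohomogeneity-one case your analysis of the second non-principal isotropy \(K_1\) has a genuine gap: you only consider the possibilities that \(K_1\) has full rank or that \(K_1/K\) is a positive-dimensional sphere, thereby omitting the exceptional-orbit case \(K_1/K\cong S^0\) (i.e.\ \(\dim K_1=\dim K\) and \(K_1/K\cong\mathbb{Z}_2\)). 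This is precisely where the paper does real work: it rules this case out via Seifert--van Kampen, computing \(\pi_1(M)\cong\pi_1(G/K^+)/\pi_1(G/S)\cong\mathbb{Z}_2\) from the twofold cover \(G/S\to G/K^+\) and contradicting simple connectedness of a quasitoric manifold. Without this step the classification of the orbit structure is incomplete.

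Two further points are worth flagging. First, your Mayer--Vietoris argument in the subcase \(\rank K_1<\rank G\), \(K_1\) non-abelian, is both under-justified (the claim \(b_2(M)=1\) is asserted rather than proved, and quasitoric with \(b_2=1\) requires a further step to conclude \(M=\C P^n\)) and unnecessary: the paper disposes of a non-abelian \(K^+\) immediately by Theorem~\ref{sec:twist-dirac-oper-8} — a non-abelian \(K_1\) fixes a point of \(M\), so it contains a \(\text{Pin}(2)\)-type subgroup fixing that point, forcing \(\varphi^c(M;V,W)=0\), contradiction. Second, your final step asserting that the bundle projection forces the base \(N=G/T\) to be quasitoric, and then that \(\prod S^2\) is the only quasitoric flag manifold, is not justified as stated; there is no obvious reason the quasitoric torus action on \(M\) descends to \(N\). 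The paper avoids this by applying Theorem 5.3 of \cite{chapter6} directly to the \(q\)-type quasitoric manifold \(M\), which already yields that \(M\) is an \(S^2\)-bundle over \(\prod S^2\); you should cite that result rather than attempt to re-derive it.
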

\begin{proof}
  If \(\dim M/G=0\) then \(M\) is a homogeneous space \(G/H\).
  Because \(\chi(M)\neq 0\), \(H\) must have maximal rank in \(G\).
  Therefore we may assume that \(G\) is semi-simple.
  Hence, it follows from Corollary~\ref{sec:twist-dirac-oper-5} that \(H\) is a maximal torus of \(G\).
  As in section 3 of \cite{chapter6}, one sees that \(M=\prod S^2\).

  Now assume that \(\dim M/G=1\).
Because \(\chi(M)\neq 0\), it follows from Corollary~\ref{sec:twist-dirac-oper-5} that there is an orbit of type \(G/T\) with \(T\) a maximal torus of \(G\).
 Because \(\dim G/T\) is even this must be a non-principal orbit.
Hence, the orbit space \(M/G\) is homeomorphic to the compact interval \([0,1]\) and there is exactly one other non-principal orbit.
Let \(S\subset G\) be a principal isotropy group.
Then we may assume \(S\subset T\).
Moreover, \(T/S\) is a sphere.
Therefore \(S\) has codimension one in \(T\).

Let \(K^+\) be the isotropy group of the other non-principal orbit.
Then \(K^+/S\) is a sphere and the identity component of \(K^+\) is a torus by Theorem \ref{sec:twist-dirac-oper-8}.
Therefore there are two cases
\begin{itemize}
\item \(\dim K^+=\dim S\) and \(K^+/S=\mathbb{Z}_2\).
\item \(K^+\) is a maximal torus of \(G\).
\end{itemize}

In the first case, we have by Seifert-van Kampen's theorem
\begin{equation*}
  \pi_1(M)= \pi_1(G/T)*_{\pi_1(G/S)}\pi_1(G/K^+)= \pi_1(G/K^+)/\pi_1(G/S)=\mathbb{Z}_2
\end{equation*}
because \(G/S\rightarrow G/K^+\) is a twofold covering.
But \(M\) is simply connected because it is quasitoric.
So this case does not occur.

Now as in the remark before Lemma 5.2 of \cite{chapter6} one sees that \(M\) is a \(S^2\)-bundle with structure group \(T\) over \(G/T\).
By Lemma~\ref{sec:quas-manif-with}, \(M\) is a quasitoric manifold which is of \(q\)-type.
Therefore it follows from Theorem 5.3 of \cite{chapter6} that \(M\) is a \(S^2\)-bundle over \(\prod S^2\).
\end{proof}

\section{Quasitoric manifolds with non-vanishing indices and $N(M)\geq 3n-4$}
\label{sec:cube}

By Corollary~\ref{sec:twist-dirac-oper-11} the symmetry degree of a quasitoric manifold \(M\) with a non-trivial index \(\varphi^c(M;V,W)\) is bounded from above by \(3n\).
In this section we classify those \(2n\)-dimensional quasitoric manifolds which admit a twisted Dirac-operator with a non-vanishing index and have degree of symmetry greater or equal to \(3n-4\).

For the statement of our first theorem we need the notion of a torus manifold.
A torus manifold is a \(2n\)-dimensional closed connected orientable smooth manifold \(M\) with an effective smooth action of an \(n\)-dimensional torus \(T\), such that \(M^T\) is non-empty.

\begin{theorem}
\label{sec:quas-manif-over}
  Let \(M\) be a \(2n\)-dimensional quasitoric manifold with non-vanishing index \(\varphi^c(M;V,W)\) with \(V\), \(W\) as in Theorem~\ref{sec:twist-dirac-oper-8} and \(G\) be a compact connected Lie-group of rank \(n\), which acts almost effectively on \(M\).
  Then \(G\) has a covering group of the form \(\prod SU(2)\times T^{l_0}\).
  Moreover, \(M\) is a fiber bundle with fiber a \(2l_0\)-dimensional torus manifold over \(\prod S^2\).
\end{theorem}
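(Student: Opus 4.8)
The plan is to first determine $G$ up to covering, then realize $M$ as a fiber bundle.

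\emph{Reductions.} Let $\tilde G = G' \times T_0$ be a finite covering of $G$ with $G'$ semisimple, simply connected and compact and $T_0$ a torus; set $r = \rank G'$ and $l_0 = \dim T_0$, so that $r + l_0 = n$ and the $\tilde G$- and $G'$-actions remain almost effective. If $G$ is abelian then $G' = \{e\}$, $l_0 = n$, and $M$ (with its effective $T^n$-action, after dividing out the finite kernel) is itself a $2n$-dimensional torus manifold, i.e.\ the bundle over a point; so assume $G$ is non-abelian. As in the proof of Corollary~\ref{sec:twist-dirac-oper-3}, for every $x \in M$ the identity component $(G'_x)^0$ is a torus: otherwise it is a non-abelian compact connected group $H$, there is a homomorphism $\text{Pin}(2) \to H$ with finite kernel, and the resulting action on $M$ (which is nice because $M$ is simply connected and $G'$, being connected, acts trivially on $H^*(M)$; the image is again of the type required by Theorem~\ref{sec:twist-dirac-oper-8}) fixes $x$, so Theorem~\ref{sec:twist-dirac-oper-8} gives $\varphi^c(M;V,W) = 0$, a contradiction. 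In particular $\dim G'_x \le r$, so every $G'$-orbit has dimension $\ge \dim G' - r$.

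\emph{Identifying $G$.} Since $\chi(M) \ne 0$ we have $M^T \ne \emptyset$ for a maximal torus $T$ of $G$; fix $x_0 \in M^T$. By Corollary~\ref{sec:twist-dirac-oper-5} we have $G_{x_0} = T$, hence $\tilde G_{x_0} = T' \times T_0$ with $T'$ a maximal torus of $G'$, so the orbit $\tilde G x_0 \cong G'/T'$ is embedded in $M$. Moreover $T$ acts faithfully on $T_{x_0}M$ (a subtorus acting trivially there would act trivially near $x_0$, hence on all of the connected manifold $M$, hence lie in the finite kernel), so the nonzero weights of the $2n$-dimensional real $T$-representation $T_{x_0}M$ span $H^2(BT;\Q) \cong \Q^n$. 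Now $T_{x_0}M = LG'/LT' \oplus E$ as $T$-representations, where $E$ is the slice representation, and $T$ acts on $LG'/LT'$ with weights the roots of $G'$; these are pulled back from $T'$, hence lie in the direct summand $H^2(BT';\Q)$ (of rank $r$) and span it, and they occupy exactly $|R^+(G')| = \frac{1}{2}(\dim G' - r)$ of the weight planes. Since $E$ has real dimension $2n - 2|R^+(G')|$ it contributes at most $n - |R^+(G')|$ weight planes, and as the roots span only $H^2(BT';\Q)$ the weights of $E$ must span the complementary summand $H^2(BT_0;\Q)$, of rank $l_0$; hence $l_0 \le n - |R^+(G')|$, i.e.\ $\dim G' = r + 2|R^+(G')| \le 3r$. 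Since $\dim G_i \ge 3\rank G_i$ for every simple compact factor $G_i$, with equality only for quotients of $SU(2)$ (as in the proof of Corollary~\ref{sec:twist-dirac-oper-11}), each simple factor of $G'$ is $SU(2)$; thus $\tilde G \cong \prod_{i=1}^{r} SU(2) \times T^{l_0}$, which is the first assertion.

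\emph{The bundle.} Now $G'/T' \cong \prod_{i=1}^{r} S^2$. The goal is to realize $M$ as a $G'$-equivariant fiber bundle $G' \times_{T'} F$; projecting to $G'/T' \cong \prod S^2$ then displays $M$ as a bundle with fiber $F = \pi^{-1}(x_0 T')$, and since $T_0$ commutes with $G'$ it preserves the fibers and fixes $x_0$, so $F$ is a $2l_0$-dimensional torus manifold, as required. For this step I would combine the theory of compact transformation groups with the arguments of \cite{chapter6}: using the toral-isotropy property, the lower bound $\dim(G'\text{-orbit}) \ge \dim G' - r = 2r$, and simple connectedness of $M$, one shows that the $G'$-action has no exceptional orbits and a single orbit type $(T')$ — the key being that a larger $G'$-orbit anywhere would make $\dim M/G' < 2l_0$, which contradicts the existence of an almost effective $T_0$-action on $M/G'$ with fixed points (the image of $M^T$). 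Then $M \to M/G'$ is a fiber bundle with fiber $G'/T' = \prod S^2$, and reading this bundle along the principal bundle $G' \to G'/T'$ gives the stated description, with $F \cong M/G'$. Producing the special orbit $G'/T'$ and deducing $G' = \prod SU(2)$ from the weight-span count is the clean part; the real obstacle is this last step — controlling the orbit structure of the $G'$-action precisely enough to get an honest fiber bundle (ruling out exceptional orbits and orbits of non-maximal isotropy rank, and verifying that the induced $T_0$-action on the quotient is again almost effective with nonempty fixed-point set) — which is exactly where the equivariant structure results of \cite{chapter6}, or a reworking of them in the present situation, are needed.
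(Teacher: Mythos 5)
Your weight-counting argument for pinning down $\tilde G$ up to covering is correct and is a genuinely different route from the paper's. You locate a $T$-fixed point $x_0$, invoke Corollary~\ref{sec:twist-dirac-oper-5} to get $G_{x_0}=T$, decompose the isotropy representation as $LG'/LT'\oplus E$, and observe that the roots span $H^2(BT';\Q)$ while the $E$-weights must span the complementary $l_0$-dimensional summand, forcing $|R^+(G')|\le r$ and hence $\dim G'\le 3r$; with $\dim G_i\ge 3\rank G_i$ for simple $G_i$ (equality only for quotients of $SU(2)$) this gives $\tilde G\cong \prod SU(2)\times T^{l_0}$. The paper instead quotes Remark~2.9 of \cite{torus} to get $\tilde G=\prod SU(l_i+1)\times T^{l_0}$ directly from the fact that $M$ is a torus manifold with $G$-action and degree-two generated cohomology, and then uses Lemmas~3.1 and 3.4 of \cite{torus} together with Corollary~\ref{sec:twist-dirac-oper-5} to force each $l_i=1$. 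Your version is self-contained and, for this part, arguably simpler; both give the same conclusion.

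The bundle step, however, has a genuine gap, which you yourself flagged as the obstacle. Your plan is to show that the $G'$-action has a single orbit type $(T')$ and then read off $M=G'\times_{T'}F$. But single orbit type is \emph{false} in this generality, and it is also not needed for $M=G'\times_{T'}F$. Take $M$ to be an even Hirzebruch surface $\mathbb{F}_{2k}=\mathbb{P}(\mathcal{O}\oplus\mathcal{O}(2k))$ over $\C P^1$, $k\neq 0$: it is quasitoric over a quadrilateral, hence by Lemma~\ref{sec:twist-dirac-oper-6} carries a non-vanishing index, and it admits an almost effective action of $SO(3)\times S^1$, of rank $2=n$. The $SO(3)=G'$-orbits are the two holomorphic sections (type $(T')$, dimension $2$) and a one-parameter family of $3$-dimensional orbits with finite isotropy, so the orbit type is not constant. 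Here $\dim M/G'=1<2=2l_0$, yet no contradiction arises: $T_0$ acts effectively on $M$ but its induced action on $M/G'$ is \emph{trivial}, because a given $t\in T_0$ can preserve each $G'$-orbit setwise (composing with a $G'$-translation). So ``almost effective $T_0$-action on $M/G'$ with fixed points'' is not a consequence of the hypotheses, and the proposed contradiction does not materialize. Nevertheless $\mathbb{F}_{2k}\to\C P^1$ is an $SO(3)$-equivariant $S^2$-bundle, so the theorem's conclusion holds. The paper gets around this differently: having shown that all isotropy groups have toral identity component, it deduces that no $SU(2)$-factor has a fixed point in $M$, and then applies Corollary~5.6 of \cite{torus} iteratively, which peels off one $S^2$-factor at a time without requiring constant orbit type. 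If you want a self-contained argument, you would need to reprove something like that corollary; the approach via single orbit type and the dimension count on $M/G'$ will not do it.
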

\begin{proof}
  \(M\) is a torus manifold with \(G\)-action in the sense of \cite{torus} and \(H^*(M;\mathbb{Z})\) is generated by \(H^2(M;\mathbb{Z})\). Therefore \(G\) has a covering group of the form  \(\tilde{G}=\prod_i SU(l_i+1)\times T^{l_0}\) by Remark 2.9 of \cite{torus}.
  Let \(T\) be a maximal torus of \(G\) and \(x\in M^T\). Then, by Lemmas 3.1 and 3.4 of \cite{torus}, we have
  \begin{align*}
    SU(l_i+1)_{x}&=S(U(l_i)\times U(1)) & \text{or}&& SU(l_i+1)_{x}&=SU(l_i+1).
  \end{align*}
  Therefore by Corollary~\ref{sec:twist-dirac-oper-5}, we have \(l_i=1\).
  Moreover, each factor \(SU(l_i+1)\) does not have a fixed point in \(M\).
  Therefore the second statement follows from an iterated application of Corollary 5.6 of \cite{torus}.
\end{proof}

The next theorem is the classification announced in the introduction of this section.

\begin{theorem}
\label{sec:quas-manif-with-2}
Let \(M\) be a \(2n\)-dimensional quasitoric manifold with non-vanishing index \(\varphi^c(M;V,W)\) with \(V\), \(W\) as in Theorem~\ref{sec:twist-dirac-oper-8}. If \(N(M)\geq 3n-4\), then \(M\) is diffeomorphic to one of the manifolds in the following list.

\begin{center}
\begin{tabular}{|c|c|}
  \(N(M)\) & \(M\)\\\hline\hline
  \(3n\) & \(\prod S^2\)\\\hline
  \(3n-1\)& impossible\\\hline
  \(3n-2\)& \(S^2\)-bundle over \(\prod S^2\)\\\hline
  \(3n-3\)& impossible\\\hline
  \(3n-4\)& \(N\)-bundle over \(\prod S^2\) with \(N\) a quasitoric manifold, \(\dim N=4\) \\
\end{tabular}  
\end{center}
\end{theorem}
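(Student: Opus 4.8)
The plan is to take a compact connected Lie group $G$ acting almost effectively on $M$ with $\dim G=N(M)$; by Corollary~\ref{sec:twist-dirac-oper-11} this number lies in $\{3n,3n-1,3n-2,3n-3,3n-4\}$, and for $n\geq3$ such a $G$ is necessarily non-abelian (the four-dimensional case $n=2$ is classical). Since $M$ is quasitoric we have $\chi(M)>0$, hence $T(M)=n$ and $M^T\neq\emptyset$ for a maximal torus $T\subseteq G$, and Corollary~\ref{sec:twist-dirac-oper-5} gives an embedding $G/T\hookrightarrow M$. This imposes three restrictions: $\dim G-\rank G\leq 2n$; $\rank G\leq T(M)=n$; and $\dim G-\rank G$ is even, being the number of roots of $G$. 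Running through the five possible values of $\dim G$, these leave only the alternatives: $\rank G=n$; or $M=G/T$ is homogeneous (the case $\dim G-\rank G=2n$); or the $G$-action has cohomogeneity at most two, with the orbits $G/T$ of codimension two (the case $\dim G-\rank G=2n-2$).

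If $\rank G=n$ I would invoke Theorem~\ref{sec:quas-manif-over}: a covering group of $G$ is $\prod_{i=1}^{k}SU(2)\times T^{l_0}$ with $k+l_0=n$, so $\dim G=2k+n$, and $M$ is a fiber bundle over $\prod S^2$ with fiber a $2l_0$-dimensional torus manifold. Hence $\dim G=3n$ gives $k=n$, $l_0=0$ and $M=\prod S^2$; $\dim G=3n-2$ gives $k=n-1$, $l_0=1$ and $M$ an $S^2$-bundle over $\prod S^2$; and $\dim G=3n-4$ gives $k=n-2$, $l_0=2$ and $M$ an $N$-bundle over $\prod S^2$ with $N$ a $4$-dimensional torus manifold. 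That $N$ is quasitoric I would check by restricting the quasitoric $T^n$-action along the bundle: a subtorus acts on the base, the complementary $2$-torus acts effectively on $N$ with a polygon as orbit space, and $N$ inherits simple connectivity and a degree-two-generated cohomology ring from $M$. The values $3n-1$ and $3n-3$ are incompatible with $\rank G=n$ by the parity constraint; together with the exclusion of the low-rank cases below, this explains the two ``impossible'' rows.

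For $\rank G<n$ I would argue as follows. If the $G$-action is homogeneous or of cohomogeneity one, Theorem~\ref{sec:quas-manif-with-1} applies verbatim: in the homogeneous case $M=\prod S^2$, which forces $\rank G=n$, a contradiction; in the cohomogeneity-one case $M$ is an $S^2$-bundle over $\prod S^2$. The only remaining possibility is cohomogeneity exactly two with $\dim G-\rank G=2n-2$; here the principal isotropy group $S$ has $\dim S=\dim G-(2n-2)$, while Corollary~\ref{sec:twist-dirac-oper-5} forces $S$ into a maximal torus and hence $\dim S\leq\rank G$, so in fact $S=T$ is a maximal torus. One then studies the two-dimensional orbit space $M/G$, all of whose orbits contain $T$, and shows that $M$ is again an $S^2$-bundle over $\prod S^2$ or else fails to be quasitoric. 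In every surviving low-rank case $M$ is thus an $S^2$-bundle over $\prod S^2$, which carries a $\prod_{i=1}^{n-1}SU(2)\times SO(2)$-action of dimension $3n-2$; since $\dim G=N(M)$, this forces $N(M)=3n-2$, so the rows $N(M)=3n-1$ and $N(M)=3n-3$ are indeed empty.

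The point I expect to be hardest is the cohomogeneity-two low-rank case, together with the claim that an $S^2$-bundle over $\prod S^2$ has degree of symmetry exactly $3n-2$: the lower bound from $\prod SU(2)\times SO(2)$ is immediate, but the matching upper bound is delicate and is essentially one of the assertions of the theorem itself. Verifying that the $4$-dimensional torus-manifold fiber occurring when $\dim G=3n-4$ is genuinely quasitoric is a further technical point that needs care.
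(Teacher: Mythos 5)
Your strategy is the same as the paper's: fix $G$ with $\dim G=N(M)$ acting almost effectively, use Corollary~\ref{sec:twist-dirac-oper-3} to get $\dim G-\rank G\leq 2n$, use that the number of roots $\dim G-\rank G$ is even, and then split into the rank-$n$ case (Theorem~\ref{sec:quas-manif-over}) and the low-rank, low-cohomogeneity cases (Theorem~\ref{sec:quas-manif-with-1} and a $G/T$-bundle argument). The case analysis you run over the five values of $\dim G$ coincides with the paper's.

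There are two genuine gaps. First, in the $\rank G=n$, $\dim G=3n-4$ case you assert that the four-dimensional fiber $N$ ``inherits simple connectivity'' from $M$, but the homotopy exact sequence of the bundle $N\to M\to \prod S^2$ reads
\begin{equation*}
\pi_2(M)\longrightarrow\pi_2\Bigl(\prod S^2\Bigr)\longrightarrow\pi_1(N)\longrightarrow\pi_1(M)=0,
\end{equation*}
so $\pi_1(N)$ is a quotient of $\pi_2(\prod S^2)\cong\mathbb{Z}^{n-2}$ and need not vanish. The paper closes this by citing Lemma 5.17 of \cite{torus} for the simple connectivity of the fiber, and then the fact that four-dimensional simply connected torus manifolds are quasitoric (\cite[Section 5]{0216.20202}); your sketch does not supply a substitute.

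Second, your final paragraph asks for something both stronger than needed and false in general. It is not true that every $S^2$-bundle over $\prod S^2$ has $N(M)=3n-2$ --- the trivial bundle is $\prod S^2$ with $N(M)=3n$ --- and the theorem makes no such claim. What is actually needed, and what the paper proves, is: in the low-rank branch ($\rank G=n-1$, $\dim G=3n-3$, so $\dim G-\rank G=2n-2$), if the principal orbit is $G/T$ of codimension two then Corollary~\ref{sec:twist-dirac-oper-5} forces \emph{every} isotropy group to be a conjugate of $T$, so $M$ is a $G/T$-bundle over $S^2$ with finite structure group $N_G(T)/T$, hence $M=S^2\times G/T=\prod S^2$ and $N(M)\geq 3n$; if the principal orbit has codimension one, Theorem~\ref{sec:quas-manif-with-1} gives an $S^2$-bundle over $\prod_{i=1}^{n-1}S^2$ and $N(M)\geq 3n-2$. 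Either way $N(M)>3n-3=\dim G$, contradicting the choice of $G$; the same argument with $\rank G=n-2$ disposes of the low-rank branch for $\dim G=3n-4$. And $\dim G=3n-1$ is excluded with no bundle argument at all: $\rank G=n$ fails parity, $\rank G=n-1$ fails the second clause of Corollary~\ref{sec:twist-dirac-oper-3} (since $n-1<T(M)=n$), and $\rank G\leq n-2$ gives $\dim G-\rank G>2n$.
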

\begin{proof}
The statement about the quasitoric manifolds with \(N(M)=3n\) follows from Corollary~\ref{sec:twist-dirac-oper-11}.

Therefore assume that \(M\) is a \(2n\)-dimensional quasitoric manifold with non-vanishing index \(\varphi^c(M;V,W)\) and \(G\) is a compact connected Lie-group of dimension \(3n-1\), which acts effectively on \(M\).
Let \(T\) be a maximal torus of \(G\).

Because, by Corollary~\ref{sec:twist-dirac-oper-3},
\begin{equation}
  \label{eq:2}
  \dim G -\dim T \leq 2n,
\end{equation}
we have \(\dim T = n-1\) or \(\dim T=n\).
If \(\dim T=n\), then \(\dim G-\dim T\) is odd,  which is impossible.
But \(\dim T=n-1\) is impossible by Corollary~\ref{sec:twist-dirac-oper-3}.

Let \(M\), \(G\), \(T\) as above. But with \(\dim G=3n-2\).
By (\ref{eq:2}), we have \(\dim T=n-2,n-1,n\).
As in the first case one sees that \(\dim T=n-2,n-1\) are impossible.
If \(\dim T=n\), we see with Theorem~\ref{sec:quas-manif-over} that \(M\) is a \(S^2\)-bundle over \(\prod S^2\).

Let \(M\), \(G\), \(T\) as above. But with \(\dim G = 3n-3\).
By (\ref{eq:2}), we have \(\dim T= n-3,n-2,n-1,n\).
As above one sees that \(\dim T = n-3,n-2,n\) are impossible.
Therefore we have \(\dim T=n-1\).
Because \(\chi(M)\neq 0\), there is by Corollary~\ref{sec:twist-dirac-oper-5} an orbit of type \(G/T\) which has dimension \(2n-2\).

Therefore the principal orbit type has dimension \(2n-2\) or \(2n-1\).
In the first case the principal orbit type is \(G/T\) and by Corollary~\ref{sec:twist-dirac-oper-5} there is no exceptional or singular orbit.
Hence, \(M\) is a fiber bundle over a simply connected surface with fiber \(G/T\) and structure group \(N_GT/T\).
Since \(N_GT/T\) is finite, we have \(M=S^2\times G/T\).
Therefore we have \(N(M)\geq 3 + \dim G =3n\).

Now assume that the principal orbit \(G/S\) has codimension one in \(M\).
Then, by Theorem~\ref{sec:quas-manif-with-1}, \(M\) is a \(S^2\)-bundle with structure group a torus over \(\prod_{i=1}^{n-1} S^2\).
Therefore we have \(N(M)\geq 3n-2\).

Now let \(M\), \(G\), \(T\) as above, but with \(\dim G=3n-4\).
By (\ref{eq:2}), we have \(\dim T= n-4,n-3,n-2,n-1,n\).
As above one sees that \(\dim T= n-4,n-3,n-1\) are impossible.
Therefore we have \(\dim T=n-2,n\).

At first assume that \(\dim T=n\).
Then \(M\) is a torus manifold with \(G\)-action.
By Theorem~\ref{sec:quas-manif-over} we have \(G=\prod_{i=1}^k SU(2)\times T^{l_0}\) with \(3n-4 = 3k+ l_0\) and \(n=k+l_0\).
Therefore we have \(l_0=2\) and \(M\) is a fiber bundle with fiber a four-dimensional torus manifold over \(\prod_{i=1}^kS^2\).
By Lemma 5.17 of \cite{torus}, the fiber of this bundle is simply connected.
Therefore it is quasitoric because every four-dimensional simply connected torus manifold is quasitoric \cite[Section 5]{0216.20202}.

Now assume that \(\dim T = n-2\).
Then we have \(\dim G/T=2n-2\).
Therefore the principal orbit type of the \(G\)-action on \(M\) has dimension \(2n-2\) or \(2n-1\).
In both cases one sees as in the case \(\dim G= 3n-3\) that \(N(M)\geq 3n-2\).
\end{proof}

\section{Highly symmetric quasitoric manifolds}
\label{sec:highly}

In this section we show that \(\C P^n\) is the most symmetric quasitoric manifold of dimension \(2n\).
The main result of this section is the following theorem.

\begin{theorem}
\label{sec:highly-symm-quas-7}
  Let \(M\) be a \(2n\)-dimensional quasitoric manifold. Then we have
  \begin{equation*}
    N(M)\leq n^2+2n
  \end{equation*}
with equality only holding for \(M=\C P^n\).
\end{theorem}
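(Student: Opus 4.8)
\emph{Overview.} The plan is to establish the inequality and the uniqueness statement simultaneously, by a transformation-group analysis carried out at a fixed point of a maximal torus and combined with the cohomological rigidity of quasitoric manifolds; as indicated, this argument does not use the index-theoretic material of the preceding sections. First I record that $PU(n+1)$ acts effectively on $\C P^n$ and $\dim PU(n+1)=(n+1)^2-1=n^2+2n$, so that the bound is attained; it then remains to prove $N(M)\le n^2+2n$ for an arbitrary $2n$-dimensional quasitoric manifold $M$, and that equality forces $M=\C P^n$.

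\emph{Reduction and the orbit--isotropy estimate.} Let $G$ be a compact connected Lie-group acting effectively on such an $M$ and let $T\le G$ be a maximal torus. Since $M$ is quasitoric, $\chi(M)$ equals the number of vertices of the orbit polytope, so $\chi(M)>0$; hence $T$ has a fixed point $x\in M^T$, the isotropy group $G_x$ has maximal rank in $G$, and $\rank G=\rank T\le n$ because a torus acting effectively on a manifold with $\chi\ne 0$ cannot have rank exceeding half the dimension. The isotropy representation of $G_x$ on $T_xM\cong\R^{2n}$ is faithful --- the fixed set of its kernel would be open in the connected manifold $M$, forcing that kernel to act trivially --- so $G_x$ embeds in $O(2n)$; and since the orbit $G/G_x$ embeds in $M$,
\begin{equation*}
  \dim G=\dim(G/G_x)+\dim G_x\le 2n+\dim G_x .
\end{equation*}

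\emph{The crucial estimate $\dim G_x\le n^2$.} The heart of the proof is to bound $\dim G_x$. Here $G_x$ is a compact group of rank $\le n$ carrying a faithful $2n$-dimensional orthogonal representation, and I would analyse this representation factor by factor. If a simple factor acts on an invariant summand as $SO(k)$ on $\R^k$ --- more generally, if a factor has orbits on the local model which are spheres or real or complex Stiefel manifolds of positive dimension --- then the $G$-orbit of the corresponding slice equips a neighbourhood of $x$ with an invariant sphere-bundle structure, and I would argue that its presence is incompatible with $M$ being quasitoric: it either introduces non-zero odd-degree rational cohomology into $M$, or prevents $H^*(M;\Q)$ from being generated in degree two. (For the extreme case $G_x\supseteq SO(2n)$ acting standardly, a slice argument forces $M\cong S^{2n}$ or $\R P^{2n}$, which are not quasitoric for $n\ge 2$.) Eliminating all such factors leaves only the case in which $G_x^0$ is conjugate in $O(2n)$ to a product $U(l_1)\times\dots\times U(l_r)$ acting by its standard representation on an orthogonal splitting $\C^{l_1}\oplus\dots\oplus\C^{l_r}$ with $\sum_i l_i=n$; then
\begin{equation*}
  \dim G_x=\textstyle\sum_i l_i^2\le\bigl(\sum_i l_i\bigr)^2=n^2 ,
\end{equation*}
with equality only when $r=1$, i.e. $G_x^0=U(n)$. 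Combined with the previous inequality this gives $\dim G\le n^2+2n$.

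\emph{The equality case.} If $\dim G=n^2+2n$, both estimates are sharp: $\dim(G/G_x)=2n$, so the orbit $G\cdot x$ is open and, being also closed with $M$ connected and nonempty, equals $M$; and $G_x^0=U(n)$, so $\rank G=\rank G_x=n$. Thus $G$ is a compact connected Lie-group of rank $n$, dimension $n^2+2n$, containing $U(n)$ --- in particular containing $SU(n)$ --- as a subgroup of codimension $2n$. A short inspection of the rank-$n$ compact Lie-groups (their universal covers being of the form $\prod SU(l_i+1)\times(\text{other simple factors})\times T^{l_0}$, with the rank-$n$ simple groups being $A_n$ of dimension $n^2+2n$, $B_n,C_n,D_n$ of larger dimension except for small $n$, and finitely many exceptionals) shows that the only possibility of dimension exactly $n^2+2n$ admitting $U(n)$ with the correct codimension is, up to a finite cover, $SU(n+1)$; hence $M=G/G_x\cong SU(n+1)/S(U(n)\times U(1))=\C P^n$.

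\emph{Expected main obstacle.} The rank bound, the orbit--isotropy inequality, and the final Lie-theoretic bookkeeping are routine; the real work --- and the step I expect to be hardest --- is the estimate $\dim G_x\le n^2$. One must show, uniformly in $n$, that the only faithful orthogonal representations of rank-$\le n$ compact groups that can arise as isotropy representations at a torus-fixed point of a quasitoric manifold are the standard representations of products of unitary groups, the point being that every other possibility ($SO(2n)$, partial Stiefel actions, the exceptional groups) comes with sphere-bundle geometry violating the vanishing of $H^{\mathrm{odd}}(M;\Q)$ or the degree-two generation of $H^*(M;\Q)$.
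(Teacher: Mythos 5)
Your overall skeleton is sound --- fix a $T$-fixed point $x$, observe $\dim G = \dim(G/G_x)+\dim G_x \le 2n+\dim G_x$, then bound $\dim G_x$ and settle the equality case --- and it is genuinely different from the paper's proof, which proceeds by a case-division on $n$: for $2n\ge 20$ it invokes the general degree-of-symmetry estimate of Ku--Mann--Sicks--Su, for $n\le 4$ it cites the classification of compact Lie groups acting on torus manifolds with maximal torus rank (\cite{torus}, Remark 2.9) together with an enumeration of low-rank groups, and for $5\le n\le 9$ it combines Proposition B of Ku--Mann--Sicks--Su with a principal-isotropy argument and Vietoris--Begle to manufacture a contradiction from a hypothetical $\text{Spin}(k)$-factor. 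Your approach, if completed, would be more uniform and arguably more illuminating, since it would give a direct structural reason for the bound rather than a dimension-by-dimension verification.

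The problem is the step you yourself flag as the ``real work'': you have not proved that $\dim G_x\le n^2$, and the heuristic you offer is not close to a proof. Concretely: (i) you assert that any simple factor of $G_x$ acting with real orthogonal-group orbits on the slice forces odd rational cohomology or obstructs degree-two generation in $H^*(M;\Q)$, but you give no argument linking the local slice geometry to global cohomology, and in general an invariant sphere-bundle structure near a point says nothing about $H^*(M)$; (ii) you then conclude from the elimination of all ``non-unitary'' cases that $G_x^0$ is a product of unitary groups in standard form, which again is asserted, not derived (for instance $SO(4)$ acting by its vector representation on $\R^4$ contains a locally standard $T^2$ and has dimension $6>4=n^2$; excluding it for a quasitoric $M^4$ requires an actual argument, not a gesture at sphere bundles); (iii) the paper's Lemma~\ref{sec:highly-symm-quas} shows that groups with a large $\text{Spin}(k)$ factor do occur in the analysis and have to be ruled out by a genuine cohomological computation (Vietoris--Begle over an isotropy group acyclic in low degrees), not by a local slice inspection. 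Moreover, even where it exists (the maximal-rank case), a result of the required shape is Remark~2.9 of \cite{torus}, which the paper cites; reproving it from scratch is a substantial project, and your proposal does not supply it. As written, the proposal reduces the theorem to an unproved lemma that is at least as hard as the theorem itself, so there is a genuine gap. The equality case also inherits this gap, since it needs $G_x^0=U(n)$, which rests on the same unproved classification of isotropy representations.
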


The proof of this theorem is subdivided into several lemmas.
We prove it separately in each dimension.
We begin with dimensions \(2n\geq 20\).

\begin{lemma}
\label{sec:highly-symm-quas-5}
  Let \(M\) be a quasitoric manifold of dimension \(2n\geq 20\) with \(M\neq \C P^n\). Then we have \(N(M)\leq n^2+n+1<n^2+2n=N(\C P^n)\).
\end{lemma}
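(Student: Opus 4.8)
The plan is to bound $N(M)$ for a quasitoric manifold $M \neq \mathbb{C}P^n$ of dimension $2n \geq 20$ by exploiting the classification results for torus manifolds together with the index-vanishing machinery of Section~\ref{sec:twisted}. First I would let $G$ be a compact connected Lie-group acting effectively on $M$ with $\dim G = N(M)$, and pass to a maximal torus $T \subset G$. Since $\chi(M) \neq 0$, any effective torus action on $M$ has rank at most $n$, so $\rank G \leq n$. If $\rank G = n$ then $M$ is a torus manifold with $G$-action whose rational cohomology is generated in degree two, and Remark~2.9 of \cite{torus} gives a covering group $\prod_i SU(l_i+1) \times T^{l_0}$; to recover a good bound on $\dim G$ one checks that $M \neq \mathbb{C}P^n$ forces every $SU(l_i+1)$-factor to act without a global fixed point, and an iterated bundle decomposition (Corollary~5.6 of \cite{torus}) then controls $\dim G$ fiber-by-fiber, yielding $\dim G \leq n^2 + n + 1$ with the extremal case $n^2+2n$ only for $\mathbb{C}P^n$ itself.

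The main case is $\rank G \leq n-1$. Here I would use the general estimate that $G/G_x$ embeds in $M$ for any $x$, so $\dim G - \dim G_x \leq 2n$. Since $\rank G \leq n-1$ and $G_x$ has rank at most $\rank G$, writing $\dim G = (\dim G - \dim G_x) + \dim G_x$ one gets a first crude bound $\dim G \leq 2n + \dim G_x$. To make this useful I would bound $\dim G_x$: the isotropy representation at a point of a principal orbit, together with the structure theory of compact groups of bounded rank, limits $\dim G_x$; combined with the constraint $\rank G \leq n-1$ and the fact that $\dim G \leq \alpha_{n-1} \cdot \rank G$-type inequalities for simple factors (as used with Table~\ref{tab:erste} in Section~\ref{sec:van_witten}), one arranges that $\dim G$ cannot reach $n^2 + n + 1$. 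The precise numerology is where the hypothesis $2n \geq 20$ enters: for small $n$ the bound $2n + \dim G_x$ can be competitive with $n^2+2n$, but for $n \geq 10$ the quadratic term $n^2$ dominates and the linear-in-$n$ contributions from a rank-$(n-1)$ group and its isotropy are swamped.

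Concretely, the key steps in order are: (1) reduce to $G$ connected and to considering its maximal torus $T$, recording $\rank G \leq n$ from $\chi(M) \neq 0$; (2) dispose of the case $\rank G = n$ using the torus-manifold classification of \cite{torus}, showing the only way to exceed $n^2+n+1$ is $M = \mathbb{C}P^n$; (3) in the case $\rank G \leq n-1$, combine the embedding $G/G_x \hookrightarrow M$ with a bound on $\dim G_x$ coming from the principal isotropy representation and from $\rank G_x \leq n-1$; (4) check that for $2n \geq 20$ the resulting estimate is strictly below $n^2 + 2n$, in fact at most $n^2 + n + 1$. I expect step~(3) to be the main obstacle: bounding $\dim G_x$ sharply enough requires a careful look at how a compact group of rank $\leq n-1$ can act on the normal slice of a principal orbit (which has dimension at most $2n - \dim G/G_x$), and one must ensure no exotic high-dimensional isotropy subgroup of bounded rank sneaks in. The rank bound $\rank G_x \leq \rank G \leq n-1$ is the essential lever, since for a compact connected group $H$ one has $\dim H \leq (\rank H)^2 + \ldots$ with the dominant term quadratic in the rank, so $\dim G_x$ is $O(n^2)$ but with a strictly smaller leading constant than $n^2$ once the rank deficiency is accounted for, and this gap closes the estimate for $n$ large.
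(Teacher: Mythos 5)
The paper's proof is a direct one-line application of a theorem of Ku, Mann, Sicks and Su: for any compact manifold $M$ not diffeomorphic to $\mathbb{C}P^n$, $\mathbb{R}P^n$, or $S^n$ and any $\alpha$ with $H^\alpha(M;\mathbb{Q})\neq 0$, one has $N(M)\leq \frac{\alpha(\alpha+1)}{2}+\frac{(2n-\alpha)(2n-\alpha+1)}{2}$. Since a quasitoric manifold has non-vanishing rational cohomology in every even degree up to $2n$, taking $\alpha=n$ (when $n$ is even) or $\alpha=n-1$ (when $n$ is odd) gives the bound $n^2+n$ resp. $n^2+n+1$ at once. Your proposal takes a completely different route, never invokes this theorem, and in fact does not give a proof.

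Two concrete problems. First, you propose to use the index-vanishing machinery of Section~\ref{sec:twisted}, but those results require the existence of a non-vanishing twisted index $\varphi^c(M;V,W)$ with $V,W$ as in Theorem~\ref{sec:twist-dirac-oper-8}. That is a genuine hypothesis (satisfied, for instance, when the orbit polytope admits a facet coloring with $n$ colors, via Lemma~\ref{sec:twist-dirac-oper-6}), but the present lemma must hold for every quasitoric manifold of dimension $2n\geq 20$, so Corollaries~\ref{sec:twist-dirac-oper-5}--\ref{sec:twist-dirac-oper-11} are simply not available here. Second, and more seriously, in the main case $\rank G\leq n-1$ your argument does not close. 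You write $\dim G\leq 2n+\dim G_x$ and assert that for $n\geq 10$ ``the quadratic term $n^2$ dominates''; but a compact connected Lie-group of rank $n-1$ can have dimension as large as $(2n-1)(n-1)=2n^2-3n+1$ (realized by $\text{Spin}(2n-1)$ or $Sp(n-1)$), and $2n^2-3n+1>n^2+2n$ already for $n\geq 5$. Likewise the $\alpha_l$-inequality you cite gives only $\dim G\leq \alpha_{n-1}(n-1)$, which is $\sim 2n^2$ for large $n$ and so never beats $n^2+2n$. Ruling out effective actions of such large low-rank groups on $M$ requires real cohomological input about $M$ beyond the rank bound --- this is precisely what the Ku--Mann--Sicks--Su theorem supplies, and what the paper does by hand for small $n$ in Lemmas~\ref{sec:highly-symm-quas}--\ref{sec:highly-symm-quas-6}. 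As you yourself flag, step~(3) is the obstacle; as written it is not just hard, it is missing.
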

\begin{proof}
  It was shown by Ku, Mann, Sicks and Su \cite[Theorem 1, p.141]{0191.54401} that if \(H^\alpha(M;\Q)\neq 0\) and \(M\neq \C P^n\), then
  \begin{equation*}
    N(M)\leq \frac{\alpha(\alpha+1)}{2} + \frac{(2n-\alpha)(2n-\alpha +1)}{2}.
  \end{equation*}
 The statement follows from this result applied in the cases \(\alpha =n\) or \(\alpha=n-1\).
\end{proof}

Now we turn to the low dimensional case \(2n\leq 8\).

\begin{lemma}
\label{sec:highly-symm-quas-4}
Let \(M\) be a quasitoric manifold of dimension \(2n\), \(n\leq 4\), and \(G\) a compact connected Lie-group which acts almost effectively on \(M\).
Then \(\dim G \leq n^2+2n\) and equality only holds for \(M=\C P^n\) and \(\tilde{G}=SU(n+1)\).
\end{lemma}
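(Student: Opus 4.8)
The plan is to prove the lemma separately for each $n\in\{1,2,3,4\}$, after two reductions. Since a quasitoric manifold has $\chi(M)>0$, no torus of dimension $>n$ acts on $M$, so $\rank G\le n$. Passing to the universal cover, write $\tilde{G}=G'\times T^{l_0}$ with $G'$ semisimple and simply connected; then $\tilde{G}$ still acts almost effectively, $\rank G'+l_0\le n$, and $\dim G=\dim G'+l_0$. For $n=1$ the only $2$-dimensional quasitoric manifold is $\C P^1=S^2$, and $N(S^2)=3=1^2+2\cdot 1$ is realized by $SO(3)$, with $\tilde{G}=SU(2)$.

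For $2\le n\le 4$ I split according to $b_2(M)$. If $b_2(M)\ge 2$, then $M$ is not a rational cohomology $\C P^n$; since all even Betti numbers of a quasitoric manifold are positive, $H^n(M;\Q)\ne 0$ for $n$ even and $H^{n-1}(M;\Q)\ne 0$ for $n$ odd, so the Ku--Mann--Sicks--Su bound quoted in the proof of Lemma~\ref{sec:highly-symm-quas-5}, applied with $\alpha=n$ (for $n=2,4$) or $\alpha=n-1$ (for $n=3$), gives $\dim G\le N(M)\le n^2+n+1<n^2+2n$, so equality cannot occur in this case. If $b_2(M)=1$, then the orbit polytope of $M$ has exactly $n+1$ facets, hence is a simplex, so $M\cong\C P^n$ by \cite{0733.52006}; and for a rational cohomology $\C P^n$ the same result of Ku--Mann--Sicks--Su bounds $\dim G\le n^2+2n$, which is the desired inequality.

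It remains to treat equality $\dim G=n^2+2n$ with $M=\C P^n$. Combining $\rank G\le n$ with the dimension count, one lists all compact connected $\tilde{G}=G'\times T^{l_0}$ that are possible. For $n=2$ this already forces $\tilde{G}=SU(3)$. For $n=3$ the only possibilities besides $SU(4)$ are of the form $G_2\times T^1$, and for $n=4$ the only possibilities besides $SU(5)$ are $\text{Spin}(7)\times SU(2)$, $Sp(3)\times SU(2)$ and $G_2\times Sp(2)$. In each exotic case there is a simple factor $H\in\{G_2,\text{Spin}(7),Sp(3)\}$ which, by effectivity, must act nontrivially on $\C P^n$; but the homogeneous spaces $H/K$ of dimension at most $2n$ that could occur as orbits are too restricted --- by their dimension, their Euler characteristic, or their fundamental group --- for such an action to exist (for instance $G_2$ admits no nontrivial smooth action on a closed $6$-manifold other than the transitive one on $S^6$). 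This rules out every exotic candidate and leaves $\tilde{G}=SU(n+1)$.

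I expect the equality case with $M=\C P^n$ to be the main obstacle. Pure dimension-and-rank counting does not exclude the exotic groups --- for example $\dim(G_2\times T^1)=15=3^2+2\cdot 3$ and $\rank(G_2\times T^1)=3$ --- so for each surviving candidate one has to descend to the orbit structure of a hypothetical action on $\C P^n$ and rule it out by a short case analysis of orbit types. This is finite but its length grows with $n$, which is exactly why the small cases $n\le 4$ are handled directly and separately from the asymptotic range $2n\ge 20$.
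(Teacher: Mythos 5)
Your route is genuinely different from the paper's. The paper splits according to $\rank G$: if $\rank G = n$, it invokes the classification of torus manifolds with $G$-action (Remark 2.9 and Corollary 8.9 of \cite{torus}) to conclude $\tilde{G}=\prod SU(l_i+1)\times T^{l_0}$, whence $\dim G\le n^2+2n$ with equality only for $\tilde{G}=SU(n+1)$ and then $M=\C P^n$; if $\rank G\le n-1$, a one-line dimension count (the largest compact group of rank $1,2,3$ has dimension $3,14,21$, all strictly below $4,15,24$) finishes. You instead split by $b_2(M)$ and lean on the Ku--Mann--Sicks--Su bound. That is a legitimately different strategy, but it has a concrete gap at exactly the point you flagged as the obstacle: when $b_2(M)=1$ you correctly deduce $M=\C P^n$, but the Ku--Mann--Sicks--Su theorem as stated (and as cited in Lemma~\ref{sec:highly-symm-quas-5}) expressly requires $M\ne\C P^n$, so it gives you no bound on $N(\C P^n)$ at all. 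Your sentence ``for a rational cohomology $\C P^n$ the same result \dots\ bounds $\dim G\le n^2+2n$'' is not what that theorem says, and without it the inequality $\dim G\le n^2+2n$ for $M=\C P^n$ --- which is the crux of the lemma --- is unproved. You cannot appeal to $N(\C P^n)=n^2+2n$ as ``known'' either, since establishing that upper bound is precisely what the lemma requires in this case.

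Two further cautions. First, the paper applies the Ku--Mann--Sicks--Su estimate only in dimensions $2n\ge 20$ and develops entirely different machinery for $2n\le 18$; if the estimate were usable down to $2n=4$ the whole of Section~\ref{sec:highly} would collapse to a few lines, so you should verify the theorem's hypotheses before using it in the range $n\le 4$. Second, your treatment of the equality case --- listing $G_2\times T^1$, $\text{Spin}(7)\times SU(2)$, $Sp(3)\times SU(2)$, $G_2\times Sp(2)$ and disposing of them by orbit analysis on $\C P^n$ --- is only a sketch. For $G_2\times T^1$ on $\C P^3$ the argument closes quickly (the only positive-dimensional $G_2$-orbit of dimension $\le 6$ is $S^6$, which would have to be all of $\C P^3$), but for the rank-$4$ cases on $\C P^4$ you would need to carry out the orbit-type analysis in detail, and you cannot shortcut via the index-theoretic Corollary~\ref{sec:twist-dirac-oper-7} since $\C P^n$ has no nonvanishing index $\varphi^c(M;V,W)$ in the sense of the paper (indeed $(n+1)!\nmid n+1$). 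The paper's use of the torus-manifold classification sidesteps all of this: once $\rank G=n$, the allowed $\tilde{G}$ are exactly the products of $SU$'s and a torus, so the exotic candidates never arise.
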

\begin{proof}
  Because \(M\) has non-zero Euler-characteristic, we have \(\rank G \leq n\).
  If \(\rank G=n\), it follows from Remark 2.9 of \cite{torus} that \(G\) has a covering group of the form \(\tilde{G}=\prod_{i=1}^k SU(l_i+1)\times T^{l_0}\) with \(\sum_{i=0}^k l_i =n\).
  Therefore we have \(\dim G\leq n^2+2n\) with equality holding if and only if \(\tilde{G}=SU(n+1)\).
  In the latter case it follows from Corollary 8.9 of \cite{torus} that \(M=\C P^n\).

  Now assume that \(\rank G\leq n-1\).
  The highest dimensional Lie-groups of rank \(k\) are as follows
  \begin{center}
    \begin{tabular}{|c|c|c|c|}
      \(k\)&\(G\)&\(\dim G\)&\((k+1)^2+2(k+1)\)\\\hline\hline
      \(1\)&\(\text{Spin}(3)\) & \(3\)& \(4\)\\\hline
      \(2\)&\(G_2\)&\(14\)&\(15\)\\\hline
      \(3\)&\(\text{Spin}(7)\)&\(21\)&\(24\)
    \end{tabular}
  \end{center}
Therefore the statement follows.
\end{proof}

Now we turn to the middle dimensions \(10\leq 2n\leq 18\).
Those \(2n\)-dimensional simply connected manifolds on which compact connected non-abelian Lie-groups of rank \(n\) act were classified in \cite{torus}.
Therefore we first focus on actions of those groups which have a rank which is smaller than \(n\).

As a first step we show that if a high-dimensional Lie-group acts on a quasitoric manifolds of these dimensions, then its simply connected covering group has a big simple factor which is isomorphic to \(\text{Spin}(k)\).

\begin{lemma}
\label{sec:highly-symm-quas}
  Let \(M\) be a manifold of dimension \(2n\), \(5\leq n\leq 9\), and \(G\) a compact connected Lie-group with \(\rank G \leq n-1\) and \(\dim G \geq n^2+2n\) which acts almost effectively on \(M\). Then \(G\) has a covering group of the form
  \begin{equation*}
    \tilde{G}=\text{Spin}(k)\times G'
  \end{equation*}
with \(k=9\) if \(n=5\) and
\begin{equation*}
  k\geq
  \begin{cases}
    11 &\text{if } n=6\\
    12 &\text{if } n=7\\
    13 &\text{if } n=8\\
    15 &\text{if } n=9.
  \end{cases}
\end{equation*}
\end{lemma}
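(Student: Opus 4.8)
The plan is to proceed by a case analysis on the dimension $n \in \{5,6,7,8,9\}$, exploiting the classification of high-dimensional compact connected Lie-groups of a given rank. The key input is that a compact connected Lie-group $G$ of rank $r$ decomposes (up to covering) as a product $\prod_i G_i \times T^{l_0}$ of simple factors and a torus, with $\sum_i \operatorname{rank} G_i + l_0 = r$, and $\dim G = \sum_i \dim G_i$. Among simple groups of a fixed rank $\ell$, the dimension is maximized (for $\ell$ small) by $\operatorname{Spin}(2\ell+1)$ or $Sp(\ell)$ essentially when $\ell$ is large, but the exceptional groups $G_2, F_4, E_6, E_7, E_8$ dominate for $\ell = 2, 4, 6, 7, 8$. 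The whole point is that $\dim G \geq n^2 + 2n$ with $\operatorname{rank} G \leq n-1$ is a severe constraint that forces one simple factor to be ``almost all'' of $G$.

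\textbf{First I would} set up the optimization. Write $\tilde G = \prod_{i=1}^s G_i \times T^{l_0}$ with $G_i$ simple simply connected, $r_i = \operatorname{rank} G_i$, so $\sum r_i + l_0 \leq n-1$, hence $\sum r_i \leq n-1$. I want to show that, under $\sum_i \dim G_i \geq n^2 + 2n$, one of the $G_i$ is forced to be $\operatorname{Spin}(k)$ with $k$ at least the stated bound. The strategy is: first bound $\max_i \dim G_i$ from below. Since $\dim G_i \leq \alpha_{r_i} r_i$ with $\alpha_\ell$ as in Table~\ref{tab:erste}, and the function $\ell \mapsto \alpha_\ell \ell$ is the dimension of the largest simple group of rank $\leq \ell$, I can bound $\sum_i \dim G_i \leq \sum_i \alpha_{r_i} r_i$. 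Using superadditivity-type estimates for $\alpha_\ell \ell$ (the largest simple group of rank $\leq a+b$ has dimension at least that of the product of the largest of ranks $a$ and $b$, but usually strictly more), one concludes that to reach $n^2+2n$ with total rank budget $n-1$, a single simple factor must already have dimension close to $n^2+2n$; the remaining factors contribute comparatively little. Then, knowing the rank $r_{i_0}$ of this dominant factor is at most $n-1$, I inspect the classification of simple Lie-groups of that rank: the only ones whose dimension is large enough are $\operatorname{Spin}(k)$ (or $Sp(\ell)$, which has a common small-rank coincidence with $\operatorname{Spin}$, e.g. $Sp(2) \cong \operatorname{Spin}(5)$, $Sp(1) \cong \operatorname{Spin}(3)$), and comparing $\dim \operatorname{Spin}(k) = \binom{k}{2}$ against $n^2+2n$ minus the small contribution of the other factors yields the lower bound on $k$.

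\textbf{Concretely, for each $n$} I would run the arithmetic. For $n=5$: need $\dim G \geq 35$, $\operatorname{rank} G \leq 4$. A simple group of rank $\leq 4$ has dimension at most $52$ ($F_4$), but $F_4$ has rank $4$ leaving no room, and $\dim F_4 = 52 > 35$; however $F_4$ is not of the form $\operatorname{Spin}(k)$ --- so here I must also rule out $F_4$ (and $\operatorname{Spin}(8)$, $Sp(4)$, etc.) directly, presumably because those cases are handled by a companion lemma or because $F_4$ acting on a $10$-manifold is excluded by dimension of orbits ($\dim F_4/(\text{max torus}) = 48 > 10$, and in fact $F_4$ has no subgroup of small enough codimension other than those giving orbits of dimension $> 10$, or the relevant quasitoric constraint rules it out). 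Checking: the largest proper closed subgroups of $F_4$ are $\operatorname{Spin}(9)$ (dim $36$, codim $16$), so $F_4$ cannot act on a $10$-manifold with the required fixed point structure --- this is where I'd invoke that $M$ has non-zero Euler characteristic so $\operatorname{rank}$ of the principal isotropy constraints bite. After eliminating the exceptional and symplectic possibilities, $\operatorname{Spin}(k)$ with $\binom{k}{2} \geq 35 - (\text{small})$ forces $k = 9$ (since $\binom{8}{2}=28 < 35 \leq \binom{9}{2}=36$, and $\operatorname{Spin}(9)$ has rank $4$). Analogous arithmetic gives $k \geq 11$ for $n=6$ (need $\geq 48$, $\binom{10}{2}=45$, $\binom{11}{2}=55$), $k \geq 12$ for $n=7$ (need $\geq 63$, $\binom{11}{2}=55$, $\binom{12}{2}=66$), $k\geq 13$ for $n=8$ (need $\geq 80$, $\binom{12}{2}=66$, $\binom{13}{2}=78$ --- so one actually needs a bit from the other factors, giving $k\geq 13$ once those are accounted), and $k \geq 15$ for $n=9$ (need $\geq 99$, $\binom{14}{2}=91$, $\binom{15}{2}=105$).

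\textbf{The main obstacle} will be the elimination of the exceptional groups (and $Sp(\ell)$ and the smaller $\operatorname{Spin}$'s) that have dimension large enough to compete: $G_2$, $F_4$, $E_6$, $E_7$, $E_8$ all have dimension exceeding $3\operatorname{rank}$ by a lot, so pure dimension counting does not exclude, say, $E_6$ (dim $78$, rank $6$) in the $n = 7$ or $n=8$ range. Ruling these out requires using the actual geometry --- that $M$ is a $2n$-manifold with $\chi(M) \neq 0$, so any orbit $G/H$ embeds in $M$ with $\dim G/H \leq 2n$, forcing $H$ to have very small codimension in $G$; one then consults the (known) list of maximal-rank or small-codimension subgroups of the exceptional groups and checks none yields $\dim G/H \leq 2n$ except in ways that reduce to the $\operatorname{Spin}$ case. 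I expect this step to consume most of the proof and to be organized as a sub-lemma or a table; the $\operatorname{Spin}(k)$ conclusion itself then drops out by the elementary binomial-coefficient comparison sketched above.
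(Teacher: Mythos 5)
The paper's proof is far shorter than what you sketch, because it leans on a deep structural theorem rather than re-deriving the required structure by hand. Specifically, the paper observes that for a principal orbit $G/H$ one has
\begin{equation*}
  \dim G \;\geq\; n^2+2n \;\geq\; \left(\tfrac{n}{2}+1\right)\dim M \;\geq\; \left(\tfrac{n}{2}+1\right)\dim G/H,
\end{equation*}
and then cites Proposition~B of Ku, Mann, Sicks, Su (\emph{Degree of symmetry of a product manifold}), applied with $r=\tfrac{n}{2}+1-\epsilon\geq \tfrac{14}{4}-\epsilon$. That proposition directly forces $\tilde G$ to be $\text{Spin}(k)\times G'$ with $k\geq n+2$, $SU(k)\times G'$ with $k\geq n+1$, or $Sp(k)\times G'$ with $k\geq n$; the rank hypothesis $\rank G\leq n-1$ immediately kills the $SU$ and $Sp$ cases. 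Only then does the paper do a table inspection --- exactly the binomial-coefficient arithmetic you sketch --- to improve $k\geq n+2$ to the stated sharper bounds.

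Your proposal instead tries to re-derive the ``one dominant $\text{Spin}(k)$ factor'' conclusion from scratch, via a superadditivity heuristic plus a case-by-case elimination of the exceptional groups and $Sp(\ell)$. This is where the gap lies: you explicitly defer the elimination of $G_2,F_4,E_6,E_7,E_8$ and the symplectic factors to ``a sub-lemma or a table'' and say you ``expect this step to consume most of the proof'' --- but you do not carry it out, and it is not a routine computation. (It is, in effect, what Proposition~B of Ku--Mann--Sicks--Su encapsulates.) There is also a small but real misstep: in ruling out $F_4$ you propose to ``invoke that $M$ has non-zero Euler characteristic so rank of the principal isotropy constraints bite,'' but the lemma as stated has no Euler-characteristic hypothesis --- $M$ is merely a $2n$-manifold. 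In fact $\chi(M)\neq 0$ is not needed at that point: the largest proper closed subgroup of $F_4$ is $\text{Spin}(9)$ of codimension $16$, so there is simply no closed subgroup of $F_4$ of codimension $\leq 10$ other than $F_4$ itself, and almost effectiveness fails. The fix for your approach is either to actually complete the case analysis (maximal subgroups of each competing simple group, in each rank range) or, better, to invoke the structural Proposition~B as the paper does, after which your arithmetic is exactly the paper's final table inspection.
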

\begin{proof}
  Let \(G/H\) be a principal orbit type of the \(G\)-action on \(M\).
Then
\begin{equation*}
  \dim G \geq n^2+2n\geq (\frac{n}{2}+1)\dim M \geq(\frac{n}{2}+1)\dim G/H
\end{equation*}
Because \(\frac{n}{2}+1\geq \frac{14}{4}\), we may apply Proposition B of \cite[p. 135]{0191.54401} with \(r=\frac{n}{2}+1-\epsilon\).
Therefore \(\tilde{G}\) is of the form
\begin{align*}
  &\text{Spin}(k)\times G'& k&\geq n+2, \text{ or}\\
  &SU(k)\times G'&k&\geq n+1,\text{ or}\\
  &Sp(k)\times G'&k&\geq n.
\end{align*}
Because \(\rank G \leq n-1\), the last two cases do not occur.

It remains to prove that the lower bound for \(k\) given in the lemma holds.
This follows from an inspection of the dimensions of those groups which have \(\text{Spin}(k)\), \(k\geq n+2\), as a simple factor and rank bounded from above by \(n-1\).
These groups are listed in the following tables.
Here we have omitted those groups which are not isomorphic to \(\text{Spin}(k)\) and for which the \(\text{Spin}(k)\)-factor alone has a dimension greater or equal to \(n^2+2n\).
If the \(\text{Spin}(k)\)-factor has a lower dimension, we have only listed those groups which have maximal dimension among those groups which have this \(\text{Spin}(k)\)-factor.

If \(n=5\) we have
  \begin{center}
    \begin{tabular}{|c|c|c|}
      \(G\)&\(\dim G\)&\(n^2+2n=35\)\\\hline\hline
      \(\text{Spin}(9)\)&\(36\)&\\\hline
      \(\text{Spin}(8)\)&\(28\)&\\\hline
      \(\text{Spin}(3)\times \text{Spin}(7)\)&\(24\)&
    \end{tabular}
  \end{center}

  For \(n=6\) we have
  \begin{center}
    \begin{tabular}{|c|c|c|}
      \(G\)&\(\dim G\)&\(n^2+2n=48\)\\\hline\hline
      \(\text{Spin}(11)\)&\(55\)&\\\hline
      \(\text{Spin}(10)\)&\(45\)&\\\hline
      \(\text{Spin}(3)\times \text{Spin}(9)\)&\(39\)&\\
    \end{tabular}
  \end{center}

For \(n=7\) we have
  \begin{center}
    \begin{tabular}{|c|c|c|}
      \(G\)&\(\dim G\)&\(n^2+2n=63\)\\\hline\hline
      \(\text{Spin}(13)\)&\(78\)&\\\hline
      \(\text{Spin}(12)\)&\(66\)&\\\hline
      \(\text{Spin}(3)\times \text{Spin}(11)\)&\(58\)&\\\hline
      \(G_2\times \text{Spin}(9)\)&\(50\)&
    \end{tabular}
  \end{center}

For \(n=8\) we have
  \begin{center}
    \begin{tabular}{|c|c|c|}
      \(G\)&\(\dim G\)&\(n^2+2n=80\)\\\hline\hline
      \(\text{Spin}(15)\)&\(105\)&\\\hline
      \(\text{Spin}(14)\)&\(91\)&\\\hline
      \(\text{Spin}(3)\times \text{Spin}(13)\)&\(81\)&\\\hline
      \(\text{Spin}(3)\times \text{Spin}(12)\)&\(69\)&\\\hline
      \(G_2\times \text{Spin}(11)\)&\(69\)&
    \end{tabular}
  \end{center}

For \(n=9\) we have
  \begin{center}
    \begin{tabular}{|c|c|c|}
      \(G\)&\(\dim G\)&\(n^2+2n=99\)\\\hline\hline
      \(\text{Spin}(17)\)&\(136\)&\\\hline
      \(\text{Spin}(16)\)&\(120\)&\\\hline
      \(\text{Spin}(15)\)&\(105\)&\\\hline
      \(\text{Spin}(3)\times \text{Spin}(14)\)&\(94\)&\\\hline
      \(G_2\times \text{Spin}(13)\)&\(92\)&\\\hline
      \(\text{Spin}(7)\times\text{Spin}(11)\)&\(69\)&\\
    \end{tabular}
  \end{center}
Therefore the statement about \(k\) follows.
\end{proof}

The next step is to identify the identity component of the principal isotropy group of the \(\text{Spin}(k)\)-action on \(M\).

\begin{lemma}
\label{sec:highly-symm-quas-3}
  Let \(M\), \(G\) as in Lemma~\ref{sec:highly-symm-quas}.
  If \(n=5\), then also assume that \(\chi(M)\neq 0\).
  Then the identity component of the principal isotropy group of the \(\text{Spin}(k)\)-action on \(M\) is \(\text{Spin}(k-1)\).
\end{lemma}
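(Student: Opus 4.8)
The plan is to determine the identity component \(H^{0}\) of a principal isotropy subgroup \(H\) of the \(\text{Spin}(k)\)-action by a dimension count, exploiting the fact that proper connected subgroups of \(\text{Spin}(k)\) of sufficiently large dimension are forced to be \(\text{Spin}(k-1)\).

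First I would set up the elementary input. By Lemma~\ref{sec:highly-symm-quas}, \(\text{Spin}(k)\) occurs as a direct factor of a covering group of \(G\); the \(G\)-action lifts to this covering and restricts to an almost effective action of \(\text{Spin}(k)\) on \(M\). Since \(\text{Spin}(k)\) cannot fix a dense open subset of \(M\) without acting trivially, a principal orbit \(\text{Spin}(k)/H\) is not a point, so \(H\) is a proper subgroup. Because a principal orbit embeds in \(M\),
\[
  \dim H\ \ge\ \binom{k}{2}-2n .
\]
Reading the admissible values of \(k\) off Lemma~\ref{sec:highly-symm-quas} one checks that \(k\ge n+4\) (hence \(k\ge 9\)) in every case, which is exactly the inequality \(2k-4>2n\), i.e.
\[
  \binom{k}{2}-2n\ >\ \binom{k-2}{2}+1 .
\]

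Second, I would combine this with the structure of large connected subgroups of \(\text{Spin}(k)\). For \(k\ge 9\) the largest proper connected subgroup of \(\text{Spin}(k)\) is \(\text{Spin}(k-1)\), of dimension \(\binom{k-1}{2}\); any connected subgroup not conjugate into a \(\text{Spin}(k-1)\) lies inside a maximal subgroup of dimension at most \(\binom{k-2}{2}+1\) (the contenders being the block subgroup fixing a \(2\)-plane, \(U(k/2)\) when \(k\) is even, the tensor-product subgroups, and images of irreducible representations of simple groups, all of strictly smaller dimension); and inside \(\text{Spin}(k-1)\) the only connected subgroup of dimension exceeding \(\binom{k-2}{2}\) is \(\text{Spin}(k-1)\) itself. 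Together these facts show that a connected subgroup of \(\text{Spin}(k)\) of dimension \(>\binom{k-2}{2}+1\) is either all of \(\text{Spin}(k)\) or equals \(\text{Spin}(k-1)\). As \(\dim H^{0}=\dim H>\binom{k-2}{2}+1\) and \(H\neq\text{Spin}(k)\), I would conclude \(H^{0}=\text{Spin}(k-1)\). The case \(n=5\) is the one in which this inequality is tightest (\(26>22\)); there the hypothesis \(\chi(M)\neq 0\) is available and I would bring it in to dispose of the remaining exceptional orbit configurations, pinning the generic orbit down to the \(8\)-sphere \(\text{Spin}(9)/\text{Spin}(8)\).

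The step I expect to require the most care is the subgroup-dimension input — that \(SO(k)\), equivalently \(\text{Spin}(k)\), has no connected subgroup whose dimension lies strictly between \(\binom{k-2}{2}+1\) and \(\binom{k-1}{2}\). Since only the finitely many values \(k\in\{9,11,12,13,14,15,16,17\}\) arise, I would establish this directly, by listing the maximal connected subgroups of each relevant \(SO(k)\) and comparing dimensions, rather than quoting the general classification of maximal subgroups of the orthogonal groups.
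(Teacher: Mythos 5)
Your argument is correct, and for the case $n=5$ it takes a genuinely different route than the paper. For $6\le n\le 9$ the paper simply invokes case~III of the main lemma of Ku--Mann--Sicks--Su, while for $n=5$ it exploits $\chi(M)\neq 0$ to produce a maximal-rank isotropy group, identifies it via Borel--de~Siebenthal as $\text{Spin}(8)$ or $\text{Spin}(9)$, and then descends to the principal isotropy by a local analysis (no nontrivial $2$-dimensional $\text{Spin}(8)$-representation; decomposition of the tangent representation at a $\text{Spin}(9)$-fixed point). Your proof replaces all of this by a single uniform dimension count: $\dim H \ge \binom{k}{2}-2n$, and the fact that for the relevant $k$ the only proper connected subgroup of $\text{Spin}(k)$ of dimension exceeding $\binom{k-2}{2}+1$ is $\text{Spin}(k-1)$. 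This is self-contained and cleaner in two respects: it makes the $6\le n\le 9$ case explicit rather than an external citation, and it shows that the hypothesis $\chi(M)\neq 0$ for $n=5$ is actually unnecessary — once $k=9$ and $\dim H\ge 26>22$ the dimension count alone pins down $H^0=\text{Spin}(8)$, so your closing remark about needing $\chi(M)\neq 0$ to ``dispose of the remaining exceptional orbit configurations'' is superfluous in your own argument (the hypothesis is of course harmless and is automatic for quasitoric $M$ where this lemma is applied). Two small slips worth noting: the inequality you actually need is $k>n+2$, i.e.\ $k\ge n+3$, not $k\ge n+4$, although $k\ge n+4$ does hold for all admissible $k$; and the maximal connected subgroup $SO(2)\times SO(k-2)$ realizes dimension exactly $\binom{k-2}{2}+1$, so the parenthetical ``all of strictly smaller dimension'' should read ``all of dimension at most $\binom{k-2}{2}+1$''. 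Neither affects the conclusion since your bound $\dim H>\binom{k-2}{2}+1$ is strict. The subgroup-dimension verification you flag as needing care is indeed the heart of the matter; it does hold for each of the finitely many $k$ in play, and your plan to check it case by case is the right way to close it.
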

\begin{proof}
  If \(6\leq n\leq 9\), one can argue as in the proof of the main lemma of \cite[p.135]{0191.54401} in case III.

  Therefore assume that \(n=5\).
  Then we have \(k=9\).
Because \(\chi(M)\neq 0\), there is a point \(x\in M\) such that \(\text{Spin}(9)_x\) has maximal rank in \(\text{Spin(9)}\).
By the classification of maximal rank subgroups of \(\text{Spin}(9)\) in \cite{0034.30701} and the dimension assumption, it follows that \(\text{Spin}(9)_x^0=\text{Spin}(8)\) or \(\text{Spin}(9)_x^0=\text{Spin}(9)\).

If \(\text{Spin(9)}_x^0=\text{Spin}(8)\), then the orbit of \(x\) has codimension two in \(M\). Because \(\text{Spin}(8)\) has no non-trivial \(2\)-dimensional representation, it follows that \(\text{Spin}(8)\) is the identity component of a principal isotropy group.

If \(\text{Spin}(9)_x^0=\text{Spin}(9)\), then \(T_xM\) is a \(10\)-dimensional representation of \(\text{Spin}(9)\).
Therefore it is the sum of the standard \(9\)-dimensional representation of \(\text{Spin}(9)\) and the trivial one dimensional representation.
Hence, the statement follows in this case.
\end{proof}

As a consequence of Lemmas~ \ref{sec:highly-symm-quas} and \ref{sec:highly-symm-quas-3} we get the following lemma which implies Theorem~\ref{sec:highly-symm-quas-7} in the remaining dimensions. 

\begin{lemma}
\label{sec:highly-symm-quas-6}
  Let \(M\) be a quasitoric manifold of dimension \(2n\), \(5\leq n\leq 9\), and \(G\) be a compact connected Lie-group which acts almost effectively on \(M\).

Then \(\dim G\leq n^2+2n\) and equality only holds for \(M=\C P^n\) and \(\tilde{G}=SU(n+1)\).
\end{lemma}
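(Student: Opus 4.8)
The plan is to argue by cases on $\rank G$, which is at most $n$ since $\chi(M)>0$ for a quasitoric manifold. If $\rank G=n$ the argument is the one already used for Lemma~\ref{sec:highly-symm-quas-4}: $M$ is a torus manifold with $G$-action whose rational cohomology is generated in degree two, so by Remark~2.9 of \cite{torus} the group $G$ has a covering group $\tilde G=\prod_{i=1}^{k}SU(l_i+1)\times T^{l_0}$ with $\sum_{i=0}^{k}l_i=n$; superadditivity of $x\mapsto x^2+2x$ gives $\dim G\leq n^2+2n$, and equality forces $l_0=0$ and a single factor $SU(n+1)$, whence $M=\C P^n$ by Corollary~8.9 of \cite{torus}.

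The real content is the case $\rank G\leq n-1$, where I want to rule out $\dim G\geq n^2+2n$ altogether. Assume $\dim G\geq n^2+2n$. By Lemma~\ref{sec:highly-symm-quas}, $G$ has a covering group $\text{Spin}(k)\times G'$ with $k\geq 9$, and since $\chi(M)\neq 0$ Lemma~\ref{sec:highly-symm-quas-3} applies: the identity component of a principal isotropy group of the resulting (almost effective) $\text{Spin}(k)$-action on $M$ is $\text{Spin}(k-1)$. The next step is to turn this into a statement about orbits. Because $\text{Spin}(k-1)$ is a maximal connected subgroup of $\text{Spin}(k)$ (for $k\geq 9$) and, up to conjugacy, lies in every isotropy group, each $\text{Spin}(k)$-orbit is either a point or a $(k-1)$-dimensional manifold finitely covered by $S^{k-1}$; hence every orbit $O$ satisfies $H^i(O;\Q)=0$ for $0<i\leq 2$, as $2<k-1$. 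Also $\dim(M/\text{Spin}(k))=d:=2n-(k-1)<2n$.

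I would finish with the Leray spectral sequence of the orbit map $\pi\colon M\to M^{*}:=M/\text{Spin}(k)$, with $E_2^{p,q}=H^p(M^{*};\mathcal{H}^q)$, the Leray sheaf $\mathcal{H}^q$ having stalk $H^q$ of the orbit above the given point (this is where the slice theorem enters). By the orbit analysis $\mathcal{H}^1=\mathcal{H}^2=0$, so $E_2^{0,2}=E_2^{1,1}=0$ and every class in $H^2(M;\Q)$ lies in $F^2H^2(M;\Q)=E_\infty^{2,0}\subseteq H^2(M^{*};\Q)$. By multiplicativity of the Leray filtration a product of $m$ classes of degree two lies in $F^{2m}H^{2m}(M;\Q)=E_\infty^{2m,0}\subseteq H^{2m}(M^{*};\Q)$, which vanishes as soon as $2m>d$. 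Since $d<2n$, every product of $n$ degree-two classes vanishes in $H^{2n}(M;\Q)$; but $M$ quasitoric forces $H^{*}(M;\Q)$ to be generated in degree two with $H^{2n}(M;\Q)\cong\Q\neq 0$, a contradiction. Thus $\dim G<n^2+2n$ when $\rank G\leq n-1$; combining the two cases gives $\dim G\leq n^2+2n$ with equality only if $\rank G=n$, hence $M=\C P^n$ and $\tilde G=SU(n+1)$.

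I expect the orbit-structure step to be the delicate point: one must genuinely check that no orbit type strictly between the point and $S^{k-1}$ can occur --- for which maximality of $\text{Spin}(k-1)$ in $\text{Spin}(k)$ is essential --- and justify that the Leray sheaves have the claimed stalks via the local normal form of the action. An alternative would be to invoke a Ku--Mann--Sicks--Su type classification of such $\text{Spin}(k)$-actions (as in the proof of Lemma~\ref{sec:highly-symm-quas-3}) to identify $M$ explicitly and then verify by hand that it is not generated in degree two; the spectral-sequence route is preferable because it avoids subdividing on $k$ and on whether $M^{\text{Spin}(k)}$ is empty.
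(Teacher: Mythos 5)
Your proposal is correct and follows the paper's outline closely: split on $\rank G=n$ (handled exactly as in Lemma~\ref{sec:highly-symm-quas-4}) versus $\rank G\leq n-1$, and in the latter case use Lemmas~\ref{sec:highly-symm-quas} and~\ref{sec:highly-symm-quas-3} to extract an almost effective $\text{Spin}(k)$-action ($k\geq 9$) whose orbits are $\Q$-acyclic in degrees $0<i<k-1$ and whose orbit space has dimension $\leq 4$. The only genuine divergence is the final step. The paper invokes the Vietoris--Begle mapping theorem to identify $H^6(M;\Q)\cong H^6(M/\text{Spin}(k);\Q)=0$ and contradicts $H^6(M;\Q)\neq 0$; you instead run the multiplicative Leray spectral sequence of the orbit map, observe $H^2(M;\Q)=F^2H^2$, and kill the fundamental class because it is a product of $n$ degree-two classes landing in filtration $2n>\dim(M/\text{Spin}(k))$. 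Both work; your version leans harder on the quasitoric fact that $H^*(M;\Q)$ is generated in degree two, and is slightly more robust in that it only needs orbit acyclicity in degrees $1,2$, whereas the paper's is shorter and treats Vietoris--Begle as a black box. Two small notes: your final remark that Vietoris--Begle would require ``subdividing on $k$'' is off, since the paper's proof does not subdivide on $k$ either; and $E_\infty^{2m,0}$ is a quotient of $H^{2m}(M^*;\Q)$ under the edge map, not literally a subspace, though the vanishing conclusion you need is unaffected.
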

\begin{proof}
  Since \(M\) has non-zero Euler-characteristic, we have \(\rank G\leq n\).
  In the case \(\rank G=n\), one can argue as in the proof of Lemma~\ref{sec:highly-symm-quas-4}.

  Therefore we may assume that \(\rank G\leq n-1\).
  Assume that \(\dim G\geq n^2+2n\).
  By Lemmas  \ref{sec:highly-symm-quas} and \ref{sec:highly-symm-quas-3}, there is an almost effective action of \(\text{Spin}(k)\) on \(M\) such that \(\dim M/\text{Spin}(k)\leq 4\) and all orbits are acyclic over \(\Q\) up to dimension \(7\). 
By the Vietoris-Begle-mapping theorem, it follows that
\begin{equation*}
  0\neq H^6(M;\Q)\cong H^6(M/\text{Spin}(k);\Q)=0.
\end{equation*}
This is a contradiction.
\end{proof}

\appendix

\section{Groups acting on tori}
\label{sec:tori}

In this appendix we prove some of the technical details which are needed in the proof of Lemma~\ref{sec:twist-dirac-oper-4}.

\begin{lemma}
\label{sec:groups-acting-tori}
  Let \(M\) be a free, finitely generated \(\mathbb{Z}\)-module and \(G\) a finite group which acts on \(M\).
Then there is a \(G\)-invariant submodule \(M'\subset M\) such that:
\begin{enumerate}
\item \(M'\cap M^G=\{0\}\)
\item \(\rank M' + \rank M^G = \rank M\)
\end{enumerate}
\end{lemma}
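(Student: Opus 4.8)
The plan is to reduce to rational coefficients, where Maschke's theorem applies, and then descend back to the lattice $M$ by clearing denominators. First I would set $M_\Q = M\otimes_{\mathbb{Z}}\Q$, regarded as a finite-dimensional $\Q[G]$-module, and record that $(M_\Q)^G = M^G\otimes_{\mathbb{Z}}\Q$, so that $\dim_\Q(M_\Q)^G = \rank M^G$.

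Next I would split $M_\Q$ using the averaging idempotent $p = \frac{1}{\#G}\sum_{g\in G}g$, which is $G$-equivariant, satisfies $p^2 = p$, and has image exactly $(M_\Q)^G$. Hence $V := \kernel p$ is a $G$-invariant subspace with $M_\Q = (M_\Q)^G\oplus V$ and $\dim_\Q V = \rank M - \rank M^G$.

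The candidate module is then $M' := M\cap V$, which is automatically a $G$-invariant submodule of $M$ since both $M$ and $V$ are $G$-invariant. Property (1) is immediate: $M'\cap M^G\subseteq V\cap(M^G\otimes_{\mathbb{Z}}\Q) = V\cap(M_\Q)^G = \{0\}$. For property (2) I would show $M'\otimes_{\mathbb{Z}}\Q = V$; the inclusion ``$\subseteq$'' is clear, and conversely any $v\in V\subseteq M_\Q$ satisfies $Nv\in M$ for some nonzero integer $N$, so $Nv\in M\cap V = M'$ and $v\in M'\otimes_{\mathbb{Z}}\Q$. Thus $\rank M' = \dim_\Q V = \rank M - \rank M^G$, which together with (1) yields (2).

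I do not expect any serious obstacle here: the argument is just Maschke's theorem packaged as an averaging projector, together with the observation that intersecting the rational complement with the lattice $M$ does not change the rank. The only point needing a line of justification rather than none is that last rank computation, which is handled by the denominator-clearing argument above.
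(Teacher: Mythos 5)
Your proof is correct. It does, however, take a different route from the paper's. The paper chooses a $G$-invariant positive definite inner product on $M$ (constructible by averaging an arbitrary one over $G$) and sets $M'$ equal to the orthogonal complement of $M^G$ inside $M$; then (1) is immediate from positive definiteness and $G$-invariance of $M'$ from $G$-invariance of the metric, while (2) is stated with the same implicit rank argument you spell out. You instead work with the averaging idempotent $p=\frac{1}{\#G}\sum_{g\in G}g$ on $M_\Q$, take $V=\kernel p$, and set $M'=M\cap V$. The two constructions are closely related --- when the $G$-invariant inner product is present, $p$ is exactly orthogonal projection onto $(M_\Q)^G$, so your $V$ is its orthogonal complement --- but you avoid introducing the metric altogether and phrase everything in terms of the semisimplicity idempotent. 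Your write-up also has the merit of making explicit the one genuinely nontrivial point, namely that $M'\otimes_\Z\Q = V$ via clearing denominators, which the paper's one-line proof leaves to the reader.
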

\begin{proof}
  Choose a positive definite \(G\)-invariant metric on \(M\).
Then the orthogonal complement \(M'\) of \(M^G\) is \(G\)-invariant and \(M'\cap M^G=\{0\}\).
Moreover, we have \(\rank M' + \rank M^G=\rank M\).
\end{proof}

\begin{lemma}
\label{sec:groups-acting-tori-1}
  Let \(G\) be a finite group, which acts by automorphisms on the torus \(T\). Then there are subtori \(T_1,T_2\subset T\) such that:
  \begin{enumerate}
  \item \(T_1\subset T^G\)
  \item \(T_2^G\) is finite
  \item \(\langle T_1,T_2\rangle =T\)
  \end{enumerate}
\end{lemma}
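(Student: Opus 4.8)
The plan is to reduce the statement to linear algebra over $\mathbb{Z}$ by passing to the integral lattice of $T$. Write $\Lambda=\ker(\exp\colon LT\to T)$, a free finitely generated $\mathbb{Z}$-module with $T=(\Lambda\otimes_{\mathbb{Z}}\R)/\Lambda$. The $G$-action by automorphisms of $T$ induces a $\mathbb{Z}$-linear action of $G$ on $\Lambda$, subtori of $T$ correspond to saturated submodules $N\subseteq\Lambda$ via $N\mapsto (N\otimes\R)/N$, and such a subtorus lies in $T^G$ as soon as $G$ fixes $N$ pointwise. So it suffices to produce suitable $G$-invariant saturated submodules of $\Lambda$.

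First I would apply Lemma~\ref{sec:groups-acting-tori} to $\Lambda$, obtaining a $G$-invariant submodule $\Lambda'\subset\Lambda$ with $\Lambda'\cap\Lambda^G=\{0\}$ and $\rank\Lambda'+\rank\Lambda^G=\rank\Lambda$. Next I would record two elementary facts. (i) $\Lambda^G$ is saturated in $\Lambda$: if $m\lambda\in\Lambda^G$ for some $0\ne m\in\mathbb{Z}$, then $m(g\lambda-\lambda)=0$ in the torsion-free module $\Lambda$, so $g\lambda=\lambda$ and $\lambda\in\Lambda^G$; in particular $(\Lambda\otimes\Q)^G=\Lambda^G\otimes\Q$ by the same torsion-freeness argument. (ii) $\Lambda\otimes\Q=(\Lambda^G\otimes\Q)\oplus(\Lambda'\otimes\Q)$ as $\Q[G]$-modules: since the two ranks already add up to $\rank\Lambda$, it is enough to see that the two $\Q$-spans meet only in $0$, and if $0\ne w$ lay in their intersection, then clearing denominators would give $0\ne\lambda\in\Lambda'$ with $g\lambda=\lambda$ in $\Lambda\otimes\Q$, hence $\lambda\in\Lambda^G$ by torsion-freeness, contradicting $\Lambda'\cap\Lambda^G=\{0\}$. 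Consequently $(\Lambda'\otimes\Q)^G=(\Lambda'\otimes\Q)\cap(\Lambda\otimes\Q)^G=0$.

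Now I would set $T_1$ to be the subtorus with integral lattice $\Lambda^G$ (a genuine subtorus by (i)) and $T_2$ to be the subtorus whose Lie algebra is $\Lambda'\otimes\R$, i.e.\ the one with integral lattice the saturation $\overline{\Lambda'}$ of $\Lambda'$ in $\Lambda$. Since $G$ acts trivially on $\Lambda^G$, it acts trivially on $LT_1$ and hence on $T_1$, so $T_1\subset T^G$. The lattice $\overline{\Lambda'}$ is $G$-invariant (the saturation of a $G$-invariant submodule is $G$-invariant) and $\overline{\Lambda'}\otimes\Q=\Lambda'\otimes\Q$, so $(\overline{\Lambda'})^G=0$ by (ii) and therefore $T_2^G$ is finite. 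Finally $\langle T_1,T_2\rangle=T_1T_2$ is the image of $T_1\times T_2\to T$, hence a subtorus, and its Lie algebra is $(\Lambda^G\otimes\R)+(\Lambda'\otimes\R)=\Lambda\otimes\R=LT$ by (ii); a subtorus of full dimension in $T$ is all of $T$.

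The only mildly delicate point is the bookkeeping with saturations: one must check that $T_1$ and $T_2$ are honest subtori and that the rank identity of Lemma~\ref{sec:groups-acting-tori} really upgrades to the $\Q[G]$-module splitting (ii). Once that is in place the three required properties drop out immediately, and everything else is routine.
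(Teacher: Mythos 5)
Your proposal is correct and follows essentially the same route as the paper: pass to the integer lattice in $LT$, invoke Lemma~\ref{sec:groups-acting-tori} to get the $G$-invariant complement, and take $T_1$, $T_2$ to be the corresponding subtori. The paper states this in a few lines and leaves the saturation and rank-to-dimension bookkeeping implicit; your version spells those points out (in particular that $T_2$ should really be taken with lattice the saturation of $M'$, and that $(M')^G=M'\cap M^G=0$ upgrades to $(LT_2)^G=0$ since the fixed subspace is cut out by equations defined over $\Q$), but the underlying argument is identical.
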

\begin{proof}
  The action of \(G\) on \(T\) induces an action of \(G\) on the Lie-algebra \(LT\) of \(T\). 
Let \(M\) be the integer lattice in \(LT\).
Then \(M\) is \(G\)-invariant. 
Let \(M'\) and \(M^G\) as in Lemma~\ref{sec:groups-acting-tori} and \(T_2\) be the subtorus of \(T\) corresponding to \(M'\); \(T_1\) the subtorus of \(T\) corresponding to \(M^G\).

Then we have:
\begin{itemize}
\item \(T_1\subset T^G\) because \(G\) acts trivially on the Lie-algebra of \(T_1\).
\item \(T_2^G\) is finite because of (1) in Lemma~\ref{sec:groups-acting-tori}.
\item \(\langle T_1,T_2\rangle=T\) because of (1) and (2) in Lemma~\ref{sec:groups-acting-tori}.
\end{itemize}
\end{proof}

\bibliography{dirac}{}
\bibliographystyle{amsplain}
\end{document}